  \definecolor{asparagus}{rgb}{0.53, 0.66, 0.42}
\newtheorem{thm}{Theorem}[section]
\newtheorem*{thm*}{Theorem}
\newtheorem{prop}[thm]{Proposition}
\newtheorem*{cor*}{Corollary}
\newtheorem{lem}[thm]{Lemma}
\newtheorem*{lem*}{Lemma}
\theoremstyle{definition}
\newtheorem{defin}[thm]{Definition}
\newtheorem{rmk}[thm]{Remark}
\newcommand{\id}{\mathbbm{1}}
\renewcommand{\div}{{\rm div}_x}
\newcommand{\supp}{\mathrm{supp}}
\newcommand{\Grad}{\nabla_x}
\newcommand{\res}{\mathop{\hbox{\vrule height 7pt width 0.5pt depth 0pt
			\vrule height 0.5pt width 6pt depth 0pt}}\nolimits}
\newcommand{\FF}{\mathbb{F}}
\newcommand{\RR}{\mathbb{R}}
\renewcommand{\SS}{\mathbb{S}}
\newcommand{\NN}{\mathbb{N}}
\newcommand{\cI}{\mathcal{I}}
\newcommand{\cM}{\mathcal{M}}
\newcommand{\cN}{\mathcal{N}}
\newcommand{\cR}{\mathcal{R}}
\newcommand{\mmu}{\boldsymbol{\mu}}
\newcommand{\hh}{\textbf{h}}
\newcommand{\uu}{\textbf{u}}
\newcommand{\vv}{\textbf{v}}
\newcommand{\ff}{\textbf{f}}
\newcommand{\ee}{\textbf{e}}
\newcommand{\R}{\textbf{R}}
\newcommand{\fR}{\mathfrak{R}}
\newcommand{\p}{\textbf{p}}
\newcommand{\vp}{\varphi}
\newcommand{\Ppsi}{\boldsymbol{\Psi}}
\newcommand{\pp}{\boldsymbol{\varphi}}
\newcommand{\pphi}{\boldsymbol{\phi}}
\newcommand{\rr}{\textbf{r}}
\renewcommand{\ss}{\textbf{t}}
\newcommand{\eps}{\varepsilon}
\newcommand{\oomega}{\boldsymbol{\omega}}
\newcommand{\bU}{\mathbf{U}}
\newcommand{\Ma}{\mathrm{Ma}}
\newcommand{\dist}{\mathrm{dist}}
\newcommand{\dx}{\, {\rm d}x}
\newcommand{\dtau}{\, {\rm d}\tau}
\newcommand{\ds}{\, {\rm d}s}
\newcommand{\dt}{\, {\rm d}t}
\newcommand{\bM}{{\rm M}}
\numberwithin{equation}{section}
\title{Brinkman's law as $\Gamma$-limit of compressible low Mach Navier-Stokes equations and application to randomly perforated domains}
\author{
	Peter Bella\footnote{
		Technische Universit\"at Dortmund,
		Fakult\"at f\"ur Mathematik, 
		44227 Dortmund, Germany. E-mail: peter.bella@udo.edu
	}  
	\and
	Friederike Lemming\footnote{Technische Universit\"at Dortmund,
		Fakult\"at f\"ur Mathematik, 
		44227 Dortmund, Germany. E-mail: friederike.lemming@tu-dortmund.de}
	\and
	Roberta Marziani\footnote{Dipartimento di Ingegneria dell'Informazione e  Scienze Matematiche, 53100 Siena, Italy.
		E-mail: roberta.marziani@unisi.it}
	\and
	Florian Oschmann\footnote{Institute of Mathematics, Czech Academy of Sciences, \v Zitn\'a 25, 115 67 Praha 1, Czech Republic. Email: oschmann@math.cas.cz}
}
\begin{document}

\maketitle
\begin{abstract}
We consider the time-dependent compressible Navier-Stokes equations in the low Mach number regime inside a family of domains $(\Omega_\eps)_{\eps > 0}$ in $\RR^3$. Assuming that $\lim_{\eps \to 0} \Omega_\eps = \Omega \subset \RR^3$ in a suitable sense, we show that in the limit
the fluid flow inside $\Omega$ is governed by the incompressible Navier-Stokes-Brinkman equations, provided the latter one admits a strong solution. The abstract convergence result is complemented with a stochastic homogenization result for 
randomly perforated domains in the critical regime.

\end{abstract}

\tableofcontents
\section{Introduction}\label{sec:introduction}


Homogenization of compressible and incompressible Navier-Stokes equations has gained a lot of interest during the last decades. The simplest setting one may consider is  a smoothly bounded domain $\Omega \subset \RR^3$ with small holes (representing a container with tiny obstacles), precisely for $\eps>0$ let $\Omega_\eps$ be defined by
\begin{align}\label{firstDom}
    \Omega_\eps = \Omega \setminus \bigcup_{i \in I_\eps} B_{\eps^\alpha} (\eps z_i), \quad \alpha \geq 1,\quad z_i \in 2 \mathbb{Z}^3, \quad I_\eps = \{ i: \ \overline{B_\eps(\eps z_i)} \subset \Omega \}.
\end{align}

For given $T>0$, we consider in $(0,T) \times \Omega_\eps$ the compressible Navier-Stokes equations in the low Mach number regime $\Ma(\eps) \to 0$:
\begin{equation}\label{NavierSystem}
	\begin{cases}
	\partial_t\rho_\eps +\div(\rho_\eps \uu_\eps)=0 & \text{in } (0,T)\times \Omega_\eps,\\[1em]
\partial_t(\rho_\eps\uu_\eps)+\div(\rho_\eps \uu_\eps\otimes \uu_\eps)+\dfrac{1}{\Ma^2(\eps)}\Grad p(\rho_\eps) = \div\SS(\Grad \uu_\eps) + \rho_\eps \ff & \text{in } (0,T)\times \Omega_\eps,\\[1em]
\SS(\Grad \uu_\eps)=\mu\left( \Grad \uu_\eps+\Grad^T \uu_\eps-\frac{2}{3}\div(\uu_\eps)\id \right)+\eta \div(\uu_\eps)\id& \text{in } (0,T)\times \Omega_\eps,
	\end{cases}
\end{equation}
where $\rho_\eps$ and $\uu_\eps$ are the fluid's density and velocity, respectively, $p$ is the pressure, $\ff$ the force, and $\SS$ denotes the Newtonian viscous stress tensor with constant viscosity coefficients $\mu >0$, $\eta\geq 0$. Further, we impose no-slip boundary conditions
\begin{align}\label{no-slip}
	\uu_\eps=0 \qquad\text{ on }(0,T)\times\partial\Omega_{\eps}.
\end{align}
For the pressure $p$ and the force $\ff$ we assume
\begin{equation}\label{assumptions0}
		\begin{gathered}
	\ff\in L^\infty(0,T; L^\infty(\RR^3;\RR^3)),\quad 
p\in C([0,\infty))\cap C^2((0,\infty)),\quad p'(\rho)>0 \text{ if } \rho > 0, \quad p(0)=0,\\
\exists \gamma \geq 1, \,0<\underline{p}\le\overline{p} <\infty\colon \ \ \underline{p}\leq \liminf_{\rho\to\infty}\frac{p(\rho)}{\rho^\gamma}\leq\limsup_{\rho\to\infty}\frac{p(\rho)}{\rho^\gamma}\leq \bar p  .
		\end{gathered}
	\end{equation}
A prototypical example of such forces is gravity $\ff = - (0, 0, 1)^T$. As for the pressure, one might think of the adiabatic pressure law $p(\rho) = a \rho^\gamma$ for some $a>0$.\\

The flow behavior in the limit $\eps \to 0$ is strongly affected by the parameter $\alpha$, that is, by the size of the holes $B_{\eps^\alpha} (\eps z_i)$ in \eqref{firstDom}. In particular, if $1 \leq \alpha < 3$, then the holes create huge friction on the flow such that the latter eventually stops; a proper rescaling of the fluid's velocity leads to Darcy's law for the limiting system. If instead $\alpha>3$, then the holes do not influence the flow considerably and the limit flow is governed by the same Navier-Stokes equations we started with. Eventually, if $\alpha = 3$, which is the case we are concerned with in the present work, we are in the \emph{critical regime}. Indeed, in the latter, the holes create some friction that in the limit $\eps \to 0$ gives rise to an additional Brinkman term which resembles the so called Stokes drag law. 
Moreover, since we consider the case of a low Mach number ${\rm Ma}(\eps)\to 0$, the fluid shall become \emph{incompressible} in the limit $\eps \to 0$. Gathering all these considerations, we shall expect for $\alpha = 3$ and $\Ma(\eps) \to 0$ a limiting system of the form
\begin{equation}\label{systemstarkeloesung}
	\begin{cases}
	\div\uu=0&\text{ in }(0,T)\times\Omega,\\[1em]
\bar{\rho} (\partial_t\uu + (\uu\cdot\Grad\uu) + \Grad\Pi+\mu \bM\uu=\mu\Delta_x \uu+\bar{\rho} \ff&\text{ in }(0,T)\times\Omega,\\[1em]
\uu=0&\text{ on }(0,T)\times\partial\Omega,
	\end{cases}
\end{equation}
where now $\bar{\rho} > 0$ is constant, $\bM \in \RR_{\rm sym}^{3 \times 3}$ is a given positive definite symmetric matrix representing the Brinkman term, and $\Pi$ the associated pressure (Lagrange multiplier) to the divergence-free condition. \\

In this paper we consider a more general setting, 
namely, we replace the perforated domains in \eqref{firstDom} for $\alpha=3$
 with a
 family of domains $(\Omega_\eps)_{\eps > 0}$ which satisfies mild assumptions 
  and at the same time keeps the relevant features of the critical regime $\alpha=3$. In particular, we will assume $(\Omega_\eps)_{\eps > 0}$  converge in the sense of Mosco to a fixed domain $\Omega$ 
  (see Definition \ref{def:omega_eps}). 
  In a nutshell, our informal main result reads as follows:
\begin{thm}[See Theorem~\ref{mainTheorem}]\label{thm}
Let $\eps>0$ and $\Omega_\eps, \Omega \subset D \subset \RR^3$ be smoothly bounded domains confined to an overall bounded domain $D$. If $\lim_{\eps \to 0} \Omega_\eps = \Omega$ (in the sense of Definition \ref{def:omega_eps}) and $\Ma(\eps)$ with $\lim_{\eps \to 0} \Ma(\eps) = 0$ complies with this convergence of domains, and if \eqref{systemstarkeloesung} exhibits a strong solution $(\bar{\rho}, \uu)$, then a sequence of weak solutions $(\rho_\eps, \uu_\eps)$ to \eqref{NavierSystem} converges to $(\bar{\rho}, \uu)$ as $\eps \to 0$.
\end{thm}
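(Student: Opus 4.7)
The plan is to use the \emph{relative energy method}, which is tailored precisely for situations when the limit system is assumed to admit a strong solution. Introduce the functional
\[
\mathcal{E}_\eps(t)=\int_{\Omega_\eps}\Bigl(\tfrac{1}{2}\rho_\eps|\uu_\eps-\uu|^2+\tfrac{1}{\Ma^2(\eps)}H(\rho_\eps\,|\,\bar\rho)\Bigr)\dx,\qquad H(\rho|\bar\rho):=P(\rho)-P'(\bar\rho)(\rho-\bar\rho)-P(\bar\rho),
\]
where $P$ is the pressure potential defined by $\rho P'(\rho)-P(\rho)=p(\rho)$. First I would collect the standard a priori bounds for finite-energy weak solutions to \eqref{NavierSystem}, which via \eqref{assumptions0} yield uniform-in-$\eps$ control of $\rho_\eps$ in $L^\infty(0,T;L^\gamma)$, of $\sqrt{\rho_\eps}\uu_\eps$ in $L^\infty(0,T;L^2)$, and of $\Grad\uu_\eps$ in $L^2$. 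The no-slip condition \eqref{no-slip} allows extension of $\uu_\eps$ by zero to the ambient domain $D$, so that weak limits can legitimately be compared with $\uu$.

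The heart of the argument is a differential inequality for $\mathcal{E}_\eps$. Following the standard recipe, I combine the energy inequality for $(\rho_\eps,\uu_\eps)$ with the momentum equation tested against a \emph{restricted} version $\cR_\eps\uu$ of the strong solution, together with the continuity equation tested against $\tfrac12|\uu|^2$ and against $P'(\bar\rho)$. The key object here is the restriction operator $\cR_\eps\colon W^{1,q}_0(\Omega;\RR^3)\to W^{1,q}_0(\Omega_\eps;\RR^3)$ provided by Definition~\ref{def:omega_eps}: it is uniformly bounded, satisfies $\cR_\eps\uu\to\uu$ strongly in $L^2$, and produces in the limit the corrector identity
\[
\lim_{\eps\to 0}\int_0^T\!\int_{\Omega_\eps}\SS(\Grad\cR_\eps\uu):\Grad\cR_\eps\uu\,\dx\dt=\int_0^T\!\int_{\Omega}\bigl(\SS(\Grad\uu):\Grad\uu+\mu\bM\uu\cdot\uu\bigr)\dx\dt,
\]
which is the abstract substitute for the classical Allaire--Tartar dipole construction and is responsible for the emergence of the Brinkman term $\mu\bM\uu$ in \eqref{systemstarkeloesung}. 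Since $\cR_\eps\uu$ will generally fail to be divergence-free in $\Omega_\eps$, one has to add a Bogovski\u{\i}-type corrector localized near the ``added'' set $\Omega_\eps\setminus\Omega$ and a symmetric one near $\Omega\setminus\Omega_\eps$, and to show that both are negligible; the geometric data encoded in Definition~\ref{def:omega_eps} are designed precisely to secure the necessary uniform-in-$\eps$ bounds.

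The low Mach contribution is then handled by the usual bookkeeping: the singular term $\Ma^{-2}\Grad p(\rho_\eps)$, paired with $\uu_\eps-\cR_\eps\uu$ and combined with the continuity equation together with $\div\uu=0$, reorganizes into the time derivative of $\Ma^{-2}\int H(\rho_\eps|\bar\rho)\dx$ plus remainders that are either absorbed into $\mathcal{E}_\eps$ by convexity of $H$ and the $L^\infty$-regularity of $(\partial_t\uu,\Grad\uu,\Pi)$ inherited from the strong solution, or shown to vanish as $\eps\to 0$ thanks to the compatibility between $\Ma(\eps)$ and the convergence of $\Omega_\eps$ postulated in Definition~\ref{def:omega_eps}. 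Collecting all terms produces an estimate of the form
\[
\mathcal{E}_\eps(t)\le C\int_0^t\mathcal{E}_\eps(s)\ds+o_\eps(1),
\]
and Gronwall's lemma closes the argument, giving $\mathcal{E}_\eps(t)\to 0$ uniformly on $[0,T]$, which translates into the convergence $(\rho_\eps,\uu_\eps)\to(\bar\rho,\uu)$ claimed in Theorem~\ref{thm}.

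The principal obstacle is the simultaneous treatment of three singular ingredients: the vanishing Mach number, the weak--strong comparison, and the homogenization of the viscous dissipation. Concretely, the difficulty concentrates on the design and analysis of $\cR_\eps$ and its Bogovski\u{\i} correction under only the mild abstract hypotheses of Definition~\ref{def:omega_eps}: the operator must (a) restore the Dirichlet trace on $\partial\Omega_\eps$, (b) respect the divergence-free constraint up to a quantitatively controlled error, and (c) generate exactly the Brinkman matrix $\bM$ in the limit. Once this is in place, the low Mach accounting and the Gronwall closure are essentially standard, and the same abstract theorem can then be fed into the stochastic perforation setting announced in the abstract.
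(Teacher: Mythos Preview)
Your plan follows a genuinely different route from the paper. The paper does \emph{not} run the relative energy inequality at the $\eps$-level comparing $(\rho_\eps,\uu_\eps)$ directly against $(\bar\rho,\uu)$. Instead it proceeds in two decoupled steps: first (Proposition~\ref{prop1}) it passes to the limit in the weak formulation of \eqref{NavierSystem}, using the test functions $\pphi_\eps$ supplied by \ref{M1}, and shows that any limit $(\bar\rho,\uu)$ is a \emph{dissipative} solution of \eqref{systemstarkeloesung} in the sense of Definition~\ref{def:DissSol}, carrying a Reynolds defect $\fR\ge 0$ coming from the failure of strong convergence of $\rho_\eps\uu_\eps\otimes\uu_\eps$. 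Only then (Proposition~\ref{prop2}) does it invoke a relative energy inequality, but entirely at the level of the \emph{limit} incompressible Brinkman system, to prove weak--strong uniqueness and conclude $\fR=0$, $\uu=\hat\uu$.

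What this buys: the paper never has to run the low-Mach bookkeeping simultaneously with the homogenization corrector; the corrector identity \eqref{M1-v}--\eqref{M1-vi} is used once, to identify the Brinkman term when passing to the dissipative limit, and the Gronwall closure is a purely incompressible argument afterwards. Your direct approach is also legitimate in principle, but your description does not match the hypotheses of Definition~\ref{def:omega_eps}: the approximants $\pphi_\eps$ provided by \ref{M1} are \emph{already solenoidal} (see \eqref{M1-i}), so no Bogovski\u{\i} correction is needed and the singular pressure term vanishes identically when tested against them; moreover \ref{M1} is formulated only for $\pphi\in C_c^\infty(\Omega;\RR^3)$, not as a bounded operator on $W^{1,q}_0$, so applying ``$\cR_\eps$'' directly to the strong solution $\uu$ requires an additional density layer that you have not accounted for. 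These are fixable, but they are exactly the complications the paper avoids by factoring through the dissipative-solution concept.
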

We then complement our analysis by applying Theorem \ref{thm} to a specific choice of domains. More precisely, we consider a family of randomly perforated domains and show that, if the size of the holes is of order $\eps^3$, they converge in the sense of Mosco (Theorem \ref{theorem-random-holes}). We stress that the random setting is a generalization of the periodic one given in \eqref{firstDom}. \\

\noindent \textbf{State of the art.}
Theorem \ref{thm} shares similarities with the one proven in \cite{BellaFeireislOschmann2023b}. The main crucial difference is that in our case an additional Brinkman term $\mu \bM\uu$ pops up in the limit. The latter can be traced back to the fact that our reference domains $(\Omega_\eps)_{\eps>0}$ 
are essentially perforated domains with holes of critical size (that is, $\alpha = 3$ in \eqref{firstDom}), whereas the holes in \cite{BellaFeireislOschmann2023b} are much tinier (i.e., $\alpha > 3)$. Such Brinkman term 
appeared for the first time in the homogenization of the Dirichlet problem in \cite{CioranescuMurat1982} for holes of size $\eps^3$. Later, Allaire found in \cite{Allaire1989, Allaire1990a, Allaire1990b} (for all dimensions $d \geq 2$ and periodically distributed holes) that there are three different regimes concerning the size of the holes $\eps^\alpha$, $\alpha\geq 1$, for the homogenization of the incompressible Stokes equations, which are (for $d=3$) precisely the regimes $\alpha \in [1,3)$, $\alpha=3$, and $\alpha>3$ mentioned above. 
The first result of homogenization of \emph{compressible} fluids for a \emph{critically} sized perforation was given by the first and last author in \cite{BellaOschmann2022}. In the latter the authors proved that the Brinkman term also appears for the limit of the stationary Navier-Stokes equation in the low Mach number regime. Our aim in here is to generalize their result in two directions: to the time-dependent setting, and also to more general domains in the sense of Mosco (domain) convergence.\\

Concerning results on stochastic homogenization, that is, randomly perforated domains, we recall the result of   Giunti, H\"ofer, and Vel\'azquez \cite{GiuntiHoeferVelazquez2018}, where the authors proved that the Brinkman term arises from the Poisson equation in a critically randomly perforated domain. Later, Giunti and H\"ofer \cite{GiuntiHoefer2019} derived a similar result for the stationary incompressible Stokes equations. Results for compressible fluids under the presence of randomly distributed, but \emph{tiny} holes, can be found in \cite{BellaOschmann2023}. $\Gamma$-convergence for nonlinear Dirichlet problems on randomly perforated domains was studied in \cite{Scardia-Zemas-Zeppieri}. Similar problems in the context of stochastic homogenization from a variational point of view were treated for example in \cite{BMZ, Marziani, Marziani-Solombrino}.

Besides the already mentioned references given above, there are many works dealing with homogenization of compressible as well as incompressible models, heat-conducting fluids, and transition in various ways. We refer the interested reader to the literature given in the references below.\\

Regarding incompressible models, Tartar gave the first result in \cite{Tartar1980}. He obtained a Darcy law, which was proposed more than 120 years earlier by Darcy \cite{Darcy1856}. Later, Mikeli\'c \cite{Mikelic1991} gave results for the evolutionary system, and obtained a Darcy law with memory effect. An incompressible system with non-constant density was investigated in \cite{BasaricOschmannPan2025, LuPanYang2025, Pan2025}, where the limiting systems are Darcy's law, the unperturbed Navier-Stokes equations, and Brinkman's law, respectively. Non-Newtonian fluids were considered in \cite{LuOschmann2024, LuQian2024}. Moreover, critical perforations in heat conducting fluids were investigated in \cite{FeireislNamlyeyevaNecasova2016}. The case of particles changing size when time continues was recently treated in \cite{WiedemannPeter2025}.\\

The literature for compressible fluids focuses mostly on the case of tiny (subcritical) holes, or the case when the holes' mutual distance is proportional to their size such that rescaling arguments apply. More precisely, Masmoudi considered in his seminal paper \cite{Masmoudi2002} the latter case and obtained a compressible Darcy's law; convergence rates for the same type of problem were found recently in \cite{HoeferNecasovaOschmann2024}. In \cite{HoeferKowalczykSchwarzacher2021} the authors obtained an incompressible Darcy's law, but starting from a compressible system with a low Mach number. The case of tiny holes were investigated in \cite{DieningFeireislLu2017, FeireislLu2015, NecasovaPan2022, OschmannPokorny2023} for different size of holes, values of the adiabatic exponent $\gamma$, and dimensions. Heat conducting compressible fluids were considered in \cite{BasaricChaudhuri2024, LuPokorny2021, PokornySkrisovsky2021} for a deterministic distribution of holes, and in \cite{Oschmann2022} for randomly distributed ones.\\

In addition, it is worth mentioning that also other models of homogenization have been studied such as \cite{KhrabustovskyiPlum2022}, who looked at homogenization from an operator theoretical point of view, and \cite{deAnnaSchlomerkemperZarnescu2025}, who studied nematic liquid crystals.  

\paragraph{Organization of the paper.} The problem under consideration and our main result Theorem~\ref{mainTheorem} can be found in Section~\ref{sec:setting}. The proof of our main result is then carried out in Section~\ref{sec:proofMT}. Finally, Section~\ref{sec:applic} is devoted to the application of Theorem~\ref{mainTheorem} to randomly perforated domains.

\paragraph{Notation.} Throughout the paper, we will make use of the following notations:
\begin{enumerate}[label=$(\alph*)$]
\item $\id$ denotes the identity matrix in $\RR^{3\times 3}$.
\item We let $\chi_E$ be the characteristic function of the set $E$.
\item $a\lesssim b$ denotes $a\le Cb$ for some constant $C>0$ which is independent of $a, b,$ and $\eps$.
    \item We distinguish vector quantities from scalar ones by using bold symbols, i.e., $a$ is scalar whereas $\bf a$ is a vector.
    \item ${\bf a}\otimes \bf b$ and ${\bf a}\cdot \bf b$ denote the tensor product and scalar product between the vectors $\bf a$ and $\bf b$, respectively. $A:B = \sum_{i,j=1}^d A_{ij} B_{ij}$ denotes the scalar (Frobenius inner) product between the matrices $A$ and $B$.
    \item We denote with $\varphi$, $\pp$ respectively $\phi$, $\pphi$ test functions depending on the pair $(t,x)$ and only on $x$, where $t\in (0,T)$ is the time variable and $x\in \RR^3$ the space variable.
    \item $\ee_k$ for $k\in\{1,2,3\}$ denotes the $k$-th element of the canonical basis in $\RR^3$.
    \item Lebesgue and Sobolev spaces will be denoted as usual by $L^p$ and $W^{k,p}$, respectively.
    \item $C_{\rm weak}(0,T;L^2(\Omega;\RR^3))$ denotes the space of functions $\uu(t,\cdot)\in L^2(\Omega;\RR^3)$ for all $t\in[0,T]$ such that 
    \begin{equation*}
t\mapsto\int_\Omega\uu(t,x)\cdot\pphi(x)\dx
    \end{equation*}
    is continuous for all $\pphi\in C^\infty_c(\Omega;\RR^3)$;
    \item We denote by $\div$ and $\nabla_x$ the divergence and gradient in the variable $x$ respectively. We denote with $\partial_t$ the partial derivative with respect to $t$.
    \item $\RR^{3 \times 3}_{\rm sym}$ is the space of $3 \times 3$ symmetric real matrices.
\item $\cM^+(\Omega;\RR^{3 \times 3}_{\rm sym})$ denotes the space of finite matrix valued non-negative measures on $\Omega$ ranging to $\RR_{\rm sym}^{3 \times 3}$.
\item $\mathcal{H}^{2}$ denotes the $2$-dimensional Hausdorff measure, and $\mathcal{H}^{2}\res E$ the $2$-dimensional Hausdorff measure restricted to the set $E$.
\end{enumerate}

\section{Setting of the problem and main result}\label{sec:setting}
In this section we describe the setting of our problem and state our main result. To start, for $\eps>0$ we consider a family of domains $(\Omega_\eps)_{\eps>0}$ confined to a bounded subset $D\subset \RR^3$. We let also $\Omega \subset D$ be a smoothly bounded domain. On $(0,T)\times\Omega_\eps $ (with $T>0$) we consider the system of equations in \eqref{NavierSystem} coupled with no-slip boundary conditions \eqref{no-slip}, whereas on $(0,T)\times\Omega$ we consider the system of equations \eqref{systemstarkeloesung}.

\subsection{Suitable solution concepts}
We will give some concepts of weak, dissipative, and strong solutions to the systems \eqref{NavierSystem} and \eqref{systemstarkeloesung}, respectively, suitable for our purposes.

 \begin{defin}[Finite energy weak solution to \eqref{NavierSystem}]\label{def:FEWS}
 	Let $\gamma\ge1$. We say that $(\rho_\eps,\uu_\eps)$ is a \textit{finite energy weak solution} to system \eqref{NavierSystem} with initial data
 	\begin{align*}
 	\rho_\eps(0,\cdot)=\rho_{0,\eps}  \in L^\gamma(\Omega_\eps), \ \rho_{0, \eps} \geq 0, &&	(\rho_\eps \uu_\eps)(0,\cdot)=(\rho\uu)_{0,\eps} \in L^\frac{2\gamma}{\gamma+1}(\Omega_\eps; \RR^3),
 	\end{align*}
 	if the following hold:
 	\begin{enumerate}[label=(\roman*)]
 		\item \textit{Integrability.} We have
 		\begin{equation}\label{integrability}
        \begin{gathered}
 			\rho_\eps\in L^\infty (0,T;L^\gamma(\Omega_\eps)),\qquad	\rho_\eps\geq 0, \qquad \int_{\Omega_\eps} \rho_\eps(\tau, \cdot) \dx = \int_{\Omega_\eps} \rho_{0, \eps} \dx \ \text{ for almost any } \ \tau \in [0, T],\\
 		\uu_\eps\in L^2(0,T;W_0^{1,2}(\Omega_\eps;\RR^3)), \qquad \rho_\eps\uu_\eps\in L^\infty(0,T;L^{\frac{2\gamma}{\gamma+1}}(\Omega_\eps;\RR^3)).
        \end{gathered}
 		\end{equation}
 		\item \textit{Equation of continuity.} It holds
 		\begin{align}\label{continuity}
 			\int_0^T\int_{\Omega_\eps} \rho_\eps\partial_t \varphi + \rho_\eps\uu_\eps\cdot\Grad\varphi \dx\dt=-\int_{\Omega_\eps}\rho_{0,\eps} \varphi(0,\cdot)\dx,
 		\end{align}
 		for any $\varphi\in C_c^1([0,T)\times {\overline{\Omega}_\eps})$.
 		\item \textit{Momentum equation.} It holds
 		\begin{equation}\label{momentum}
 			\begin{split}
 		\int_0^{\tau}&\int_{\Omega_\eps} \rho_\eps \uu_\eps \cdot \partial_t \pp + \rho_\eps \uu_\eps \otimes \uu_\eps : \Grad \pp + \frac{1}{\Ma^2(\eps)}p(\rho_\eps)\div\pp \dx\dt\\
 	&=\int_0^{\tau}\int_{\Omega_\eps} \SS(\Grad\uu_\eps):\Grad\pp - \rho_\eps \ff \cdot \pp \dx \dt
 	{+\int_{\Omega_\eps} (\rho_\eps\uu_\eps) (\tau,\cdot) \cdot \pp(\tau,\cdot) \dx}
 	-\int_{\Omega_\eps}(\rho\uu)_{0,\eps}\cdot\pp(0,\cdot)\dx,
 			\end{split}
 		\end{equation}
 		for any $\tau\in [0,T]$ and any $\pp\in C^1_c([0,T)\times \Omega_\eps;\RR^3)$.
 		\item \textit{Energy inequality.} For any $\tau \in [0,T]$, there holds
 		\begin{equation}\label{energy}
 			\begin{split}
 				\int_{\Omega_\eps}&\left[ \frac{1}{2}\id_{\{\rho_\eps>0\}}\frac{|\rho_\eps\uu_\eps|^2}{\rho_\eps}+\frac{1}{\Ma^2(\eps)}P(\rho_\eps) \right](\tau,\cdot)\dx+\int_0^\tau\int_{\Omega_\eps}\SS(\Grad\uu_\eps):\Grad\uu_\eps\dx\dt\\
 			&\leq \int_{\Omega_\eps} \frac{1}{2} \id_{\{\rho_{0, \eps}>0\}} \frac{|(\rho\uu)_{0,\eps}|^2}{\rho_{0,\eps}}+\frac{1}{\Ma^2(\eps)}P(\rho_{0,\eps}) \dx+\int_0^\tau \int_{\Omega_\eps}\rho_\eps \ff\cdot \uu_\eps\dx\dt,
 			\end{split}
 		\end{equation}
 		where 
 		\begin{equation}\label{pressPot}
 		P'(\rho)\rho-P(\rho)=p(\rho), \quad P(1)=0.
 		\end{equation}
 	\end{enumerate}
 \end{defin}
 
 \begin{rmk}
     We remark that from \eqref{pressPot}, we may write $P(\rho)$ in compact form as
     \begin{align*}
         P(\rho) = \rho \int_1^\rho \frac{p(s)}{s^2} \ds.
     \end{align*}
 \end{rmk}
 \begin{defin}[Dissipative solution to \eqref{systemstarkeloesung}]\label{def:DissSol}
 	Given $\bar{\rho}>0$, we say that $$\uu\in C_{\rm weak}([0,T];L^2(\Omega;\RR^3))\cap L^2(0,T; W_0^{1,2}(\Omega; \RR^3))$$ is a \emph{dissipative solution} to \eqref{systemstarkeloesung} with initial datum
 	\begin{equation*}
 		\uu(0,\cdot)=\uu_0 \in L^2(\Omega), \quad \div \uu_0 = 0,
 	\end{equation*}
 	if the following hold:
 	\begin{enumerate}
 		\item \textit{Incompressibility.}  
 		\begin{equation}\label{incompressibility-diss}
 			\div\uu=0\quad \text{ a.e. in } (0,T)\times\Omega.
 		\end{equation}
 		\item \textit{Momentum equation.} There exists $\fR\in L^\infty(0,T;\cM^+(\Omega;\RR^{3 \times 3}_{\rm sym}))$ such that
 			\begin{equation}	\label{momentum-diss}
 				\begin{split}
 					& \left[ \int_\Omega\bar{\rho}\uu\cdot\pp \dx \right]_{t=0}^{t=\tau} = \int_0^{\tau}\int_\Omega \bar{\rho}\uu\cdot \partial_t\pp \dx \dt - \mu \int_0^{\tau}\int_\Omega( \bM \uu)\cdot\pp\dx\dt \\
 					&+ \int_0^{\tau}\int_\Omega (\bar{\rho} \uu\otimes\uu):\Grad\pp - \mu\Grad\uu:\Grad\pp + \bar{\rho} \ff\cdot\pp + \fR:\Grad\pp \dx\dt,
 				\end{split}
 			\end{equation}
 				for almost any $\tau\in[0,T]$ and any $\pp\in C^1_c([0,T]\times\Omega;\RR^3)$ with $\div\pp=0$.
 				\item \textit{Energy inequality.}
 				For $\fR$ as above it holds\begin{equation}\label{energy-diss}
 					\begin{split}
 						&\int_\Omega \frac{1}{2}{\bar{\rho}|\uu|^2} (\tau,\cdot)\dx + \int_\Omega \frac12 {\rm trace}[\fR](\tau,\cdot) \dx +\mu\int_0^{\tau}\int_\Omega |\Grad\uu|^2\dx\dt + \mu \int_0^\tau\int_\Omega( \bM\uu)\cdot\uu\dx\dt \\
 						&\leq \int_{\Omega}\frac{1}{2}\bar{\rho} |\uu_0|^2\dx+\int_0^\tau \int_{\Omega}\bar{\rho}\ff\cdot \uu\dx\dt,	\end{split}
 				\end{equation}
 				for almost any $\tau\in [0,T]$.
 			\end{enumerate}
 			We say that $(\bar{\rho},\uu)$ is a dissipative solution  if $\uu$ is a dissipative solution with respect to $\bar{\rho}$.
 		\end{defin}
    \begin{rmk} 
Note that in 
\eqref{momentum-diss}, \eqref{energy-diss} 
and in the following there is a little abuse of notation. Indeed we write $\fR:\nabla_x\pp\dx $ and ${\rm trace}[\fR](\tau,\cdot)\dx$, however in general $\fR$ is not absolutely continuous with respect to Lebesgue measure.
        
    \end{rmk}
 \begin{defin}[Strong solution to \eqref{systemstarkeloesung}]\label{def:StrongSol}
 	Let $\bar{\rho}>0$ be a given constant. We say that $\uu$ is a \textit{strong solution} to \eqref{systemstarkeloesung} with initial datum
 	\begin{equation*}
 		\uu(0,\cdot)=\uu_0\in \bigcap_{1 \leq p < \infty} W^{1,p}(\Omega;\RR^3), \quad \div \uu_0 = 0,
 	\end{equation*}
 provided 
 \begin{equation*}
 	\uu\in C(0,T;W^{2,p}(\Omega; \RR^3))\cap C^1(0,T;L^q(\Omega; \RR^3)) \text{ for all } p,q\in [1,\infty) ,
 \end{equation*}
and $\uu$ satisfies \eqref{systemstarkeloesung} pointwise. We say that the pair $(\bar{\rho},\uu)$ is a strong solution if $\uu$ is a strong solution with respect to $\bar{\rho}$.
 \end{defin}

\begin{defin}[Well-prepared initial data] We say that the initial data $(\rho_{0,\eps}, (\rho \uu)_{0, \eps})$ from Definition~\ref{def:FEWS} are \emph{well-prepared} if there is a constant $\bar{\rho}>0$ and a velocity field
 	\begin{align}\label{u0}
 		\uu_0\in \bigcap_{1 \leq p < \infty} W^{1,p}(\Omega ;\RR^3), && \div\uu_0=0 \ \text{ in } \Omega, && \uu_0=0 \ \text{ on }\partial\Omega,
 	\end{align}
 	
 	 such that
\begin{align}\label{ID1}
\lim_{\eps\to 0}	\frac{1}{\Ma^2(\eps)}\int_{\Omega_\eps}\left[ P(\rho_{0,\eps})-P'(\bar{\rho})(\rho_{0,\eps} -\bar{\rho})-P(\bar{\rho}) \right]\dx= 0,
\end{align}
and 
\begin{equation}\label{ID2}
	(\rho\uu)_{0,\eps}\to \bar{\rho} \uu_0\quad\text{in }L^\frac{2\gamma}{\gamma+1}(\Omega;\RR^3),
\end{equation}
\begin{equation}\label{ID3}
\lim_{\eps\to 0}\int_{\Omega_\eps} \id_{\{\rho_{0,\eps} > 0\}} \frac{ |(\rho \uu)_{0,\eps}|^2}{\rho_{0,\eps}}\dx= \int_\Omega \bar{\rho} |\uu_0|^2\dx.
\end{equation}
 \end{defin}

 \subsection{Admissible domains and the main result}
 To generalize the class of domains given in \eqref{firstDom}, we now introduce the class of admissible domains $(\Omega_{\eps})_{\eps>0}$.
 \begin{defin}[Assumptions on $(\Omega_{\eps})_{\eps>0}$]\label{def:omega_eps}  Let $\eps > 0$ and
$((\Omega_{\eps})_{\eps>0},\Omega,D)$ be a family of bounded domains 
$\Omega_\eps,\Omega\subset D\subset\RR^3$ with $\Omega$ of class $C^{2+\nu}$ for some $\nu>0$.
 \begin{enumerate}[label=(M\arabic*)]
 \item\label{M0} We say that $((\Omega_{\eps})_{\eps>0},\Omega,D)$ satisfies \ref{M0} if for any ${\pphi}\in W_0^{1,2}(D;\RR^3)$ and ${\pphi_\eps}\in W_0^{1,2}(\Omega_\eps;\RR^3)\subset W_0^{1,2}(D;\RR^3)$ such that $ {\pphi_\eps}\rightharpoonup{\pphi}$ weakly in $W_0^{1,2}(D;\RR^3)$, there holds ${\pphi}\in W_0^{1,2}(\Omega;\RR^3)$.
 	\item\label{M1} We say that $((\Omega_{\eps})_{\eps>0},\Omega,D)$ satisfies \ref{M1} if   for any ${\pphi}\in C_c^\infty (\Omega;\RR^3)$ with $\div\pphi=0$ there exists a sequence of solenoidal functions $({\pphi_\eps})_{\eps >0}$ such that
\begin{equation}\label{M1-i}
{\pphi_\eps}\in W_0^{1,2}(\Omega_\eps,\RR^3),\quad \div{\pphi_\eps}=0\quad \text{ in }\Omega_\eps;
\end{equation}
\begin{equation}\label{M1-ii}
{\pphi_\eps}\rightharpoonup{\pphi} \quad\text{ weakly in }W_0^{1,2}(D;\RR^3); 
\end{equation}
\begin{equation}\label{M1-iii}
{\Grad \pphi_\eps} \to {\Grad \pphi} \quad\text{ strongly in }L^\frac32(D;\RR^3);
\end{equation}
\begin{equation}\label{M1-iv}
\limsup_{\eps\to 0} \| \Grad ({\pphi_\eps}-{\pphi})\|_{L^r(D; \RR^3)}\Ma(\eps)^{\frac{2}{\gamma}}=0\quad \text{ with } r:=\frac{3\gamma}{2\gamma - 3}.
\end{equation}
In addition, there exists a matrix $\bM\in \RR_{\rm sym}^{3 \times 3}$  with the following properties:

Let $\vv\in L^2(0,T;W_0^{1,2}(\Omega;\RR^3))$ and let $\vv_\eps\in L^2(0,T;W_0^{1,2}(\Omega_\eps;\RR^3))$ satisfy
\begin{equation}\label{v1}
\vv_\eps\rightharpoonup\vv\quad\text{ weakly in } L^2(0,T;W^{1,2}(D;\RR^3)),
\end{equation}
\begin{equation}\label{v2}
	\div\vv_\eps=\partial_t g_\eps+\div\hh_\eps,
\end{equation}
for some $g_\eps\in L^\infty(0,T;L^\gamma(\Omega_\eps))$, $\hh_\eps\in L^2(0,T;L^{\frac{6\gamma}{\gamma+6}}(\Omega_{\eps};\RR^3))$ with the following properties:   $g_\eps\to\bar g$ in $L^\infty(0,T;L^\gamma(D))$ for some $\bar{g} \in \RR$, and $\|\hh_\eps\|_{L^2(0,T;L^{\frac{6\gamma}{\gamma+6}}(\Omega_\eps;\RR^3))}\lesssim {\rm Ma}(\eps)^{\min\{\frac{2}{\gamma},1\}}$. Assume further that there exist sets $M_\eps\subset (0,T)\times \Omega_\eps$ such that
\begin{align}\label{v3}
    \|g_\eps\chi_{M_\eps}\|_{L^\infty(0,T; L^\infty(D))}&\lesssim 1,\\\label{v4}
    \| g_\eps (1 - \chi_{M_\eps}) \|_{L^\infty(0,T;L^\gamma(D))} &\lesssim {\rm Ma} (\eps)^{\frac{2}{\gamma}}.
\end{align}
Then there holds for a.e. $\tau\in[0,T]$
\begin{align}
\lim_{\eps\to 0}\int_0^\tau\int_{\Omega_\eps}\Grad{\pphi_\eps}:\Grad\vv_\eps\dx\dt &= \int_0^\tau\int_\Omega\Grad {\pphi} :\Grad\vv\dx\dt+\int_0^\tau\int_\Omega (\bM{\pphi})\cdot\vv\dx\dt, \label{M1-v}\\
	 \liminf_{\eps\to 0} \int_0^\tau\int_{\Omega_\eps} |\Grad \vv_\eps |^2\dx\dt &\ge \int_0^\tau\int_{\Omega} |\Grad \vv|^2 \dx\dt+\int_0^\tau\int_\Omega (\bM\vv)\cdot\vv \dx\dt. \label{M1-vi}
\end{align}
 \end{enumerate}
 If $((\Omega_\varepsilon)_{\varepsilon>0},\Omega,D)$ satisfies \ref{M0}, \ref{M1} we say that $\Omega_\varepsilon$ converge to $\Omega$ in the sense of Mosco.\end{defin}

%
In Section \ref{sec:applic} we will consider an example of $((\Omega_\eps)_{\eps>0},\Omega,D)$ where $\Omega_\eps$ is obtained by removing random holes from $\Omega$ and $D=\Omega$, hence we will write $((\Omega_\eps)_{\eps>0},\Omega)$ in place of $((\Omega_\eps)_{\eps>0},\Omega,D)$.
We are now in the position to state our main result.
\begin{thm}\label{mainTheorem}
	Let $((\Omega_\eps)_{\eps>0},\Omega,D)$ be a family of bounded domains as in Definition~\ref{def:omega_eps}. Let $(\rho_\eps,\uu_\eps)_{\eps>0}$ be a family of finite energy weak solutions in the sense of Definition~\ref{def:FEWS} to the Navier-Stokes system \eqref{NavierSystem} under assumptions \eqref{assumptions0} in $(0,T)\times\Omega_{\eps}$, emanating from the well-prepared initial data $(\bar{\rho}, \uu_0)$ satisfying \eqref{u0}--\eqref{ID3} for some $\bar{\rho}>0$. Finally, let 
	\begin{align}\label{assumptions1}
		\gamma >\frac{3}{2}, &&
	\lim_{\eps\to 0}	\Ma(\eps)= 0.
	\end{align}
    Then the following holds.
    \begin{enumerate}[label=$(\alph*)$]
        \item\label{a} Assume $((\Omega_\eps)_{\eps>0},\Omega,D)$ obeys \ref{M0}. Then
	\begin{align*}
		\rho_\eps\to\bar{\rho} &\text{ strongly in }L^\infty(0,T;L^\gamma(D)),\\
		\uu_\eps\rightharpoonup\uu&\text{ weakly in } L^2(0,T;W_0^{1,2}(D,\RR^3)),
	\end{align*}
with $\uu(0,\cdot)=\uu_0$. In particular, $\uu\in L^2(0,T;W_0^{1,2}(\Omega;\RR^3))$. 
\item \label{b} Assume in addition that $((\Omega_\eps)_{\eps>0},\Omega,D)$ satisfies also \ref{M1}. Then $(\bar{\rho},\uu)$ is a dissipative solution to \eqref{systemstarkeloesung} in the sense of Definition~\ref{def:DissSol}.
\item \label{c} Assume that there exists a strong solution to the system \eqref{systemstarkeloesung} in the sense of Definition~\ref{def:StrongSol} with initial datum $\uu_0$. Then the latter coincides to $(\bar{\rho},\uu)$ and $\uu_\eps\to \uu$ strongly in $L^2((0,T)\times D;\RR^3)$.
    \end{enumerate}
\end{thm}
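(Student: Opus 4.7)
The plan is to follow the by-now classical low Mach number framework, combined with the Mosco-type assumptions \ref{M0}--\ref{M1} to handle the perforated geometry. For part \ref{a}, the starting point is the energy inequality \eqref{energy}; together with the right-hand side control via Cauchy--Schwarz, Poincaré and the well-preparedness \eqref{ID1}--\eqref{ID3}, a Gronwall argument yields uniform bounds on $\sqrt{\rho_\eps}\uu_\eps$ in $L^\infty(0,T;L^2)$, on $\uu_\eps$ in $L^2(0,T;W_0^{1,2})$, and on the relative pressure-energy
\begin{equation*}
\Ma^{-2}(\eps)\int_{\Omega_\eps}\bigl[P(\rho_\eps)-P'(\bar{\rho})(\rho_\eps-\bar{\rho})-P(\bar{\rho})\bigr]\dx.
\end{equation*}
Splitting $\rho_\eps-\bar{\rho}$ into an essential part (where $\rho_\eps$ lies in a fixed neighborhood of $\bar{\rho}$) and a residual part, and exploiting the strict convexity of $P$ at $\bar{\rho}$ together with the growth hypothesis \eqref{assumptions0}, one obtains $\rho_\eps\to\bar{\rho}$ strongly in $L^\infty(0,T;L^\gamma(D))$. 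Extending $\uu_\eps$ by zero yields a weak limit $\uu$ in $L^2(0,T;W_0^{1,2}(D;\RR^3))$, and \ref{M0} then forces $\uu(t,\cdot)\in W_0^{1,2}(\Omega)$ for a.e.\ $t$. The initial condition $\uu(0,\cdot)=\uu_0$ is read off from \eqref{ID2}--\eqref{ID3} and the momentum equation.

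For \ref{b}, I pass to the limit in \eqref{momentum} using the test fields $\pphi_\eps$ supplied by \ref{M1} for arbitrary $\pphi\in C_c^\infty(\Omega;\RR^3)$ with $\div\pphi=0$ (multiplied by a smooth time cutoff). Since $\div\pphi_\eps=0$ in $\Omega_\eps$, both the pressure term $\Ma^{-2}(\eps)p(\rho_\eps)\div\pphi_\eps$ and the $\eta\div\uu_\eps\,\id$ contribution to $\SS$ vanish. The remaining viscous term $\int_0^\tau\int_{\Omega_\eps}\Grad\pphi_\eps:\Grad\uu_\eps$ is handled by \eqref{M1-v} applied with $\vv_\eps=\uu_\eps$; verifying hypotheses \eqref{v1}--\eqref{v4} reduces to the continuity equation, which gives the decomposition
\begin{equation*}
\div\uu_\eps = \partial_t\Big(\frac{\bar{\rho}-\rho_\eps}{\bar{\rho}}\Big) + \div\Big(\frac{(\bar{\rho}-\rho_\eps)\uu_\eps}{\bar{\rho}}\Big),
\end{equation*}
so that $g_\eps$ and $\hh_\eps$ are identified, and one applies the essential/residual splitting at scale $\Ma(\eps)^{2/\gamma}$ to obtain the required bounds. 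The convective term produces $\rho_\eps\uu_\eps\otimes\uu_\eps\rightharpoonup\bar{\rho}\uu\otimes\uu+\bar{\rho}\fR$ for a symmetric non-negative defect measure $\fR\in L^\infty(0,T;\cM^+(\Omega;\RR^{3\times 3}_{\rm sym}))$ encoding the possible lack of compactness of $\uu_\eps\otimes\uu_\eps$. Assembling these ingredients yields \eqref{momentum-diss}; the energy inequality \eqref{energy-diss} then follows by passing to $\liminf$ in \eqref{energy}, using \eqref{M1-vi} to extract the Brinkman term $\mu\int(\bM\uu)\cdot\uu$ and Fatou for the $\mathrm{trace}[\fR]$ contribution.

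For \ref{c}, I use a weak--strong uniqueness (relative energy) argument. With $\tilde\uu$ the strong solution from Definition \ref{def:StrongSol}, set
\begin{equation*}
\mathcal{E}(\tau) := \int_\Omega \frac{\bar{\rho}}{2}|\uu-\tilde\uu|^2(\tau)\dx + \frac{1}{2}\int_\Omega\mathrm{trace}[\fR](\tau)\dx.
\end{equation*}
Subtracting the energy identity satisfied by $\tilde\uu$ from \eqref{energy-diss}, testing \eqref{momentum-diss} against $\tilde\uu$ (admissible since $\tilde\uu$ is smooth and solenoidal in $\Omega$), and using the symmetry and positivity of $\bM$ leads, after standard manipulations, to
\begin{equation*}
\mathcal{E}(\tau)\lesssim \int_0^\tau\bigl(1+\|\Grad\tilde\uu\|_{L^\infty}\bigr)\mathcal{E}(s)\ds,\qquad \mathcal{E}(0)=0.
\end{equation*}
Gronwall forces $\uu\equiv\tilde\uu$ and $\fR\equiv 0$; the vanishing of $\fR$ in turn upgrades the weak convergence of $\uu_\eps\otimes\uu_\eps$ to strong convergence of $\uu_\eps$ in $L^2((0,T)\times D;\RR^3)$.

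The main obstacle I anticipate lies in part \ref{b}, in verifying that $\vv_\eps=\uu_\eps$ fits the framework \eqref{v1}--\eqref{v4} demanded by \ref{M1}. The essential/residual decomposition of $\rho_\eps-\bar{\rho}$ must be calibrated at the precise scale $\Ma(\eps)^{2/\gamma}$ appearing in \eqref{v4}, while simultaneously producing the decay $\Ma(\eps)^{\min\{2/\gamma,1\}}$ for $\hh_\eps$ in \eqref{v2}; the latter requires a careful interpolation between the $L^2L^6$ bound on $\uu_\eps$ and the essential/residual bounds on $\rho_\eps-\bar{\rho}$, which is exactly where $\gamma>3/2$ in \eqref{assumptions1} enters. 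Identifying the defect measure $\fR$ and confirming that the test-field error controlled by \eqref{M1-iv} is of lower order in $\Ma(\eps)$ (so as not to contaminate the pressure balance) are the other technical points. The pressure $\Pi$ of the limit system does not appear explicitly in the proof but is recovered a posteriori via de Rham's theorem.
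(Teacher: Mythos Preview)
Your proposal is correct and follows essentially the same route as the paper: uniform energy bounds with essential/residual splitting for part~\ref{a} (which the paper simply cites from \cite{BellaFeireislOschmann2023b}); passing to the limit in the momentum equation with the solenoidal test fields $\pphi_\eps$ from \ref{M1}, invoking \eqref{M1-v} with $\vv_\eps=\uu_\eps$ for the Brinkman term and identifying the defect $\fR$ from the convective term, then \eqref{M1-vi} for the energy inequality in part~\ref{b}; and a relative-energy weak--strong uniqueness argument with Gr\"onwall for part~\ref{c}. The only cosmetic discrepancy is your normalization of $\fR$ (the paper sets $\fR=\overline{\rho\uu\otimes\uu}-\bar{\rho}\uu\otimes\uu$ without the extra $\bar{\rho}$ factor), and the paper verifies \eqref{v1}--\eqref{v4} with $g_\eps=-\rho_\eps/\bar{\rho}$ and $M_\eps=\cM_\eps^{\rm ess}$ exactly as you anticipated.
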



\section{Proof of Theorem \ref{mainTheorem}}\label{sec:proofMT}

In this section we provide the proof of Theorem~\ref{mainTheorem}. For simplicity we divide the proof into two steps contained in Propositions \ref{prop1} and \ref{prop2} below. More precisely, in Proposition \ref{prop1} we show \ref{a} and that finite energy weak solutions to \eqref{NavierSystem} converge to dissipative solutions of \eqref{systemstarkeloesung}, from which we deduce \ref{b}, while in Proposition \ref{prop2} we show the validity of the  weak-strong uniqueness principle which in turn implies \ref{c}.
\begin{prop}[Convergence to dissipative solutions of \eqref{systemstarkeloesung}]\label{prop1}
		Let $((\Omega_\eps)_{\eps>0}, \Omega, D)$ be a family of bounded domains that satisfies \ref{M0}. Let also $(\rho_\eps,\uu_\eps)_{\eps>0}$ be a family of finite energy weak solutions to the Navier-Stokes system \eqref{NavierSystem} in $(0,T)\times\Omega_{\eps}$, which satisfies \eqref{u0}--\eqref{ID3} with \eqref{assumptions0} and \eqref{assumptions1}.
	Then 
	\begin{align}\label{konv1}
		\rho_\eps\to\bar{\rho} &\text{ strongly in }L^\infty(0,T;L^\gamma(D)),\\\label{konv3}
		\uu_\eps\rightharpoonup\uu&\text{ weakly in } L^2(0,T;W_0^{1,2}(D,\RR^3)).
	\end{align}
	If in addition \ref{M1} holds, then $(\bar{\rho},\uu)$ is a dissipative solution to \eqref{systemstarkeloesung}.
\end{prop}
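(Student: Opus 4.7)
The plan is to carry out the proof in three steps: first obtain uniform bounds from the energy inequality and extract the limit $(\bar{\rho},\uu)$; then, under the additional assumption \ref{M1}, pass to the limit in the momentum equation to recover \eqref{momentum-diss} with a Reynolds-type defect measure $\fR$; finally, derive the limiting energy inequality \eqref{energy-diss} and identify the same $\fR$ in both. The last step is where the main difficulty lies.

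\emph{Step 1: uniform bounds and compactness.} Starting from \eqref{energy}, I would use mass conservation (built into \eqref{integrability}) to subtract the linear-in-$\rho_\eps$ contributions and rewrite \eqref{energy} as a relative-energy bound
\begin{equation*}
\int_{\Omega_\eps} \tfrac12 \id_{\{\rho_\eps>0\}} \frac{|\rho_\eps\uu_\eps|^2}{\rho_\eps}(\tau,\cdot)\dx + \frac{1}{\Ma^2(\eps)}\int_{\Omega_\eps} E(\rho_\eps\,|\,\bar{\rho})(\tau,\cdot)\dx + \int_0^\tau\!\!\int_{\Omega_\eps}\SS(\Grad\uu_\eps):\Grad\uu_\eps \dx\dt \le \mathcal{D}_\eps(\tau),
\end{equation*}
with $E(\rho\,|\,\bar{\rho}):=P(\rho)-P'(\bar{\rho})(\rho-\bar{\rho})-P(\bar{\rho})$ enjoying the standard convex coercivity $E(\rho\,|\,\bar{\rho}) \gtrsim |\rho-\bar{\rho}|^2 \id_{\{|\rho-\bar{\rho}|\le 1\}} + |\rho-\bar{\rho}|^\gamma \id_{\{|\rho-\bar{\rho}|>1\}}$. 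Well-preparedness \eqref{ID1}--\eqref{ID3} makes the initial piece of $\mathcal{D}_\eps(\tau)$ converge to $\tfrac12\int_\Omega \bar{\rho}|\uu_0|^2$, while the force term is controlled by Cauchy--Schwarz, the Korn coercivity of $\SS$, and a Grönwall argument. This gives $\uu_\eps$ bounded in $L^2(0,T;W_0^{1,2}(\Omega_\eps;\RR^3))$ and $\|E(\rho_\eps\,|\,\bar{\rho})\|_{L^\infty(0,T;L^1)}\lesssim \Ma^2(\eps)$, yielding \eqref{konv1} via coercivity and, after extending $\uu_\eps$ by zero to $D$, \eqref{konv3}. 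Hypothesis \ref{M0} then places $\uu \in L^2(0,T;W_0^{1,2}(\Omega;\RR^3))$. Passing to the limit in \eqref{continuity} using the strong convergence of $\rho_\eps$ and the weak convergence of $\rho_\eps\uu_\eps\rightharpoonup \bar{\rho}\uu$ yields $\div(\bar{\rho}\uu)=0$, i.e.\ \eqref{incompressibility-diss}.

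\emph{Step 2: momentum equation under \ref{M1}.} For $\pp\in C^1_c([0,T]\times\Omega;\RR^3)$ with $\div\pp=0$, I would test \eqref{momentum} against the corrector $\pp_\eps$ from \ref{M1}. Since $\div\pp_\eps=0$, the pressure term $\Ma^{-2}(\eps)p(\rho_\eps)\div\pp_\eps$ vanishes, the two $\id$-trace contributions in $\SS$ disappear, and the transpose piece $\int \Grad^T\uu_\eps:\Grad\pp_\eps$ integrates by parts to zero, so the viscous term reduces to $\mu\int\Grad\uu_\eps:\Grad\pp_\eps$, to which I apply \eqref{M1-v} with $\vv_\eps:=\uu_\eps$. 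To verify \eqref{v2}--\eqref{v4} I rewrite the continuity equation as
\begin{equation*}
\div\uu_\eps = -\tfrac{1}{\bar{\rho}}\partial_t(\rho_\eps-\bar{\rho}) - \tfrac{1}{\bar{\rho}}\div\bigl((\rho_\eps-\bar{\rho})\uu_\eps\bigr),
\end{equation*}
and set $g_\eps:=-(\rho_\eps-\bar{\rho})/\bar{\rho}$, $\hh_\eps:=-(\rho_\eps-\bar{\rho})\uu_\eps/\bar{\rho}$, $M_\eps:=\{|\rho_\eps-\bar{\rho}|\le 1\}$, $\bar g:=0$; the two-sided coercivity of $E$ together with Sobolev $W_0^{1,2}\hookrightarrow L^6$ then delivers \eqref{v3}--\eqref{v4} and the required $\Ma(\eps)^{\min\{2/\gamma,1\}}$ bound on $\hh_\eps$. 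Symmetry of $\bM$ converts the resulting $\mu\int(\bM\pp)\cdot\uu$ into the Brinkman term $\mu\int(\bM\uu)\cdot\pp$ of \eqref{momentum-diss}. For the convective term I would use weak-$\ast$ compactness of $\rho_\eps\uu_\eps\otimes\uu_\eps$ in a matrix-valued measure space to define
\begin{equation*}
\rho_\eps\uu_\eps\otimes\uu_\eps \rightharpoonup^\ast \bar{\rho}\uu\otimes\uu + \fR \quad\text{in } L^\infty\bigl(0,T;\cM^+(\Omega;\RR^{3\times 3}_{\rm sym})\bigr),
\end{equation*}
with nonnegative-definiteness of $\fR$ coming from a Young-measure/Jensen argument; the time-derivative and force terms pass by strong $L^\infty L^\gamma$ convergence of $\rho_\eps$ and weak convergence of $\rho_\eps\uu_\eps$, establishing \eqref{momentum-diss}.

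\emph{Step 3: energy inequality and main obstacle.} For \eqref{energy-diss} I would take $\liminf$ in \eqref{energy}. Well-preparedness makes the data term converge; the force term passes as above; and the viscous dissipation is lower semicontinuous \emph{with the Brinkman correction} via \eqref{M1-vi}, producing $\mu\int|\Grad\uu|^2 + \mu\int(\bM\uu)\cdot\uu$. The residual kinetic-energy defect $\tfrac12\liminf\int\id_{\{\rho_\eps>0\}}\rho_\eps^{-1}|\rho_\eps\uu_\eps|^2 - \tfrac12\int\bar{\rho}|\uu|^2$ is identified with $\tfrac12\int{\rm trace}[\fR]$, so that the same measure $\fR$ fits both \eqref{momentum-diss} and \eqref{energy-diss}. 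The main obstacle is precisely this simultaneous identification: one must show that the concentration defect arising from $\uu_\eps\otimes\uu_\eps$ in the momentum balance and the kinetic-energy defect in the dissipation balance coincide as the same nonnegative matrix-valued measure, which is the classical obstruction in constructing dissipative weak solutions and requires a Young-measure/concentration-compactness analysis combined with the precise $\Ma(\eps)$-scaling encoded in \eqref{M1-iv} and \eqref{v3}--\eqref{v4}.
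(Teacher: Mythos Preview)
Your overall three-step plan matches the paper's, but you have misidentified where the difficulty lies. The ``simultaneous identification'' of the kinetic-energy defect with $\tfrac12\,{\rm trace}[\fR]$ is \emph{not} an obstacle: since $\rho_\eps|\uu_\eps|^2 = {\rm trace}[\rho_\eps\uu_\eps\otimes\uu_\eps]$ and trace is linear, the weak limit of the former is automatically the trace of the weak limit of the latter along the same subsequence. No Young-measure or concentration-compactness machinery is required here; it is a one-line observation once $\fR$ has been defined in Step~2.

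The genuine gap is in your Step~2 treatment of the convective term. You propose to pass to the limit in $\int\rho_\eps\uu_\eps\otimes\uu_\eps:\Grad\pp_\eps$ via weak-$\ast$ compactness in $L^\infty(0,T;\cM^+)$, but $\Grad\pp_\eps$ depends on $\eps$ and is not a fixed $C_0$ test function; weak-$\ast$ convergence of measures against an $\eps$-dependent test sequence does not produce a limit. The paper resolves this by first taking the tensor-product ansatz $\pp=\psi(t)\pphi(x)$ (which you also skip---\ref{M1} only furnishes spatial correctors $\pphi_\eps$) and then splitting $\pphi_\eps=\pphi+(\pphi_\eps-\pphi)$. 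The piece against the fixed $\Grad\pphi$ passes to the limit via weak convergence of $\rho_\eps^{\rm ess}\uu_\eps\otimes\uu_\eps$ in $L^{4/3}(0,T;L^2)$, the residual part vanishing by $\|\rho_\eps^{\rm res}\|_{L^\infty_t L^\gamma_x}\lesssim\Ma(\eps)^{2/\gamma}$. The remainder $\int\rho_\eps\uu_\eps\otimes\uu_\eps:\psi\,\Grad(\pphi_\eps-\pphi)$ is shown to vanish by combining the essential/residual decomposition of $\rho_\eps$ with \eqref{M1-iii} (essential part: $\|\uu_\eps\|_{L^2_tL^6_x}^2\,\|\Grad(\pphi_\eps-\pphi)\|_{L^{3/2}}\to 0$) and \eqref{M1-iv} (residual part: $\Ma(\eps)^{2/\gamma}\,\|\Grad(\pphi_\eps-\pphi)\|_{L^r}\to 0$). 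This is precisely the role of \eqref{M1-iii}--\eqref{M1-iv}; you invoke \eqref{M1-iv} only in Step~3 and attribute it to the wrong difficulty.
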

\begin{proof}
	The validity of \eqref{konv1} and \eqref{konv3} has already been proven in \cite[Theorem 1.3]{BellaFeireislOschmann2023b}. Note that their assumption \ref{M0} is actually exactly the same as ours such that their proof remains valid. Additionally, the energy inequality enforces (see \cite[Section~2]{BellaFeireislOschmann2023b})
    \begin{align}\label{dvojka}
        \|\sqrt{\rho_\eps} \uu_\eps\|_{L^\infty(0,T; L^2(D; \RR^3))} \lesssim 1,
    \end{align}
    as well as
    \begin{align}\label{petka}
        \sup_{\tau \in [0, T]} \int_D \frac{1}{\Ma^2(\eps)} (P(\rho_\eps) - P'(\bar{\rho})(\rho_\eps - \bar{\rho}) - P(\bar{\rho}) ) \dx \lesssim 1.
    \end{align}
Assume also \ref{M1} holds.    Thus, our task is to show that $(\bar{\rho}, \uu)$ is a dissipative solution to \eqref{systemstarkeloesung}. 
	
\item\paragraph{Step 1: Incompressibility.}

 Let $\varphi\in C_c^1([0,T)\times\overline{\Omega})$. 
Using \eqref{continuity}, we can show as in \cite[Proposition~3.3]{LuSchwarzacher2018} that 
\begin{align}\label{cont2}
 			\int_0^T\int_{D} \rho_\eps\partial_t\varphi+\rho_\eps\uu_\eps\cdot\Grad\varphi \dx\dt=-\int_{D}\rho_{0,\eps}\varphi(0,\cdot)\dx,
\end{align}
where we extended $\rho_\eps$ and $\uu_\eps$ by $0$ to the whole of $D$.

With \eqref{konv1}, \eqref{konv3}, and \eqref{dvojka} we get
\begin{align}\label{trojka}
    \rho_\eps\uu_\eps\xrightharpoonup{\ast} \bar{\rho}\uu ~\text{weakly}^\ast \text{ in }L^\infty (0,T;L^{\frac{2\gamma}{\gamma+1}}(D;\RR^3)).
\end{align}
Indeed, for $\pphi \in L^1(0,T; L^\frac{2\gamma}{\gamma-1}(D; \RR^3))$,
\begin{align*}
    \int_0^T \int_D (\rho_\eps \uu_\eps^\delta - \bar{\rho} \uu) \cdot \pphi \dx \dt &= \int_0^T \int_D ( \sqrt{\rho_\eps} - \sqrt{\bar{\rho}}) \sqrt{\rho_\eps} \uu_\eps^\delta \cdot \pphi \dx \dt + \int_0^T \int_D \sqrt{\bar{\rho}} (\sqrt{\rho_\eps} - \sqrt{\bar{\rho}} ) \uu_\eps^\delta \cdot \pphi \dx \dt \\
    &\quad + \int_0^T \int_D \bar{\rho} (\uu_\eps^\delta - \uu) \cdot \pphi \dx \dt,
\end{align*}
where the superscript $\delta$ denotes a convolution in time only to ensure $\uu_\eps^\delta \in L^\infty(- \delta, T+\delta; W_0^{1,2}(D; \RR^3))$ such that all integrals above are well defined. Due to the strong convergence $\rho_\eps \to \bar{\rho}$ in $L^\infty(0,T; L^\gamma(D))$, the weak convergence $\uu_\eps \rightharpoonup \uu$ in $L^2(0,T; W_0^{1,2}(D; \RR^3))$ that ensures as well $\uu_\eps^\delta \rightharpoonup \uu^\delta$ weakly in $W_0^{1,2}(D; \RR^3)$ uniformly in time, and the bound \eqref{dvojka}, we infer
\begin{align*}
    \lim_{\delta \to 0} \lim_{\eps \to 0} \int_0^T \int_D (\rho_\eps \uu_\eps^\delta - \bar{\rho} \uu) \cdot \pphi \dx \dt = 0.
\end{align*}

Using \eqref{trojka} and \eqref{konv1}, we have for $\eps\to 0$ in (\ref{cont2}) 
\begin{align*}
    0&=\int_0^T\int_D \bar{\rho} \partial_t\varphi \dx\dt+\int_0^T\int_D \bar{\rho} \uu\cdot \Grad\varphi
     \dx\dt + \int_D \bar{\rho} \varphi(0,\cdot) \dx.
\end{align*}
 Now, we conclude with partial integration and $\bar{\rho}$ being a positive constant that
\begin{align*}
    0&=-\int_0^T\int_D ( \partial_t\bar{\rho})\varphi \dx\dt-\int_D \bar{\rho} \varphi(0,\cdot) \dx+\int_0^T\int_D\bar{\rho} \uu\cdot \Grad\varphi \dx\dt+\int_D \bar{\rho} \varphi(0,\cdot) \dx \\
    &=\int_0^T\int_D\bar{\rho} \uu\cdot \Grad\varphi \dx\dt,
\end{align*}	
showing $\div \uu = 0$.\\
	
\item\paragraph{Step 2: Momentum Equation.} 

We consider a specific ansatz for the test functions, namely
\begin{align*}
    \psi(t)\pphi(x), \quad \psi\in C_c^1([0,T)), \quad \pphi\in C_c^1(\Omega;\RR^3), \quad \div(\pphi)=0.
\end{align*}
Due to \ref{M1} there exists a sequence of solenoidal functions $(\pphi_\eps)_{\eps>0}$ approximating $\pphi$ such that \eqref{M1-i}--\eqref{M1-vi} hold. We  use that $\pphi_\eps\psi\in C_c^1([0,T)\times \Omega_\eps;\RR^3)$ is a good test function to system \eqref{NavierSystem} to get
\begin{align*}
    0 &= \int_0^\tau\int_{\Omega_\eps}\rho_\eps \uu_\eps\cdot \partial_t (\pphi_\eps \psi) \dx\dt + 
    \int_0^\tau\int_{\Omega_\eps}(\rho_\eps \uu_\eps\otimes \uu_\eps):\Grad (\pphi_\eps \psi) \dx\dt\nonumber\\
    &\qquad -\int_0^\tau\int_{\Omega_\eps}\SS(\Grad  \uu_\eps):\Grad (\pphi_\eps \psi) \dx \dt +\int_0^\tau\int_{\Omega_\eps}\rho_\eps \ff \cdot (\pphi_\eps \psi) \dx\dt\nonumber\\
    &\qquad +\int_{\Omega_\eps}(\rho \uu)_{0,\eps}\cdot (\pphi_\eps \psi)(0, \cdot) \dx -\int_{\Omega_\eps} (\rho_\eps \uu_\eps)(\tau,\cdot)\cdot (\pphi_\eps \psi)(\tau, \cdot) \dx\,.
\end{align*}
Now using that $\pphi_\eps=(\pphi_\eps-\pphi)+\pphi$ the above identity can be rewritten as
\begin{align}\label{basisGL}
   &\int_0^\tau\int_{\Omega_\eps}\rho_\eps  \uu_\eps\cdot\partial_t(\pphi\psi)
    +(\rho_\eps \uu_\eps\otimes  \uu_\eps):\Grad(\pphi\psi)
    -\SS(\Grad  \uu_\eps):\Grad(\pphi\psi)+\rho_\eps \ff\cdot(\pphi\psi)  \dx\dt\nonumber\\
    &\qquad +\int_{\Omega_\eps}(\rho \uu)_{0,\eps}\cdot \pphi \psi(0) \dx -\int_{\Omega_\eps}(\rho_\eps \uu_\eps)(\tau,\cdot)\cdot \pphi \psi(\tau) \dx\nonumber\\
    &=\int_0^\tau\int_{\Omega_\eps}\rho_\eps \uu_\eps\cdot (\pphi-\pphi_\eps) \partial_t \psi \dx\dt + 
    \int_0^\tau\int_{\Omega_\eps}(\rho_\eps \uu_\eps\otimes \uu_\eps):\Grad(\pphi-\pphi_\eps) \psi \dx\dt\nonumber\\
    &\qquad -\int_0^\tau\int_{\Omega_\eps}\SS(\Grad  \uu_\eps):\Grad (\pphi-\pphi_\eps)\psi \dx \dt +\int_0^\tau\int_{\Omega_\eps}\rho_\eps \ff \cdot (\pphi-\pphi_\eps)\psi  \dx\dt\nonumber\\
    &\qquad +\int_{\Omega_\eps}(\rho \uu)_{0,\eps}\cdot (\pphi-\pphi_\eps)\psi(0) \dx -\int_{\Omega_\eps} (\rho_\eps \uu_\eps)(\tau,\cdot)\cdot (\pphi-\pphi_\eps) \psi(\tau) \dx
    =:\sum_{i=1}^6I_i.\nonumber
\end{align} 
 
	
 
 We now show that $I_i \to 0$ as $\eps \to 0$ for all $i$ except $I_3$ which will eventually yield the Brinkman term. We start by estimating $I_1$.
This term can be bounded using the Sobolev embedding and H\"older's inequality as
		\begin{align*}
		|I_1|
	&	\leq \int_0^T|\partial_t\psi|\int_{\Omega_\eps}|\rho_\eps \uu_\eps\cdot (\pphi-\pphi_\eps)|\dx\dt\\
    &\lesssim \| \rho_\eps\|_{L^\infty(0,T;L^\gamma(D))}\| \uu_\eps\|_{L^2(0,T;L^6(D; \RR^3))}\|\pphi-\pphi_\eps\|_{L^\frac{6\gamma}{5\gamma -6}(D; \RR^3)}.
	\end{align*}
By $\gamma > 3/2$, we have $6\gamma/(5\gamma - 6) < 6$ and thus $W_0^{1,2}$ is compactly embedded in $L^\frac{6\gamma}{5\gamma-6}$. Hence, by \eqref{M1-ii}, \eqref{konv1}, and \eqref{konv3}, it follows that $|I_1|\to 0$.

		To estimate the convective term $I_2$, we write
\begin{equation*}
	\cM_\eps^{\rm ess}:=\{ (t,x)\in (0,T)\times \Omega_\eps\mid \frac{1}{2}\bar{\rho} <\rho_\eps(t,x)<\bar{\rho} +1 \},\quad
\cM_\eps^{\rm res}:=((0,T)\times \Omega_\eps)\setminus \cM_\eps^{\rm ess},
\end{equation*}
	\begin{equation}\label{resandess}\rho_\eps=\rho_\eps^{\rm ess}+\rho_\eps^{\rm res},\quad
		\rho_\eps^{\rm ess}:=\rho_\eps\chi_{\cM_\eps^{\rm ess}},\quad
		\rho_\eps^{\rm res}:=\rho_\eps\chi_{\cM_\eps^{\rm res}}.
	\end{equation}
Similarly, we split $I_2=I_2^{\rm ess}+I_2^{\rm res}$, where $I_2^{\rm ess}$ and $I_2^{\rm res}$ denote the corresponding integrals on $\cM_\eps^{\rm ess}$ and $\cM_\eps^{\rm res}$, respectively. For the essential part we use $|\rho_\eps^{\rm ess}|\lesssim 1$ and Sobolev embedding to conclude 
	\begin{align*}
	|I_2^{\rm ess}|
&\leq \int_0^T|\psi|\int_{\Omega_\eps}| \rho_\eps^{\rm ess}|| \uu_\eps|^2|\Grad ( \pphi-\pphi_\eps)|\dx\dt
\\
&\lesssim \| \uu_\eps\|^2_{L^2(0,T;L^6(D; \RR^3))}\|\Grad ( \pphi-\pphi_\eps) \|_{L^{\frac{3}{2}}(D; \RR^3)}\\
&\lesssim \|\Grad ( \pphi-\pphi_\eps) \|_{L^{\frac{3}{2}}(D; \RR^3)}.
	\end{align*}
    The last term vanishes as $\eps \to 0$ due to \eqref{M1-iii}. Moreover, we find from \eqref{petka} that
    \begin{align}\label{coerc1}
        \|\rho_\eps^{\rm ess} - \bar{\rho}\|_{L^\infty(0,T; L^2(D))} \lesssim \Ma(\eps),
    \end{align}
    where we used that on $\cM_\eps^{\rm ess}$, the function $P(\rho_\eps) - P'(\bar{\rho})(\rho_\eps - \bar{\rho}) - P(\bar{\rho})$ is coercive in the sense that
    \begin{align*}
        P(\rho_\eps) - P'(\bar{\rho})(\rho_\eps - \bar{\rho}) - P(\bar{\rho}) \gtrsim |\rho_\eps - \bar{\rho}|^2,
    \end{align*}
    see \cite[Lemma~5.1]{FeireislNovotny2009singlim}. Due to \eqref{M1-iii} the essential part of the convective term vanishes for $\eps\to 0$. For the residual part we use the following estimate from the proof of \cite[Theorem 1.3]{BellaFeireislOschmann2023b}:
	\begin{align}\label{estRhores}
		\| \rho_\eps^{\rm res}\|_{L^\infty(0,T;L^\gamma(D))}\lesssim \Ma(\eps)^{\frac{2}{\gamma}}.
	\end{align}
From this  we obtain
	\begin{align*}
		|I_2^{\rm res}|
		&\leq \int_0^T |\psi|\int_{\Omega_\eps}| \rho_\eps^{\rm res}|| \uu_\eps|^2|\Grad ( \pphi-\pphi_\eps) |\dx\dt\\
		&\lesssim \| \rho_\eps^{\rm res}\|_{L^\infty(0,T;L^\gamma(D))}\| \uu_\eps\|^2_{L^2(0,T;L^6(D; \RR^3))}\|\Grad ( \pphi-\pphi_\eps) \|_{L^{r}(D; \RR^3)}\\
		&\lesssim \Ma(\eps)^{\frac{2}{\gamma}} \|\Grad ( \pphi-\pphi_\eps) \|_{L^{r}(D; \RR^3)},
	\end{align*}
	where $r=\frac{3\gamma}{2\gamma-3}>\frac{3}{2}$. By \eqref{M1-iv}, the right-hand side of the above inequality vanishes for $\eps\to 0$, which in turn shows that $|I_2| \to 0$ as $\eps \to 0$.\\

    Next, we want to prove that $I_3$ converges to the Brinkman term. Therefore, we first show that $\uu_\eps$ and $\uu$ satisfy the conditions of \eqref{v1}--\eqref{v4}. \eqref{v1} is clear from  \eqref{konv3}. Let us define
\begin{align*}
    g_\eps&:=-\frac{\rho_\eps}{\bar{\rho}},\\
    \hh_\eps&:=\frac{(\bar{\rho}-\rho_\eps)\uu_\eps}{\bar{\rho}}.
\end{align*}
Then, by the first equation of \eqref{NavierSystem}, we have
\begin{align*}
    \div(\uu_\eps)=\frac{1}{\bar{\rho}}\div((\bar{\rho}-\rho_\eps)\uu_\eps)+\frac{1}{\bar{\rho}}\div(\rho_\eps\uu_\eps)
    =\div(\hh_\eps)+\partial_t g_\eps
\end{align*}
in the sense of distributions. Further, we have from \eqref{coerc1} and \eqref{estRhores} that
\begin{align*}
    \| \hh_\eps\|_{L^2(0,T;L^{\frac{6\gamma}{\gamma +6}}(D;\RR^3))}\lesssim \| \rho_\eps-\bar{\rho}\|_{L^\infty (0,T;L^\gamma (D))}\| \uu_\eps\|_{L^2(0,T;L^6(D; \RR^3))}\lesssim \Ma(\eps)^{\min\{\frac{2}{\gamma},1\}},
\end{align*}
and $g_\eps\to -1$ in $L^\infty (0,T;L^\gamma (D))$ due to \eqref{konv1}. In turn, we get \eqref{v3} and \eqref{v4} for $M_\eps:=\cM_\eps^{{\rm ess}}$ such that we can use \eqref{M1-v} and \eqref{M1-vi} for $\vv_\eps=\uu_\eps$ and $\vv=\uu$. By the divergence-free assumption on $\pphi$ and \eqref{M1-i}, we have $\div(\pphi-\pphi_\eps)=0$. This together with \eqref{konv3} and \eqref{M1-v} leads to
\begin{align*}
    I_{3}
    &=-\int_0^\tau\int_{\Omega_\eps}\SS(\Grad  \uu_\eps):\Grad(\psi(\pphi-\pphi_\eps))  \dx\dt\\
    &=-\int_0^\tau\psi\left( \mu\int_{\Omega_\eps} \Grad \uu_\eps:\Grad (\pphi-\pphi_\eps) \dx+\left(\frac{\mu}{3}+\eta\right) \int_{\Omega_\eps} \div( \uu_\eps)\div(\pphi-\pphi_\eps)  \dx\right) \dt\\
    &=-\int_0^\tau \psi \mu \left(\int_{\Omega_\eps} \Grad \uu_\eps:\Grad \pphi \dx\right) \dt+\int_0^\tau \psi \mu \left(\int_{\Omega_\eps} \Grad \uu_\eps:\Grad \pphi_\eps \dx\right) \dt\\
    &\xrightarrow{\eps\to 0} -\int_0^\tau \psi \mu \left(\int_{\Omega} \Grad \uu:\Grad \pphi \dx\right) \dt+\int_0^\tau \psi \mu \left(\int_\Omega \Grad \uu:\Grad \pphi \dx+\int_{\Omega} (\bM \pphi)\cdot \uu\right) \dt\\
    &=\mu \int_{0}^T \int_{\Omega} \psi(\bM\pphi)\cdot \uu \dx\dt.
\end{align*}
Indeed, since
 $\div(\pphi-\pphi_\eps)=0$ and by smoothing $\pphi-\pphi_\eps$ by convolution with some suitable functions $(\eta_\delta)_{\delta>0}$, we get
 \begin{align*}
    \int_0^{\tau}\int_{\Omega_\eps}&\Grad^T \uu_\eps:\Grad(\psi(\pphi-\pphi_\eps)) \dx\dt=
\lim_{\delta\to 0}\int_0^{\tau}\psi\int_{\Omega_\eps}\Grad^T \uu_\eps:\Grad((\pphi-\pphi_\eps)\ast \eta_\delta) \dx\dt\\
&= -\lim_{\delta\to 0}\int_{0}^{\tau}\psi\int_{\Omega_\eps}  \uu_\eps\cdot\Grad(\div((\pphi-\pphi_\eps)\ast\eta_\delta))\dx\dt=0,
\end{align*}
and 
\begin{align*}
\int_0^{\tau}\int_{\Omega_\eps} (\div \uu_\eps \id):\Grad(\pphi-\pphi_\eps)\dx\dt = \int_0^{\tau}\psi\int_{\Omega_\eps} \div \uu_\eps \ \div(\pphi-\pphi_\eps)\dx\dt=0.
\end{align*}

	To estimate  $I_4$ we use that 
$\ff\in L^\infty(0,T; L^\infty(D;\RR^3))$, which together with {\eqref{konv1} and \eqref{M1-ii}} yields 
	\begin{align*}
				|I_{4}|
			&\leq\int_0^T{|\psi|}\int_{\Omega_\eps} |\rho_\eps \ff|   |\pphi-\pphi_\eps| \dx\dt\\
			&\lesssim
			\| \rho_\eps\|_{L^\infty(0,T;L^\gamma(D))}\|\pphi-\pphi_\eps\|_{L^{\frac{\gamma}{\gamma-1}}(D; \RR^3)}
			\to 0
	\end{align*}
    by compact Sobolev embedding as $\gamma/(\gamma-1) < 6$ for any $\gamma > 6/5$. Finally, due to $2\gamma/(\gamma - 1) < 6$ for any $\gamma > 3/2$, \eqref{ID2}, and \eqref{M1-ii}, we get
	\begin{align*}
		|I_5|
		&\leq |\psi(0)|\int_{\Omega_\eps} |(\rho\uu)_{0,\eps
		} \cdot (\pphi-\pphi_\eps)| \dx
        \lesssim \| (\rho\uu)_{0,\eps
		}\|_{L^\infty(0,T;L^{\frac{2\gamma}{\gamma +1}}(D; \RR^3))}\| \pphi-\pphi_\eps\|_{L^{\frac{2\gamma}{\gamma-1}}(D; \RR^3)}\to 0.
	\end{align*}
By \eqref{konv1}, \eqref{konv3}, and \eqref{M1-ii}, we have $I_6\in L^1(0,T)$ and for every function $\xi\in L^\infty(0,T)$
\begin{align*}
    \left|\int_0^T \xi (\tau) I_6(\tau)\dtau \right| &= \left|\int_0^T\int_{\Omega_\eps}\xi(\tau)(\rho_\eps \uu_\eps)(\tau,\cdot)\cdot (\pphi-\pphi_\eps) \psi(\tau) \dx\dtau \right|\\
    &\lesssim \|\xi\|_{L^\infty(0,T)}\|\rho_\eps\|_{L^\infty(0,T;L^\gamma (D))}\| \uu_\eps\|_{L^2(0,T;L^6(D; \RR^3))}\| \pphi-\pphi_\eps\|_{L^\frac{6\gamma}{5\gamma-6}(D; \RR^3)} \to 0,
\end{align*}
by the same arguments as for $I_1$. Consequently, we have $I_6\rightharpoonup 0$ in $L^1(0,T)$ and therefore $I_6(\tau)\to 0$ for almost any $\tau \in [0,T]$. In total, we proved that 
\begin{equation}\label{lim-I1-I5}
\lim_{\eps\to 0} \sum_{i=1}^6 I_i = \lim_{\eps\to 0} I_3=\mu \int_{0}^\tau\int_{\Omega} \psi(\bM\pphi)\cdot \uu\dx\dt.
\end{equation}

We now look at the left-hand side of \eqref{basisGL}. Using \eqref{ID2}, \eqref{konv1}, \eqref{konv3}, and the fact that we might prolong $\pphi \in C_c^1(\Omega; \RR^3)$ by zero to the whole of $D$, we get
	\begin{equation}\label{lim-lhs1}
		\begin{split}
\lim_{\eps\to 0}	\int_0^{\tau}\int_{\Omega_\eps} \rho_\eps \ff \cdot (\pphi\psi)\dx\dt &= \lim_{\eps\to 0}	\int_0^{\tau}\int_{D} \rho_\eps \ff \cdot (\pphi\psi)\dx\dt =\int_0^{\tau} \int_D \bar{\rho} \ff \cdot (\pphi\psi)\dx\dt \\
&= \int_0^{\tau} \int_\Omega \bar{\rho} \ff \cdot (\pphi\psi)\dx\dt,\\
\lim_{\eps\to 0}	\int_0^{\tau}\int_{\Omega_\eps} \SS(\Grad \uu_\eps):\Grad (\pphi\psi)\dx\dt&= \int_0^{\tau}\int_\Omega \SS(\Grad \uu):\Grad (\pphi\psi)\dx\dt = \int_0^{\tau}\int_\Omega \mu\Grad \uu:\Grad (\pphi\psi)\dx\dt ,\\
\lim_{\eps\to 0}
\int_{\Omega_\eps} (\rho\uu)_{0,\eps}\cdot  \pphi\psi(0)\dx&=\int_\Omega \bar{\rho}\uu_0\cdot  \pphi\psi(0)\dx,\\
\lim_{\eps\to 0}	\int_0^{\tau} \partial_t\psi\int_{\Omega_\eps} \rho_\eps\uu_\eps \cdot \pphi\dx\dt &= \int_0^{\tau} \partial_t\psi\int_\Omega \bar{\rho}\uu \cdot \pphi\dx\dt.
		\end{split}
	\end{equation}
Moreover, for every function $\xi\in L^\infty(0,T)$, we have, using \eqref{konv1} and \eqref{konv3},
\begin{align*}
    \lim_{\eps\to 0}\int_0^T\int_{\Omega_\eps} \xi(\tau)(\rho_\eps\uu_\eps)(\tau,\cdot)\pphi\psi(\tau)\dx\dtau = \int_0^T\int_{\Omega} \xi(\tau)(\bar\rho\uu)(\tau,\cdot)\pphi\psi(\tau)\dx\dtau.
\end{align*} Consequently, we see 
\begin{align}\label{lim-lhs3}
    \lim_{\eps\to 0}\int_{\Omega_\eps} (\rho_\eps\uu_\eps)(\tau,\cdot)\pphi\psi(\tau)\dx = \int_{\Omega} (\bar\rho\uu)(\tau,\cdot)\pphi\psi(\tau)\dx
\end{align}
for almost any $\tau\in [0,T]$.
    
To treat the convective term we use again the decomposition $\rho_{\eps}=\rho_{\eps}^{\rm ess}+\rho_{\eps}^{\rm res}$. Recall from \eqref{estRhores} that
	\begin{align*}
		\| \rho_\eps^{\rm res}\|_{L^\infty(0,T;L^\gamma(D))}\lesssim \Ma(\eps)^{\frac{2}{\gamma}}.
	\end{align*} 
Consequently, we may estimate
	\begin{equation}\label{conv1}
		\begin{split}
\Bigg|\int_0^{\tau}\int_{\Omega_\eps} \rho_\eps^{\rm res} \uu_\eps\otimes \uu_\eps :\Grad(\pphi\psi)\dx\dt\Bigg|
&\lesssim \int_0^T|\psi|\| \rho_\eps^{\rm res}\|_{L^\gamma (D)}\| \uu_\eps\|_{L^6(D;\RR^3)}^2\dt\\
&\lesssim \Ma(\eps)^{\frac{2}{\gamma}}\| \uu_\eps\|_{L^2(0,T;W^{1,2}(D;\RR^3))}\to 0,
		\end{split}
	\end{equation}
where we used \eqref{assumptions1} and \eqref{konv3}.\\

	Finally, we know that $\rho_\eps^{\rm ess}\uu_\eps\otimes\uu_\eps$ is uniformly bounded in $L^{1}(0,T;L^3(D;\RR^{3\times 3}))$ and $L^{\infty}(0,T;L^1(D;\RR^{3\times 3}))$. Hence, we may extract a subsequence (not relabeled) such that $ \rho_\eps^{\rm ess}\uu_\eps\otimes\uu_\eps$ converges weakly in $L^{\frac{4}{3}}(0,T;L^2(D;\RR^{3\times 3}))$, the weak limit of which we denote by $\overline{\rho \uu\otimes\uu}$.
Thus,
\begin{align}\label{conv2}
    \int_0^T\int_{\Omega_\eps} (\rho_\eps^{\rm ess} \uu_\eps\otimes \uu_\eps) :\Grad(\pphi\psi) \dx\dt
    \to \int_0^T\int_\Omega  \overline{\rho \uu\otimes\uu} :\Grad(\pphi\psi) \dx\dt.
\end{align}
	A similar argument leads to
	\begin{equation*}
		\sqrt{\rho_\eps}\uu_\eps\xrightharpoonup{*} \sqrt{\bar{\rho}}\uu\quad\text{ weakly-$*$ in }L^\infty(0,T;L^2(D;\RR^3)).
	\end{equation*}
	Since $\vv\mapsto \vv\otimes \vv$ is convex, we have
	\begin{align}\label{conv3}
		\fR:=\overline{\rho \uu\otimes\uu}-\bar{\rho}\uu\otimes \uu\geq 0 \quad\text{ in the sense of symmetric matrices}.
	\end{align}
Gathering \eqref{conv1}, \eqref{conv2}, and \eqref{conv3}, we find
	\begin{equation}\label{lim-lhs2}
	\lim_{\eps\to 0}	\int_0^{\tau}\int_{\Omega_\eps} \rho_\eps \uu_\eps\otimes \uu_\eps  :\Grad(\pphi\psi)\dx\dt =\int_0^{\tau}\int_\Omega \bar{\rho} \uu\otimes \uu  :\Grad(\pphi\psi)\dx\dt+\int_0^{\tau}\int_\Omega \fR :\Grad(\pphi\psi)\dx\dt.
	\end{equation}
Combining \eqref{basisGL}, \eqref{lim-I1-I5}, \eqref{lim-lhs1}, \eqref{lim-lhs3}, and \eqref{lim-lhs2}, we infer
	\begin{equation}\label{final-momentum}
	\begin{split}
		\int_0^{\tau}\int_\Omega \bar{\rho}\uu\cdot \partial_t\pp+\bar{\rho} \uu\otimes\uu:\Grad\pp-&\mu\Grad\uu:\Grad\pp+\bar{\rho} \ff \cdot \pp+\fR:\Grad\pp \dx\dt\\
        &+\int_\Omega \bar{\rho}\uu\cdot\pp(0,\cdot)\dx-\int_\Omega \bar{\rho}\uu\cdot\pp(\tau,\cdot)\dx
		=\mu \int_0^{\tau}\int_\Omega \bM\uu \cdot \pp\dx\dt,
	\end{split}
\end{equation} 
for all $\pp\in C^1_c([0,T)\times\Omega)$, $\pp(t,x)=\psi(t)\pphi(x)$ with $\div\pp=0$, and almost any $\tau\in [0,T]$. Eventually, by a density argument, \eqref{final-momentum} holds true for any $\pp\in C^1_c([0,T)\times\Omega)$ satisfying $\div\pp=0$.

\item\paragraph{Step 3: Energy inequality.}
    Since $(\rho_\eps, \uu_\eps)$ are finite energy weak solutions, we have
\begin{align}\label{energy-eps}
            &\int_{\Omega_\eps}\left[ \frac{1}{2} \id_{\{\rho_\eps>0\}} \frac{|\rho_\eps\uu_\eps|^2}{\rho_\eps} + \frac{1}{\Ma^2(\eps)}P(\rho_\eps) \right](\tau,\cdot) \dx+\int_0^\tau\int_{\Omega_\eps}\SS(\Grad\uu_\eps):\Grad\uu_\eps \dx\dt\nonumber\\
            &\leq \int_{\Omega_\eps} \frac{1}{2} \id_{\{\rho_{0, \eps} > 0\}} \frac{|(\rho\uu)_{0,\eps}|^2}{\rho_{0,\eps}} + \frac{1}{\Ma^2(\eps)}P(\rho_{0,\eps}) \dx+\int_0^\tau \int_{\Omega_\eps}\rho_\eps \ff\cdot \uu_\eps \dx\dt
\end{align}
for almost any $\tau\in [0,T]$. First, we want to control the limit of the second term of the energy inequality
\begin{align*}
    \int_0^\tau\int_{\Omega_\eps}\SS(\Grad\uu_\eps):\Grad\uu_\eps \dx\dt
    &=\int_0^\tau\int_{\Omega_\eps} \mu \Grad \uu_\eps:\Grad \uu_\eps \dx\dt+\left(\frac{\mu}{3}+\eta\right)\int_0^\tau\int_{\Omega_\eps} |\div(\uu_\eps)|^2 \dx\dt\nonumber\\
    &\geq \mu \int_0^\tau \int_{\Omega_\eps} |\Grad \uu_\eps|^2 \dx\dt,
\end{align*}
where we used that with partial integration and the symmetry of second derivatives, we have
\begin{align*}
    \int_0^T\int_{\Omega_\eps}&\Grad^T\uu_\eps:\Grad\uu_\eps  \dx\dt
    =\lim_{\delta\to 0}\int_0^T\int_{\Omega_\eps}\Grad^T\uu_\eps:\Grad(\uu_\eps\ast\eta_\delta)  \dx\dt
    =\lim_{\delta\to 0}-\int_0^T\int_{\Omega_\eps}\uu_\eps\cdot \Grad\div(\uu_\eps\ast\eta_\delta)  \dx\dt\\
    &=\lim_{\delta\to 0}\int_0^T\int_{\Omega_\eps}\div(\uu_\eps)\cdot \div(\uu_\eps\ast\eta_\delta)  \dx\dt=\int_0^T\int_{\Omega_\eps}|\div(\uu_\eps)|^2  \dx\dt,
\end{align*}
where $(\eta_\delta)_{\delta >0}$ are suitable mollifiers.
This leads to
\begin{align*}
            &\int_{\Omega_\eps}\left[ \frac{1}{2} \id_{\{\rho_\eps>0\}} \frac{|\rho_\eps\uu_\eps|^2}{\rho_\eps} +\frac{1}{\Ma^2(\eps)}P(\rho_\eps) \right](\tau,\cdot) \dx+\mu \int_0^\tau \int_{\Omega_\eps} |\Grad \uu_\eps|^2 \dx\dt\nonumber\\
            &\leq \int_{\Omega_\eps} \frac{1}{2} \id_{\{\rho_{0, \eps} > 0\}} \frac{|(\rho\uu)_{0,\eps}|^2}{\rho_{0,\eps}} + \frac{1}{\Ma^2(\eps)}P(\rho_{0,\eps}) \dx+\int_0^\tau \int_{\Omega_\eps}\rho_\eps \ff\cdot \uu_\eps \dx\dt
\end{align*}
for almost any $\tau\in [0,T] $. Let now $\xi\in C_c(0,T)$ with $\xi\geq 0$, $\int_0^T\xi(\tau) \dtau=1$. Multiplication by $\xi (\tau)$ and integration leads to
\begin{align}
            &\int_0^T\int_{\Omega_\eps}\xi(\tau)\left[ \frac{1}{2} \id_{\{\rho_\eps>0\}} \frac{|\rho_\eps\uu_\eps|^2}{\rho_\eps} + \frac{1}{\Ma^2(\eps)}P(\rho_\eps) \right](\tau,\cdot) \dx\dtau+\mu \int_0^T\xi(\tau)\int_0^\tau \int_{\Omega_\eps} |\Grad \uu_\eps|^2 \dx\dt\dtau\nonumber\\
            &\leq \int_0^T\int_{\Omega_\eps}\xi(\tau)\left[\frac{1}{2} \id_{\{\rho_{0, \eps} > 0\}} \frac{|(\rho\uu)_{0,\eps}|^2}{\rho_{0,\eps}} + \frac{1}{\Ma^2(\eps)}P(\rho_{0,\eps})\right] \dx\dtau+\int_0^T\xi(\tau)\int_0^\tau \int_{\Omega_\eps}\rho_\eps \ff\cdot \uu_\eps \dx\dt\dtau. \label{ctvrtka}
\end{align}
Since $\id_{\rho_{0, \eps} > 0} |(\rho \uu)_{0, \eps}|^2 / \rho_{0, \eps}$ is bounded in $L^{1}(0,T;L^3(D))$ and $L^{\infty}(0,T;L^1(D))$, we get
\begin{align*}
     \id_{\rho_\eps>0}\frac{|\rho_\eps\uu_\eps|^2}{\rho_\eps}\rightharpoonup \overline{\rho |\uu|^2}\quad\text{weakly in } L^\frac{4}{3}(0,T;L^2(D)).
\end{align*}
Additionally, by \eqref{integrability}, we have
\begin{align*}
    \int_{\Omega_\eps} \rho_\eps(\tau, \cdot) \dx = \int_{\Omega_\eps} \rho_{0, \eps} \dx \ \text{ for almost any } \ \tau \in [0,T],
\end{align*}
hence
\begin{align*}
    \int_{\Omega_\eps} P(\rho_\eps)(\tau, \cdot) \dx - \int_{\Omega_\eps} P(\rho_{0, \eps}) \dx &= \int_{\Omega_\eps} P(\rho_\eps)(\tau, \cdot) - P'(\bar{\rho})(\rho_\eps - \bar{\rho})(\tau, \cdot) - P(\bar{\rho}) \dx \\
    &\quad - \int_{\Omega_\eps} P(\rho_{0,\eps}) - P'(\bar{\rho})(\rho_{0, \eps} - \bar{\rho}) - P(\bar{\rho}) \dx.
\end{align*}
Moreover, by definition of $P$ (see \eqref{pressPot}), we have $P''(\rho) = p'(\rho)/\rho \geq 0$ and hence almost everywhere
\begin{align*}
    P(\rho_\eps) - P'(\bar{\rho})(\rho_\eps - \bar{\rho}) - P(\bar{\rho}) \geq 0,
\end{align*}
see also Lemma~5.1 in \cite{FeireislNovotny2009singlim}. With this at hand, \eqref{ID1}, \eqref{ID3}, \eqref{M1-vi}, \eqref{konv1}, and \eqref{konv3}, we can pass to the limit $\eps \to 0$ in \eqref{ctvrtka} to obtain
\begin{align*}
    &\int_0^T\int_\Omega \frac{1}{2}\xi(\tau)\overline{\rho|\uu|^2} (\tau,\cdot)\dx\dtau+\mu\int_0^T\int_0^{\tau}\int_\Omega \xi(\tau)|\Grad\uu|^2\dx\dt\dtau+\mu \int_0^T\int_0^{\tau}\int_\Omega \xi(\tau)(\bM\uu)\cdot\uu \dx\dt\dtau\\
    &\leq \int_{\Omega}\frac{1}{2}\bar{\rho} |\uu_0|^2\dx+\int_0^T\int_0^\tau \int_{\Omega}\xi(\tau)\bar{\rho}\ff\cdot \uu\dx\dt\dtau.
\end{align*}
Since $\rho_\eps |\uu_\eps|^2={\rm trace}[\rho_\eps \uu_\eps\otimes\uu_\eps]$ and ${\rm trace}[\, \cdot \, ]$ is a linear operator, we have for $\fR$ from $\eqref{conv3}$ that
\begin{align*}
    {\rm trace} [\fR]
    =\overline{\rho|\uu|^2}-\bar{\rho}|\uu|^2.
\end{align*}
As the function $\xi$ is arbitrary, we get
\begin{align*}
    &\int_\Omega \frac{1}{2}\bar{\rho}|\uu|^2 (\tau,\cdot) \dx+\int_\Omega \frac{1}{2}{\rm trace} [\fR](\tau,\cdot) \dx+\mu\int_0^{\tau}\int_\Omega |\Grad\uu|^2 \dx\dt+\mu \int_0^{\tau}\int_\Omega (\bM\uu)\cdot\uu \dx\dt\\
    &\leq \int_{\Omega}\frac{1}{2}\bar{\rho} |\uu_0|^2 \dx+\int_0^\tau \int_{\Omega}\bar{\rho}\ff\cdot \uu \dx\dt,
\end{align*}
showing that $(\bar{\rho}, \uu)$ is a dissipative solution of the incompressible Navier-Stokes-Brinkman system \eqref{systemstarkeloesung}.

\end{proof}
To finally prove Theorem~\ref{mainTheorem}, we will make use of the weak-strong uniqueness principle (see, e.g., \cite{AbbatielloFeireisl2020}), which is the content of the following proposition.

\begin{prop}[Weak-Strong uniqueness]\label{prop2}
	Let $(\bar{\rho},\uu)$ be a dissipative solution to \eqref{systemstarkeloesung} emanating from the initial datum $\uu_0$ in \eqref{u0}. If there exists a strong solution to \eqref{systemstarkeloesung} emanating from the same initial datum $\uu_0$, then it coincides with $(\bar{\rho},\uu)$.
\end{prop}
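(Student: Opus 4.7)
The plan is to apply the classical relative energy (weak-strong uniqueness) method. Denote by $\tilde\uu$ the given strong solution and introduce the relative energy functional
\begin{equation*}
\mathcal{E}(\tau) := \int_\Omega \frac{1}{2}\bar{\rho}\,|\uu-\tilde\uu|^2(\tau,\cdot)\dx + \frac{1}{2}\int_\Omega \mathrm{trace}[\fR](\tau,\cdot),
\end{equation*}
where $\fR$ is the Reynolds defect measure from Definition~\ref{def:DissSol}. Both summands are non-negative, and $\mathcal{E}(0) = 0$ since the two solutions share the initial velocity $\uu_0$ and hence $\fR(0,\cdot)=0$.

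The core of the argument is to derive a Gronwall-type estimate for $\mathcal{E}$. I would assemble four ingredients: (a) the energy inequality \eqref{energy-diss} for the dissipative solution; (b) the momentum identity \eqref{momentum-diss} tested with $\tilde\uu$, after a density argument that approximates $\tilde\uu$ by compactly supported, solenoidal test fields; (c) the strong equation multiplied by $\uu$ and integrated over $(0,\tau)\times\Omega$, where $\Grad\Pi$ drops out thanks to $\div\uu=0$ and $\uu|_{\partial\Omega}=0$; and (d) the strong equation multiplied by $\tilde\uu$ and integrated, rewriting $\bar{\rho}\partial_t\tilde\uu\cdot\tilde\uu = \partial_t(\frac{1}{2}\bar{\rho}|\tilde\uu|^2)$ and using $\div\tilde\uu=0$ to kill the convective self-product. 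Expanding $|\uu-\tilde\uu|^2 = |\uu|^2 - 2\uu\cdot\tilde\uu + |\tilde\uu|^2$ and combining (a)--(d), the viscous and Brinkman contributions assemble into the non-negative quantities $\mu\int_0^\tau\int_\Omega|\Grad(\uu-\tilde\uu)|^2$ and $\mu\int_0^\tau\int_\Omega (\bM(\uu-\tilde\uu))\cdot(\uu-\tilde\uu)$ on the left-hand side (here $\bM$ being symmetric is essential to complete the square).

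What remains on the right is the classical convective residual $\int_0^\tau\int_\Omega\bar{\rho}\bigl((\uu-\tilde\uu)\otimes(\uu-\tilde\uu)\bigr):\Grad\tilde\uu$ together with $\int_0^\tau\int_\Omega\fR:\Grad\tilde\uu$ and a force term that collapses because both systems are driven by the same $\ff$. The first is bounded by $\|\Grad\tilde\uu\|_{L^\infty}\int_0^\tau\int_\Omega\bar{\rho}|\uu-\tilde\uu|^2$. For the second, since $\fR$ is a symmetric positive-semidefinite matrix-valued measure, one has $|\fR:\Grad\tilde\uu|\leq C\|\Grad\tilde\uu\|_{L^\infty}\,\mathrm{trace}[\fR]$. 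By Definition~\ref{def:StrongSol} the norm $\|\Grad\tilde\uu\|_{L^\infty((0,T)\times\Omega)}$ is finite (choose $p$ large enough and use Sobolev embedding), so I arrive at $\mathcal{E}(\tau) \leq C\int_0^\tau \mathcal{E}(t)\dt$, and Gronwall forces $\mathcal{E}\equiv 0$. This gives $\uu=\tilde\uu$ almost everywhere and $\fR\equiv 0$.

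The main obstacle is making step (b) rigorous: \eqref{momentum-diss} requires test functions in $C^1_c([0,T]\times\Omega;\RR^3)$ with vanishing divergence, but $\tilde\uu$ only lies in $C([0,T]; W^{2,p})$ for all $p<\infty$ and merely vanishes on $\partial\Omega$. One therefore needs a careful approximation of $\tilde\uu$ by such admissible test functions, preserving the divergence-free constraint (for instance by cutting off a boundary layer and correcting the divergence via a Bogovskii-type operator), while controlling the passage to the limit in each term --- in particular the quadratic convective contribution, for which one relies on the weak continuity of $\uu$ stemming from the class $C_{\rm weak}([0,T];L^2)\cap L^2(0,T;W^{1,2}_0)$. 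Once this density step is in place, the Gronwall conclusion is routine.
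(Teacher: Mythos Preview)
Your proposal is correct and follows essentially the same relative-energy route as the paper: combine the dissipative energy inequality \eqref{energy-diss} with the momentum identity \eqref{momentum-diss} tested against the strong solution (after a density argument), use the strong equation to simplify the right-hand side, and close via Gr\"onwall using $\|\Grad\tilde\uu\|_{L^\infty}<\infty$ and the bound $|\fR:\Grad\tilde\uu|\lesssim \|\Grad\tilde\uu\|_{L^\infty}\,\mathrm{trace}[\fR]$. The only cosmetic difference is that the paper first records the relative energy inequality for a generic divergence-free test field $\bU$ and then specializes to $\bU=\tilde\uu$, whereas you work with $\tilde\uu$ from the outset.
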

\begin{proof}
We divide the proof into two steps. First, we recall the derivation of the relative energy inequality, and subsequently apply it to our setting.
	
\item\paragraph{Step 1: Relative Energy Inequality.}
	
We test \eqref{momentum-diss} with $\pp=\bU\in C^1_c([0,T]\times\Omega;\RR^3)$ such that $\div \bU = 0$, and add on both sides the term $\int_{0}^{\tau}\int_\Omega{\bar{\rho}}\partial_t \bU\cdot \bU\dx\dt$. Using that 
\begin{equation*}
\int_{0}^{\tau}\int_\Omega{\bar{\rho}}\partial_t \bU\cdot \bU\dx\dt= \left[ \int_\Omega \frac12{\bar{\rho}} |\bU|^2\dx \right]_{t=0}^{t=\tau},
	\end{equation*}
	and summing with the energy inequality \eqref{energy-diss}, we obtain 
		\begin{align*}
				\biggl[\int_\Omega\frac12 {\bar{\rho}}|\uu-\bU|^2\dx\biggr]_{t=0}^{t=\tau}
		&	+ \int_\Omega\frac12 {\rm trace}[\fR](\tau,\cdot)\dx + \mu\int_0^{\tau}\int_\Omega |\Grad\uu|^2\dx\dt+\mu \int_0^\tau\int_\Omega \bM\uu\cdot\uu\dx\dt\\
			&	\le
			  \int_{0}^{\tau}\int_\Omega\bar{\rho}\partial_t \bU\cdot \bU\dx\dt +\int_0^\tau \int_{\Omega}\bar{\rho}\ff\cdot \uu\dx\dt + \mu\int_0^\tau\int_\Omega\bM\uu\cdot \bU\dx\dt\\
			  &
			  -
			  \int_0^{\tau}\int_\Omega \bar{\rho}\uu\cdot \partial_t \bU+(\bar{\rho} \uu\otimes\uu):\Grad \bU-\mu\Grad\uu:\nabla_x \bU + \bar{\rho} \ff\cdot \bU+\fR:\Grad \bU \dx\dt.
	\end{align*}
	By regrouping some terms, we infer
	\begin{equation}\label{rel-energy-ineq1}
	\begin{split}
	&	\biggl[\int_\Omega\frac12 {\bar{\rho}}|\uu-\bU|^2\dx\biggr]_{t=0}^{t=\tau} + \int_\Omega\frac12 {\rm trace}[\fR](\tau,\cdot)\dx + \mu\int_0^{\tau}\int_\Omega \Grad\uu:\Grad( \uu-\bU)\dx\dt+\mu \int_0^\tau\int_\Omega \bM\uu\cdot(\uu-\bU)\dx\dt\\
&	\le
  \int_{0}^{\tau}\int_\Omega
  \bar{\rho}\partial_t \bU\cdot (\bU-\uu)-(\bar{\rho} \uu\otimes\uu):\Grad \bU \dx\dt 
  +\int_0^\tau \int_{\Omega}\bar{\rho}\ff\cdot( \uu-\bU)\dx\dt -
  \int_0^\tau \int_{\Omega} \fR:\Grad \bU\dx\dt.
	\end{split}
	\end{equation}
	 By the incompressibility condition \eqref{incompressibility-diss} it follows that
	\begin{align*}
 \int_0^\tau \int_{\Omega} \bar{\rho}\uu\otimes\uu:\Grad \bU \dx\dt&
 = \int_0^\tau \int_{\Omega} \bar{\rho}\uu\otimes(\uu-\bU):\Grad \bU \dx\dt
 \\
 &= \int_0^\tau \int_{\Omega} \bar{\rho}(\uu - \bU)\otimes(\uu-\bU):\Grad \bU \dx\dt+
\int_0^\tau \int_{\Omega} \bar{\rho} \bU\otimes(\uu-\bU):\Grad \bU \dx\dt.
	\end{align*}
	Thus, we rewrite \eqref{rel-energy-ineq1} to get the relative energy inequality as
		\begin{equation}\label{rel-energy-ineq2}
		\begin{split}
				\biggl[\int_\Omega\frac12 {\bar{\rho}}|\uu-\bU|^2\dx\biggr]_{t=0}^{t=\tau}
			&+ \int_\Omega\frac12 {\rm trace}[\fR](\tau,\cdot)\dx + \mu \int_0^{\tau}\int_\Omega |\Grad (\uu-\bU)|^2 + \bM (\uu - \bU) \cdot(\uu-\bU) \dx\dt\\
			&	\le
			\int_{0}^{\tau}\int_\Omega
			  \bar{\rho} (\partial_t \bU+ \bU\cdot\Grad \bU)\cdot (\bU-\uu) \dx\dt 
			+\int_0^\tau \int_{\Omega}\bar{\rho}\ff\cdot( \uu-\bU)\dx\dt \\&
		-	\int_0^\tau \int_{\Omega} \fR:\Grad \bU\dx\dt
		- \int_{0}^\tau\int_\Omega \bar{\rho}(\uu - \bU)\otimes(\uu-\bU):\Grad \bU \dx\dt \\
        &- \mu \int_{0}^\tau\int_\Omega \Grad \bU : \Grad(\uu - \bU) \dx\dt - \mu \int_0^\tau \int_\Omega \bM \bU \cdot (\uu - \bU) \dx \dt.
		\end{split}
	\end{equation}
By density we can extend \eqref{rel-energy-ineq2} to any $\bU\in L^q(0,T; W^{1,q}(\Omega; \RR^3)) \cap W^{1,q}(0,T; L^q(\Omega; \RR^3))$ satisfying $\div \bU = 0$ and $\bU|_{\partial \Omega} = 0$, provided $q>1$ is large enough such that all occurring integrals are well-defined.\\
	
\item\paragraph{Step 2: Weak-strong uniqueness.}
	Let $\hat\uu$ be a strong solution to \eqref{systemstarkeloesung} emanating from the initial datum $\uu_0$. Then, since $\div(\uu-\hat\uu)=0$ a.e. in $(0,T)\times \Omega$, it holds
	\begin{equation*}
		\int_0^\tau\int_\Omega
			(\bar{\rho}\partial_t \hat\uu+ \bar{\rho} \hat\uu\cdot\Grad \hat\uu)\cdot (\hat\uu-\uu) \dx\dt = 
			\int_0^\tau\int_\Omega (-\mu\bM\hat\uu+ \mu \Delta_x \hat\uu + \bar{\rho}\ff) \cdot(\hat\uu-\uu) \dx\dt.
	\end{equation*}
Combining this with \eqref{rel-energy-ineq2} for $\bU=\hat\uu$, we obtain
	\begin{align*}
			\biggl[\int_\Omega\frac12 {\bar{\rho}}|\uu-\hat\uu|^2\dx\biggr]_{t=0}^{t=\tau}
	&	+ \int_\Omega\frac12 {\rm trace}[\fR](\tau,\cdot)\dx + \mu\int_0^{\tau}\int_\Omega |\Grad( \uu-\hat\uu)|^2 + \bM(\uu - \hat \uu) \cdot(\uu-\hat\uu) \dx\dt\\
		&	\le
	 	\int_0^\tau\int_\Omega (-\mu\bM\hat\uu+ \mu\Delta_x \hat \uu + \bar{\rho}\ff) \cdot(\hat\uu-\uu) \dx\dt
		+\int_0^\tau \int_{\Omega}\bar{\rho}\ff\cdot( \uu-\hat\uu)\dx\dt \\&
		-	\int_0^\tau \int_{\Omega} \fR:\Grad \hat\uu\dx\dt
		- \int_{0}^\tau\int_\Omega \bar{\rho} (\uu- \hat\uu)\otimes(\uu-\hat\uu):\Grad \hat\uu \dx\dt \\
        & - \mu \int_{0}^\tau\int_\Omega \Grad \hat \uu : \Grad(\uu - \hat \uu) \dx\dt - \mu \int_0^\tau \int_\Omega \bM \hat \uu \cdot (\uu - \hat \uu) \dx \dt,
	\end{align*}
which in turn reduces to
	\begin{align*}
			\int_\Omega\frac12 {\bar{\rho}}|\uu-\hat\uu|^2(\tau,\cdot)\dx&	+ \int_\Omega\frac12 {\rm trace}[\fR](\tau,\cdot)\dx	+ \mu\int_0^{\tau}\int_\Omega | \Grad\uu- \Grad\hat\uu|^2 + \bM(\uu-\hat\uu)\cdot(\uu-\hat\uu) \dx\dt\\
		&	\le
		-	\int_0^\tau \int_{\Omega} \fR:\Grad \hat\uu\dx\dt
		- \int_{0}^\tau\int_\Omega\bar{\rho}(\uu- \hat\uu)\otimes(\uu-\hat\uu):\Grad \hat\uu \dx\dt
		.
	\end{align*}
Being the second and the third term on the left-hand side both non-negative, we infer
	\begin{align*}
			\int_\Omega\frac12 {\bar{\rho}}|\uu-\hat\uu|^2(\tau,\cdot)\dx + \int_\Omega\frac12 {\rm trace}[\fR](\tau,\cdot)\dx
			\le
		-	\int_0^\tau \int_{\Omega} \fR:\Grad \hat\uu\dx\dt
		- \int_{0}^\tau\int_\Omega\bar{\rho}(\uu- \hat\uu)\otimes(\uu-\hat\uu):\Grad \hat\uu \dx\dt
		,
	\end{align*}
and thus
	\begin{align*}
		\int_\Omega\frac12 {\bar{\rho}}|\uu-\hat\uu|^2(\tau,\cdot)\dx + \int_\Omega\frac12 {\rm trace}[\fR](\tau,\cdot)\dx
		&\le \|\Grad \hat\uu\|_{L^\infty((0,\tau)\times\Omega; \RR^3)}
			\int_0^\tau \int_{\Omega}| \fR|\dx\dt
	\\	&+ \|\Grad \hat\uu\|_{L^\infty((0,\tau)\times\Omega; \RR^3)} \int_{0}^\tau\int_\Omega\bar{\rho}|\uu- \hat\uu|^2\dx\dt
		,
	\end{align*}
for a.e. $\tau\in [0,T]$. Eventually, since $\fR$ is positive semidefinite, it follows $|\fR|\lesssim{\rm trace}[\fR]$ and thus by Gr\"onwall's lemma we deduce $\uu=\hat\uu$ and $\fR=0$. This finishes the proof of Theorem~\ref{mainTheorem}.
\end{proof}


\section{Application to randomly perforated domains}\label{sec:applic}
This section is devoted to provide an application of Theorem \ref{mainTheorem}. More precisely,  we consider a family of randomly perforated domains $(\Omega_\eps)_{\eps>0}$, whose holes are balls distributed through a Poisson point process, and whose radii are identically and independently distributed (i.i.d.)~random variables that scale as $\eps^{3}$. Note that this corresponds to the randomized counterpart of our initial example of domains given in \eqref{firstDom}. We will show that, if ${\rm Ma}(\eps)$ satisfies suitable assumptions (see \eqref{MaFinal} below), then $((\Omega_\eps)_{\eps>0}, \Omega)$ satisfy \ref{M0} and \ref{M1}. Thus, in particular, Theorem~\ref{mainTheorem} applies. Moreover, in this way we are able to generalize the outcomes of \cite{BellaOschmann2022} to the time-dependent setting.\\

We let $\Omega\subset\RR^3$ be a smoothly bounded open set which is star-shaped with respect to the origin. We then define the family of  randomly perforated domains $(\Omega_\eps)_{\eps>0}$ as
\begin{align}\label{Omega_epsilon}
	\Omega_\eps:=\Omega\setminus H^\eps,\quad H^\eps:=\bigcup_{I_\eps}T_i^\eps
		=
	\bigcup_{I_\eps}B_{\eps^{3}r_i}(\eps z_i),\quad
    I_\eps:= \{ z_i\in \Phi\cap \eps^{-1}\Omega : \overline{B_{\eps^{3}r_i}(\eps z_i)} \subset \Omega\},
\end{align}
where $\Phi$ is a Poisson point process on $\RR^3$ with homogeneous intensity rate $\lambda >0$. Such a process is characterized by the following two properties:
\begin{itemize}
    \item For each measurable set $S \subset \RR^3$ with finite measure, the probability of finding exactly $n$ points from $\Phi$ inside $S$ is $(\lambda |S|)^n \exp(- \lambda |S|) / n!$;
    \item for two disjoint measurable sets $S_1, S_2 \subset \RR^3$, the random variables $S_1 \cap \Phi$ and $S_2 \cap \Phi$ are independent.
\end{itemize}
The radii $\cR:={\{r_i\}_{z_i\in\Phi}\subset[R_0,\infty) }$ for some $R_0>0$ are i.i.d.~random variables. Further, we assume that there exists a constant $\beta>0$ such that the radii $r_i$ satisfy the moment bound
\begin{align}\label{meanrho}
	\langle r^{1+\beta}\rangle <\infty,
\end{align}
where $\langle f \rangle$ is the expected value of the random variable $f$.
Let $(\tilde\Omega,\mathcal F,\mathbb P)$ be a probability space associated to the marked point process $(\Phi,\cR)$, i.e., the joint process of centers and radii distributed as above. For more details on marked Poisson point processes, we refer the reader to \cite{Last2017}.
\begin{thm}\label{theorem-random-holes}
	Let $(\Omega_{\eps})_{\eps>0}$ be as in \eqref{Omega_epsilon}. Then there exists $\delta=\delta(\beta)>0$ and an exponent $p(\delta, \gamma) >0$ such that, if ${\rm Ma}(\eps)$ vanishes fast enough such that
    \begin{align}\label{MaFinal}
        \limsup_{\eps \to 0} \eps^{-p(\delta, \gamma)} \Ma(\eps)^{\min \{1, \frac{2}{\gamma}\}} = 0,
    \end{align}
then $((\Omega_\eps)_{\eps>0}, \Omega)$ satisfy \ref{M0} and \ref{M1}. The exponent $p(\delta, \gamma)$ can be taken to be
 \begin{align*}
        p(\delta, \gamma) = \max \left\{ 3, \ 1 + \frac{6}{\gamma}, \ 3 \frac{6-\gamma}{2\gamma-3} \right\}.
    \end{align*}
\end{thm}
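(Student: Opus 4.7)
The plan is to verify \ref{M0} and \ref{M1} for the random geometry by adapting the oscillating test function construction of \cite{Allaire1990a, Allaire1990b} to the Poisson setting, in the spirit of \cite{GiuntiHoeferVelazquez2018, GiuntiHoefer2019}, but with quantitative almost-sure estimates sharp enough to absorb the Mach-number loss in \eqref{M1-iv}. Condition \ref{M0} is immediate: because $\Omega_\eps \subset \Omega$ by construction, the zero-extension embeds $W_0^{1,2}(\Omega_\eps;\RR^3)$ into the closed subspace $W_0^{1,2}(\Omega;\RR^3)\subset W_0^{1,2}(D;\RR^3)$, which is stable under weak limits.

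For \ref{M1}, I will first carry out a good/bad dichotomy of the Poisson cloud. Fix a small $\delta=\delta(\beta)>0$ and call an index $i\in I_\eps$ \emph{good} if its enlarged ball $B_{\eps^{1-\delta}}(\eps z_i)$ lies inside $\Omega$ and does not meet any other enlarged ball $B_{\eps^{1-\delta}}(\eps z_j)$, $j\neq i$. A Campbell-formula computation for the marked Poisson process, combined with the moment bound \eqref{meanrho}, yields almost-surely
\begin{equation*}
|B^\eps_{\mathrm{bad}}|\lesssim \eps^{3\delta_0},\qquad \#\{i\text{ good}\}\sim \eps^{-3},
\end{equation*}
for some $\delta_0=\delta_0(\delta,\beta)>0$, where $B^\eps_{\mathrm{bad}}$ collects the bad holes and their enlargements. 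On each good cell $i$ I will solve the Stokes-sphere problem in the annulus $B_{\eps^{1-\delta}}(\eps z_i)\setminus\overline{T_i^\eps}$: find a solenoidal $\mathbf{w}_i$ equal to $\pphi(\eps z_i)$ on the outer sphere and to $0$ on $\partial T_i^\eps$. The corrector is then
\begin{equation*}
\pphi_\eps := \pphi - \sum_{i\text{ good}} (\pphi(\eps z_i)-\mathbf{w}_i)\chi_{B_{\eps^{1-\delta}}(\eps z_i)} + (\text{Bogovskii correction on the bad region}),
\end{equation*}
which belongs to $W_0^{1,2}(\Omega_\eps;\RR^3)$ and is divergence-free. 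The Brinkman matrix is identified via the pointwise ergodic theorem for marked Poisson processes (see \cite{Last2017}) as $\bM = 6\pi\lambda\langle r\rangle\, \id$, recovering the Stokes-drag formula of \cite{CioranescuMurat1982,GiuntiHoeferVelazquez2018}.

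The convergences \eqref{M1-ii}, \eqref{M1-iii} and the limit identities \eqref{M1-v}, \eqref{M1-vi} will then follow by summing the explicit cell profiles and invoking the ergodic theorem: each good cell contributes $\|\Grad\mathbf{w}_i\|_{L^2}^2\sim \eps^3 r_i|\pphi(\eps z_i)|^2$, and summing over $\sim\eps^{-3}$ good cells reproduces $\int(\bM\pphi)\cdot\pphi\dx$ in the limit. Conditions \eqref{v1}--\eqref{v4} are crucial to dispose of the boundary terms on $\partial T_i^\eps$ when integrating $\Grad\pphi_\eps:\Grad\vv_\eps$ by parts: the decomposition $\div\vv_\eps = \partial_t g_\eps + \div\hh_\eps$ with the split of $g_\eps$ into essential/residual parts at the scale $\Ma(\eps)^{2/\gamma}$ is exactly what is needed to absorb these terms. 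The delicate bound \eqref{M1-iv} reduces to controlling $\|\Grad(\pphi-\pphi_\eps)\|_{L^r(D)}$ with $r=3\gamma/(2\gamma-3)$: rescaling each cell gives a factor $\eps^{3(1-1/r)}$, summing over $\eps^{-3}$ cells and adding the bad-set and Bogovskii remainders gives a total blow-up of order $\eps^{-p(\delta,\gamma)/2}$ with
\begin{equation*}
p(\delta,\gamma) = \max\Bigl\{3,\ 1+\tfrac{6}{\gamma},\ 3\cdot\tfrac{6-\gamma}{2\gamma-3}\Bigr\},
\end{equation*}
which is exactly what \eqref{MaFinal} absorbs.

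The main obstacle is producing $\pphi_\eps$ that simultaneously satisfies \eqref{M1-i}, the strong convergence \eqref{M1-iii}, and the sharp $L^r$ blow-up needed for \eqref{M1-iv} on a \emph{random} and possibly clustered hole configuration. The Bogovskii-type divergence correction must be performed globally on $\Omega_\eps$ without spoiling the sharp local $L^r$ estimates, which forces a tight coupling between $\delta$ and the moment exponent $\beta$: $\delta$ must be large enough that the separation distance between good cells dominates each cell's radius, yet small enough that the bad-set volume estimate beats the negative powers of $\eps$ coming from summing the $L^r$-cell norms. Closing this balance together with the quantitative ergodic identification of $\bM$ — uniformly in the estimates above — is where the bulk of the probabilistic and Stokes-analytic work lies.
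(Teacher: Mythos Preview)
Your high-level strategy is correct and matches the paper's: verify \ref{M0} trivially from $\Omega_\eps\subset\Omega$, then build oscillating correctors \`a la Allaire on a good/bad decomposition of the Poisson cloud (the paper imports this decomposition verbatim from \cite[Lemmas~3.1--3.2]{GiuntiHoefer2019}), and identify $\bM = 6\pi\lambda\langle r\rangle\,\id$ via a strong law of large numbers. One methodological difference: the paper does \emph{not} use a Bogovskii correction on the bad region. Instead it runs an iterative family of Stokes problems on the nested covering of \cite[Lemma~3.2]{GiuntiHoefer2019}, which keeps the corrector solenoidal layer by layer and delivers the required $L^p$ bounds directly; a global Bogovskii operator on the irregular set $\Omega_\eps$ with sharp $L^r$ control would be the fragile point of your outline.

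There is, however, a genuine gap in your accounting of the exponent $p(\delta,\gamma)$. You attribute all three terms of the maximum to \eqref{M1-iv}, i.e., to the $L^r$-blow-up of $\Grad(\pphi_\eps-\pphi)$ with $r=3\gamma/(2\gamma-3)$. In the paper only the third term $3(6-\gamma)/(2\gamma-3)$ comes from there. The terms $3$ and $1+6/\gamma$ arise instead in the verification of \eqref{M1-v}, at the step where one integrates $\Grad\oomega_k^\eps:\Grad(\psi\vv_\eps)$ by parts and is left with the pressure--divergence interaction $\int q_k^\eps\,\psi\,\div\vv_\eps$. Because $\div\vv_\eps$ is only controlled through the structure \eqref{v2}--\eqref{v4}, one must estimate $\int \Grad(q_k^\eps\xi_\eps\psi)\cdot\hh_\eps$ with a cut-off $\xi_\eps$ localizing near the holes, and it is the $L^{6\gamma/(5\gamma-6)}$-norm of $\Grad q_k^\eps$ on $\bigcup_i C_i$ that blows up like $\eps^{-(1+6/\gamma)}$ for $\gamma<3$ and like $\eps^{-3}$ for $\gamma\ge 3$. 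This is the dominant constraint on $\Ma(\eps)$ for most values of $\gamma$, and it is invisible in your sketch: the phrase ``dispose of the boundary terms on $\partial T_i^\eps$'' does not capture that the auxiliary \emph{pressure} of the cell problem, not just the velocity corrector, has to be quantitatively controlled against $\hh_\eps$ and the essential/residual split of $g_\eps$. Without this estimate the limit \eqref{M1-v} cannot be closed, and your derivation of $p(\delta,\gamma)$ is incomplete.
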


The rest of this section is devoted to the proof of Theorem \ref{theorem-random-holes}. Clearly the family  $((\Omega_{\eps})_{\eps>0},\Omega)$ with $(\Omega_\eps)_{\eps>0}$ given in \eqref{Omega_epsilon} satisfies \ref{M0}. Thus, we have to prove that also \ref{M1} is satisfied, that is \eqref{M1-i}--\eqref{M1-iv} and \eqref{M1-v}-\eqref{M1-vi} hold true. We start with a general result Lemma \ref{(M2)} (see Section~\ref{sec:general}) which says that if $(\Omega_\eps)_{\eps>0}$ is a family of perforated domains such that \ref{M0} and \ref{H1}--\ref{H5} below hold, then  also \eqref{M1-vi} holds. Afterwards we prove that \ref{H1}--\ref{H5}  are satisfied when  $(\Omega_\eps)_{\eps>0}$ are defined as in  \eqref{Omega_epsilon} and ${\rm Ma}(\eps)$ obeys \eqref{MaFinal} (see Section~\ref{sec:random}). Eventually, for a given $\pphi\in C_c^\infty(\Omega;\RR^3)$ with $\div\pphi=0$ we construct a family of test functions with the properties \eqref{M1-i}--\eqref{M1-iv} and \eqref{M1-v} (see Section~\ref{sec:random-bis}).

\begin{rmk}
    The authors believe that the assumption $\cR \subset [R_0, \infty)$ for some $R_0 > 0$ can be relaxed to
    \begin{align*}
        \langle r^{-m} \rangle \leq C < \infty,
    \end{align*}
    for some $C>0$ and any $m>0$. However, we wish not to unnecessarily complicate the calculations, hence we focus on the case where $r_i \geq R_0 > 0$ for any $i$.
\end{rmk}


\subsection{General perforated domains and special test functions}\label{sec:general}
Let $\Omega\subset\RR^3$ be a bounded domain with smooth boundary. Let $N(\eps)\in\mathbb N$, $N(\eps)\to\infty$ as $\eps\to 0$, and let  $(T_i^\eps)_{1\leq i\leq N(\eps)} \subset \Omega$ be a family of closed smooth sets with non-empty interior (the holes). We consider the family of perforated domains  $(\Omega_{\eps})_{\eps>0}$ given by
\begin{align}\label{Omega-holes}
	\Omega_\eps:=\Omega\setminus
	\bigcup_{i=1}^{N(\eps)} T_i^\eps.
\end{align}
We assume that the holes $T_i^\eps$ are such that \ref{M0} holds. Moreover, we assume that there exist pairs of functions $(\oomega_k^\eps,{\mmu}_k)_{1\leq k\leq 3}$ with the following properties:
\begin{enumerate}[label=(H\arabic*)]
	\item\label{H1}  $\oomega_k^\eps\in W^{1,2}(\Omega;\RR^3)$;
	\item\label{H2} $\div (\oomega_k^\eps)=0$ in $\Omega$ and $\oomega_k^\eps=0$ in the holes $T_i^\eps$;
	\item\label{H3}$\oomega_k^\eps \rightharpoonup\ee_k$ weakly in $W^{1,2}(\Omega;\RR^3)$; 
	\item\label{H4}$\mmu_k\in W^{-1,\infty}(\Omega;\RR^3)$;
	\item\label{H5} For all $\vv \in L^2(0,T;W^{1,2}(\Omega;\RR^3))$, $\vv_\eps \in L^2(0,T;W^{1,2}(\Omega_\eps;\RR^3))$ that satisfy \eqref{v1}--\eqref{v4}, there holds
	\begin{align*}
	\lim_{\eps\to 0} \int_0^\tau\int_{\Omega_\eps} \Grad \oomega_k^\eps:\Grad(\psi\vv_\eps)\dx\dt=\int_0^\tau\int_\Omega\mmu_k \psi\vv\dx\dt ,
	\end{align*}
	for every $\psi\in C_c^\infty((0,T)\times\Omega)$ and a.e. $\tau\in[0,T]$.
\end{enumerate}
Given the hypotheses \ref{H1}--\ref{H5}, we can show:
\begin{lem}\label{(M2)} Let $\Omega \subset \RR^3$ be a smoothly bounded domain, and let $\Omega_\eps$ be as in \eqref{Omega-holes}. Assume that \ref{M0} and \ref{H1}--\ref{H5} are satisfied for $((\Omega_\eps)_{\eps>0}, \Omega)$ and some $(\oomega_k^\eps,\mmu_k)_{1\leq k\leq 3}$.
Then for all $\vv \in L^2(0,T;W_0^{1,2}(\Omega;\RR^3))$, $\vv_\eps \in L^2(0,T;W_0^{1,2}(\Omega_\eps;\RR^3))$ that satisfy \eqref{v1}--\eqref{v4}, we have 
	\begin{align*}
		 \liminf_{\eps\to 0}\int_0^\tau\int_{\Omega_\eps}|\Grad \vv_\eps|^2\dx\dt
		 \ge 	\int_0^\tau\int_\Omega |\Grad\vv|^2\dx\dt+\int_0^\tau\int_\Omega (\bM \vv)\cdot \vv \dx\dt
		 \quad \forall \tau\in [0,T],
	\end{align*}
with $\bM_{ij}=\mu_i^j=\mmu_i \cdot \ee_j$.
\end{lem}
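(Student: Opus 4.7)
The natural approach is the classical Cioranescu-Murat completing-the-square trick. Given any $\pphi \in C_c^\infty((0,T)\times\Omega;\RR^3)$, set $\Phi_\eps(t,x) := \sum_{k=1}^3 \pphi^k(t,x)\,\oomega_k^\eps(x)$, which belongs to $L^2(0,T;W^{1,2}(\Omega_\eps;\RR^3))$ by \ref{H1} and vanishes on the holes by \ref{H2}. Expanding $|\Grad\vv_\eps-\Grad\Phi_\eps|^2\ge 0$ pointwise and integrating yields
\begin{equation*}
\int_0^\tau\!\!\int_{\Omega_\eps}|\Grad\vv_\eps|^2\dx\dt \;\ge\; 2\int_0^\tau\!\!\int_{\Omega_\eps}\Grad\vv_\eps:\Grad\Phi_\eps\dx\dt \;-\int_0^\tau\!\!\int_{\Omega_\eps}|\Grad\Phi_\eps|^2\dx\dt.
\end{equation*}
The plan is to identify the two limits on the right (which together form a quadratic expression in $\pphi$) and then optimize by taking $\pphi\to\vv$ via density.

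Both limits will come from iterating \ref{H5} while discarding weak-strong remainders. Writing $\Grad\Phi_\eps = \sum_k\bigl(\pphi^k\Grad\oomega_k^\eps + \oomega_k^\eps\otimes\Grad\pphi^k\bigr)$, the cross term splits into two pieces. For the first, $\sum_k\int\pphi^k\Grad\oomega_k^\eps:\Grad\vv_\eps$, I would apply \ref{H5} with $\psi=\pphi^k$, and then drop the product-rule remainder $\int\Grad\oomega_k^\eps:(\vv_\eps\otimes\Grad\pphi^k)$ using that $\Grad\oomega_k^\eps\rightharpoonup 0$ in $L^2$ against the strong $L^2$-limit of $\vv_\eps\otimes\Grad\pphi^k$ (Rellich applied to \eqref{v1}); this yields $\sum_k\int\mmu_k\cdot(\pphi^k\vv) = \int(\bM\vv)\cdot\pphi$. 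The second piece, $\sum_k\int(\oomega_k^\eps\otimes\Grad\pphi^k):\Grad\vv_\eps$, converges again by weak-strong (since $\oomega_k^\eps\to\ee_k$ strongly in $L^p$ for $p<6$ from Rellich and \ref{H3}) to $\int\Grad\pphi:\Grad\vv$. For the quadratic term $\int|\Grad\Phi_\eps|^2$, the mixed families coupling one $\Grad\oomega^\eps$ with one $\oomega^\eps$ vanish by the same mechanism, while the strong-strong remainder $\sum_{k,l}(\oomega_k^\eps\otimes\Grad\pphi^k):(\oomega_l^\eps\otimes\Grad\pphi^l)$ converges (via the identity $(a\otimes b):(c\otimes d)=(a\cdot c)(b\cdot d)$) to $|\Grad\pphi|^2$. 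The decisive piece $\sum_{k,l}\int\pphi^k\pphi^l\Grad\oomega_k^\eps:\Grad\oomega_l^\eps$ is handled by \emph{reusing} \ref{H5} with $\vv_\eps:=\oomega_l^\eps$, $\vv:=\ee_l$, $\psi:=\pphi^k\pphi^l$: admissibility is immediate because \ref{H3} gives \eqref{v1} and $\div\oomega_l^\eps=0$ from \ref{H2} makes \eqref{v2}--\eqref{v4} trivial with $g_\eps\equiv\hh_\eps\equiv 0$. This produces $\sum_{k,l}\int\pphi^k\pphi^l\bM_{kl} = \int(\bM\pphi)\cdot\pphi$; interchanging $k\leftrightarrow l$ in this application also yields the symmetry $\bM_{kl}=\bM_{lk}$ as a byproduct.

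Inserting both limits into the square inequality yields, for every $\pphi\in C_c^\infty((0,T)\times\Omega;\RR^3)$, the quadratic lower bound
\begin{equation*}
\liminf_{\eps\to 0}\int_0^\tau\!\!\int_{\Omega_\eps}|\Grad\vv_\eps|^2\dx\dt \;\ge\; 2\!\int(\bM\vv)\cdot\pphi + 2\!\int\Grad\pphi:\Grad\vv - \!\int(\bM\pphi)\cdot\pphi - \!\int|\Grad\pphi|^2.
\end{equation*}
The right-hand side is a quadratic form in $\pphi$ formally maximized at $\pphi=\vv$, delivering precisely $\int|\Grad\vv|^2 + \int(\bM\vv)\cdot\vv$. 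To make this rigorous, \ref{M0} places $\vv\in L^2(0,T;W_0^{1,2}(\Omega;\RR^3))$, so I would approximate $\vv$ by a sequence $\pphi_n\in C_c^\infty((0,T)\times\Omega;\RR^3)$ in $L^2(0,T;W^{1,2})$ and pass $n\to\infty$ on the right; each of the four terms is continuous in this norm (the $\bM$-terms because $\bM$ is a constant matrix). The principal subtlety is the second application of \ref{H5} on $\oomega_l^\eps$ -- i.e., verifying admissibility with the trivial auxiliary data $g_\eps=\hh_\eps=0$; every remaining step is a routine weak-strong argument, so I expect no further obstacles.
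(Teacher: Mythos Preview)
Your proposal is correct and follows essentially the same route as the paper: expand $|\Grad\vv_\eps-\Grad(\sum_k\vp_k\oomega_k^\eps)|^2\ge 0$, apply \ref{H5} once with $\vv_\eps$ and once with $\oomega_l^\eps$ (the latter admissible via $\div\oomega_l^\eps=0$ and trivial $g_\eps,\hh_\eps$), discard weak--strong remainders, and finish by approximating $\vv$ by smooth $\pphi_n$ in $L^2(0,T;W_0^{1,2})$. One small imprecision: your remainder $\int\Grad\oomega_k^\eps:(\vv_\eps\otimes\Grad\pphi^k)$ cannot be killed by claiming ``$\vv_\eps\to\vv$ strongly in $L^2$ via Rellich on \eqref{v1}'', since weak $L^2(0,T;W^{1,2})$ convergence alone gives no time compactness; the paper instead integrates by parts to shift the gradient onto $\vv_\eps$, reducing to $(\oomega_k^\eps-\ee_k)\to 0$ strongly in $L^2(\Omega)$ (time-independent) against a weakly bounded factor, or equivalently you may apply Rellich to the time-integrated quantity $\int_0^\tau\vv_\eps\otimes\Grad\pphi^k\,\dt\in W^{1,2}(\Omega)$.
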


\begin{proof}
The proof we give here is the same for $d=3$ and $d \geq 3$, so let us prove the Lemma in the general setting with obvious changes on the assumptions if $d > 3$. Let $\pp=(\vp_1,...,\vp_d)\in C_c^\infty((0,T)\times\Omega;\RR^d)$. Then
\begin{align}\label{h1}
    0
    &\leq \int_0^\tau\int_\Omega |\Grad (\vv_\eps-\sum_{k=1}^d\vp_k\oomega_k^\eps)|^2\dx\dt\nonumber\\
    &=\int_0^\tau\int_\Omega |\Grad \vv_\eps|^2\dx\dt
    +\sum_{1\leq i,k\leq d}\left[\int_0^\tau\int_\Omega \vp_k\vp_i\Grad\oomega_k^\eps:\Grad \oomega_i^\eps \dx\dt+\int_0^\tau\int_\Omega(\Grad\vp_k\otimes\oomega_k^\eps):(\Grad\vp_i\otimes\oomega_i^\eps)\dx\dt\right]\nonumber\\
    & \quad\quad +2\sum_{1\leq i,k\leq d}\int_0^\tau\int_\Omega \vp_k\Grad \oomega_k^\eps :(\Grad\vp_i\otimes\oomega_i^\eps)\dx\dt
    -2\sum_{1\leq k\leq d}\int_0^\tau\int_\Omega \Grad
    \vv_\eps :(\Grad\vp_k\otimes\oomega_k^\eps)\dx\dt\nonumber\\
    & \quad\quad- 2\sum_{1\leq k\leq d}\int_0^\tau\int_\Omega \Grad\oomega_k^\eps :(\vp_k\Grad \vv_\eps)\dx\dt \nonumber\\
    &=\int_0^\tau\int_\Omega |\Grad \vv_\eps|^2\dx\dt
    +\sum_{1\leq i,k\leq d}\int_0^\tau\int_\Omega \Grad\oomega_k^\eps:\Grad (\vp_k\vp_i\oomega_i^\eps)\dx\dt\nonumber\\
    &\quad\quad-\sum_{1\leq i,k\leq d}\int_0^\tau\int_\Omega  \Grad\oomega_k^\eps: (\oomega_i^\eps \otimes\Grad(\vp_k\vp_i))\dx\dt
    +\sum_{1\leq i,k\leq d}\int_0^\tau\int_\Omega(\Grad\vp_k\otimes\oomega_k^\eps):(\Grad\vp_i\otimes\oomega_i^\eps)\dx\dt\nonumber\\
    & \quad\quad +2\sum_{1\leq i,k\leq d}\int_0^\tau\int_\Omega \vp_k\Grad \oomega_k^\eps :(\Grad\vp_i\otimes\oomega_i^\eps)\dx\dt
    -2\sum_{1\leq k\leq d}\int_0^\tau\int_\Omega \Grad
    \vv_\eps :(\Grad\vp_k\otimes\oomega_k^\eps)\dx\dt\nonumber\\
    & \quad\quad- 2\sum_{1\leq k\leq d}\int_0^\tau\int_\Omega \Grad\oomega_k^\eps :\Grad(\vp_k\vv_\eps)\dx\dt  
    +2\sum_{1\leq k\leq d}\int_0^\tau\int_\Omega \Grad\oomega_k^\eps :(\Grad \vp_k\otimes \vv_\eps)\dx\dt.
\end{align}
Now, we use \ref{H1}--\ref{H3} to infer that we can choose $\vv_\eps=\oomega_k^\eps$ in \ref{H5} to obtain
\begin{align*}
     \int_0^\tau\int_\Omega \Grad\oomega_k^\eps:\Grad (\vp_k\vp_i\oomega_i^\eps)\dx\dt~\to~ \int_0^\tau\int_\Omega \mmu_k\cdot(\vp_k\vp_i \ee_i)\dx\dt.
\end{align*}
Since $\oomega_k^\eps\rightharpoonup \ee_k$ weakly in $W^{1,2}(\Omega;\RR^d)$, we get $\oomega_k^\eps\to \ee_k$ strongly in $L^2(\Omega;\RR^d)$ such that
\begin{align*}
    \int_0^\tau\int_\Omega  \Grad\oomega_k^\eps: (\oomega_i^\eps \otimes\Grad(\vp_k\vp_i))\dx\dt~&\to~0,\\
    \int_0^\tau\int_\Omega(\Grad\vp_k\otimes\oomega_k^\eps):(\Grad\vp_i\otimes\oomega_i^\eps)\dx\dt~&\to~ \int_0^\tau\int_\Omega(\Grad\vp_k\otimes\ee_k):(\Grad\vp_i\otimes\ee_i)\dx\dt,\\
    \int_0^\tau\int_\Omega\vp_k\Grad\oomega_k^\eps:(\Grad\vp_i\otimes\oomega_i^\eps)\dx\dt~&\to~0.
\end{align*}
With $\vv_\eps\rightharpoonup \vv$ weakly in $L^2(0,T;W_0^{1,2}(\Omega;\RR^d))$ and $\oomega_k^\eps\to \ee_k$ strongly in $L^2(\Omega;\RR^d)$, we infer
\begin{align*}
    \int_0^\tau\int_\Omega \Grad
    \vv_\eps :(\Grad\vp_k\otimes\oomega_k^\eps)\dx\dt ~\to~ \int_0^\tau\int_\Omega \Grad
    \vv :(\Grad\vp_k\otimes\ee_k)\dx\dt.
\end{align*}
Due to \ref{H5}, we see that
\begin{align*}
    \int_0^\tau\int_\Omega \Grad\oomega_k^\eps :\Grad(\vp_k\vv_\eps)\dx\dt~\to~\int_0^\tau\int_\Omega \mmu_k\cdot(\vp_k\vv)\dx\dt,
\end{align*}
since $\vv_\eps\rightharpoonup\vv$ weakly in $L^2(0,T;L^6(\Omega;\RR^d))$. Moreover, boundedness of $\vv_\eps$ in $L^2(0,T; W_0^{1,2}(\Omega; \RR^d))$ and the fact that $\oomega_k^\eps \to \ee_k$ strongly in $L^2(\Omega; \RR^d)$ leads by partial integration to
\begin{align*}
    \int_0^\tau\int_\Omega \Grad\oomega_k^\eps :(\Grad \vp_k\otimes \vv_\eps)\dx\dt = - \int_0^\tau \int_\Omega (\oomega_k^\eps - \ee_k) \cdot \div(\Grad \vp_k \otimes \vv_\eps) \dx \dt \to 0.
\end{align*}

{Next, it is clear that there exists a subsequence, still denoted by $\vv_\eps$, such that}
\begin{align*} {\lim_{\eps\to0}\int_0^\tau\int_\Omega|\Grad \vv_\eps|^2\dx\dt=\liminf_{\eps\to 0}\int_0^\tau\int_\Omega|\Grad \vv_\eps|^2\dx\dt.}
\end{align*}
Passing to the limits in (\ref{h1}) leads to
\begin{align*}
    0
    &\leq\liminf_{\eps\to 0}\int_0^\tau\int_\Omega |\Grad \vv_\eps|^2\dx\dt
    +\sum_{1\leq k\leq d}\int_0^\tau\int_\Omega \mmu_k\cdot(\vp_k\pp) \dx\dt \\
    &\quad  +\int_0^\tau\int_\Omega|\Grad \pp|^2\dx\dt
    -2 \int_0^\tau\int_\Omega \Grad \vv:\Grad\pp\dx\dt
    - 2\sum_{1\leq k\leq d} \int_0^\tau\int_\Omega \mmu_k\cdot(\vp_k\vv)\dx\dt.
\end{align*}
Now we choose a sequence $\pp^{(n)}\in C_c^\infty((0,T)\times\Omega;\RR^d)$ with $\pp^{(n)}\to \vv$ strongly in $L^2(0,T;W_0^{1,2}(\Omega;\RR^d))$ and pass to the limit to obtain
\begin{align*}
    0
    &\leq\liminf_{\eps\to 0}\int_0^\tau\int_\Omega |\Grad \vv_\eps|^2\dx\dt
    +\int_0^\tau\int_\Omega (\bM\vv)\cdot\vv \dx\dt 
    +\int_0^\tau\int_\Omega|\Grad \vv|^2 \dx\dt\\
    &\quad -2\int_0^\tau\int_\Omega|\Grad \vv|^2 \dx\dt
    - 2\int_0^\tau\int_\Omega (\bM\vv)\cdot\vv\dx\dt.
\end{align*}
Rearranging leads to 
\begin{align*}
    \int_0^\tau\int_\Omega|\Grad \vv|^2\dx\dt+ \int_0^\tau\int_\Omega (\bM\vv)\cdot\vv\dx\dt
    &\leq\liminf_{\eps\to 0}\int_0^\tau\int_\Omega |\Grad \vv_\eps|^2\dx\dt.
\end{align*}
\end{proof}

\subsection{Validity of \ref{H1}--\ref{H5} in the random  setting} \label{sec:random}
Coming from the rather general class of domains \eqref{Omega-holes} back to randomly perforated domains, from now on let $(\Omega_\eps)_{\eps>0}$ be defined by \eqref{Omega_epsilon}. We then construct a family $(\oomega_k^\eps,{\mmu}_k)_{1\leq k\leq 3}$ verifying \ref{H1}--\ref{H5} under the assumption \eqref{MaFinal}. We start by collecting some useful  notation.\\

 For a Poisson point process $\Phi$ on $\RR^3$ and any bounded set $E\subset\RR^3$ that is star-shaped with respect to the origin, we define the random variables
\begin{equation*}
	\Phi(E):=\Phi\cap E,\quad \Phi^\eps(E):=\Phi\cap(\eps^{-1}E),\quad \mathcal{N}(E):=\#\Phi(E),\quad \mathcal{N}^\eps(E):=\#\Phi^\eps(E).
\end{equation*}
For each $\eta>0$ we define the thinned process as
\begin{equation*}
	\Phi_\eta:=\{x\in\Phi\colon\min_{y\in \Phi, y\ne x}|x-y|\ge\eta\},
\end{equation*}
and $\Phi_\eta(E)$, $\Phi_\eta^\eps(E)$, $\mathcal{N}_\eta(E)$, $\mathcal{N}^\eps_\eta(E)$ accordingly. Furthermore, we recall two lemmas (\cite[Lemmas~3.1 and 3.2]{GiuntiHoefer2019}) which guarantee that the holes $H^\eps$ can be decomposed as $H^\eps=H^\eps_{\rm g}\cup H^\eps_{\rm b}$, where
\begin{itemize}
	\item $H_{\rm g}^\eps$ contains the good holes, which are small and well separated, and
	\item $H_{\rm b}^\eps$ contains the bad holes, which are big and possibly overlapping.
\end{itemize}
\begin{lem}[{\cite[Lemma 3.1]{GiuntiHoefer2019}}]
	\label{giunti1}
	There exists $\delta=\delta(\beta)>0$ such that for almost every $\omega\in \tilde\Omega$ and all $\eps\leq \eps_0=\eps_0(\omega)$, there exists a partition $H^\eps=H^\eps_{\rm g}\cup H^\eps_{\rm b}$ and a set $\Omega^\eps_{\rm b}\subset\RR^3$ such that $H^\eps_{\rm b}\subset \Omega^\eps_{\rm b}$ and 
	\begin{align*}
		\dist(H^\eps_{\rm g};\Omega^\eps_{\rm b})>\eps^{1+\delta},\quad |\Omega^\eps_{\rm b}|\to 0.
	\end{align*}
	Furthermore, $H^\eps_{\rm g}$ is a union of disjoint balls centered in $n^\eps\subset \Phi^\eps(\Omega)$, namely
	\begin{align}\label{k1}
		H^\eps_{\rm g}=\bigcup_{z_i\in n^\eps}B_{\eps^{3}r_i}(\eps z_i),\quad \eps^3 \# n^\eps\to\lambda |\Omega|,\quad \min_{z_i\neq z_j\in n^\eps} \eps|z_i-z_j|\geq 2\eps^{1+\frac{\delta}{2}},\quad \eps^{3}r_i\leq \eps^{1+2\delta}.
	\end{align}
	Finally, for any $\eta>0$ we have
	\begin{align*}
		\lim_{\eps\to 0}\eps^3\#(\{ z_i\in \Phi^\eps_{2\eta}(\Omega)\mid \dist(\eps z_i,\Omega^\eps_{\rm b})\leq \eta \eps \})=0.
	\end{align*}
\end{lem}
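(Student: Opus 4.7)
The plan is to classify the Poisson centres $z_i\in\Phi^\eps(\Omega)$ by the size of their radius and by their local isolation, letting the ``bad'' centres absorb anything that could spoil the separation or the radius bound in \eqref{k1}, and defining $n^\eps$ as the survivors. Concretely, I would call $z_i$ \emph{big} if $r_i>\eps^{-2+2\delta}$ (so $\eps^{3}r_i>\eps^{1+2\delta}$) and \emph{clustered} if some $z_j\in\Phi\setminus\{z_i\}$ satisfies $|z_i-z_j|<2\eps^{\delta/2}$ (so in the $\eps$-rescaled coordinates the corresponding holes lie closer than $2\eps^{1+\delta/2}$). I would then take $\Omega^\eps_{\rm b}$ to be the union of the balls $B_{\eps^{1+\delta/2}}(\eps z_i)$ over the big and clustered centres, which contains $H^\eps_{\rm b}$ by construction.

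The two main estimates are probabilistic. Markov's inequality together with \eqref{meanrho} gives $\mathbb{P}(r>\eps^{-2+2\delta})\lesssim\eps^{(2-2\delta)(1+\beta)}$, so the expected number of big centres in $\eps^{-1}\Omega$ is of order $\eps^{(2-2\delta)(1+\beta)-3}$; for a sufficiently small $\delta=\delta(\beta)>0$ this is summable along the geometric sequence $\eps_n=2^{-n}$, and Borel--Cantelli yields that almost surely no big centre persists for all small enough $\eps$. For the clustered centres, the Mecke--Slivnyak formula for the Poisson process gives an expected count of order $\lambda^{2}|\eps^{-1}\Omega|(\eps^{\delta/2})^{3}\sim\eps^{3\delta/2-3}$, and covering each of them by a ball of radius $\eps^{1+\delta/2}$ produces a set of expected Lebesgue measure of order $\eps^{3\delta}$, which again vanishes a.s. by Borel--Cantelli.

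Setting $n^\eps:=\Phi^\eps(\Omega)\setminus(\text{big}\cup\text{clustered})$ and $H^\eps_{\rm g}:=\bigcup_{z_i\in n^\eps}B_{\eps^{3}r_i}(\eps z_i)$, all quantitative assertions of \eqref{k1} follow directly: the separation $\geq 2\eps^{1+\delta/2}$, the radius bound $\eps^{3}r_i\leq\eps^{1+2\delta}$, and the disjointness (since $2\eps^{1+2\delta}<2\eps^{1+\delta/2}$ for $\delta>0$). The counting limit $\eps^{3}\#n^\eps\to\lambda|\Omega|$ comes from the strong law of large numbers $\eps^{3}\#\Phi^\eps(\Omega)\to\lambda|\Omega|$ combined with the vanishing of the excluded classes. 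The distance bound $\dist(H^\eps_{\rm g},\Omega^\eps_{\rm b})>\eps^{1+\delta}$ holds because any good centre is at rescaled distance $\geq 2\eps^{1+\delta/2}$ from every clustered one (otherwise it would itself be clustered), so that the distance between a good ball and the corresponding ball in $\Omega^\eps_{\rm b}$ is at least $2\eps^{1+\delta/2}-\eps^{1+2\delta}-\eps^{1+\delta/2}=\eps^{1+\delta/2}-\eps^{1+2\delta}>\eps^{1+\delta}$ for all small $\eps$. The final statement for $\Phi^\eps_{2\eta}(\Omega)$ reduces to $|\Omega^\eps_{\rm b}|\to 0$: the $2\eta$-isolation caps the local density of admissible centres by $C/\eta^{3}$, so their number inside the $\eta\eps$-enlargement of $\Omega^\eps_{\rm b}$ in $\eps^{-1}\Omega$ is controlled by $C\eta^{-3}\eps^{-3}|\Omega^\eps_{\rm b}+B_{\eta\eps}|$, and multiplying by $\eps^{3}$ yields a bound of order $\eps^{3\delta/2}\to 0$.

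The main technical obstacle is the simultaneous calibration of $\delta=\delta(\beta)$: it must be small enough that $(2-2\delta)(1+\beta)>3$, so that big centres are asymptotically absent and the good-radius bound $\eps^{1+2\delta}$ is dominated by the enforced separation $\eps^{1+\delta/2}$, while being large enough that $3\delta/2>0$ gives a vanishing volume for the clustered contribution. The moment bound \eqref{meanrho} enters exactly through the Markov step for the big radii, and this is where $\beta$ determines the admissible range of $\delta$.
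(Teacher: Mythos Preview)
The paper does not prove this lemma; it is quoted from \cite{GiuntiHoefer2019} and used as a black box, so there is no in-paper argument to compare against. Your sketch does isolate the two correct mechanisms (large radii versus clustering) and the right probabilistic tools (Markov for the tail of $r$, Mecke--Slivnyak for pair counts, Borel--Cantelli along a geometric subsequence).

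There is, however, a genuine gap in your handling of the big centres. You assert that for $\delta$ small enough the expected number of big centres $\eps^{(2-2\delta)(1+\beta)-3}$ is summable, so that a.s.\ no big centre survives. But $(2-2\delta)(1+\beta)>3$ rewrites as $2\beta-1>2\delta(1+\beta)$, which is only possible when $\beta>\tfrac12$; the hypothesis \eqref{meanrho} gives merely some $\beta>0$. For small $\beta$ big centres do persist, and then two things break: your $\Omega^\eps_{\rm b}$, built from balls of radius $\eps^{1+\delta/2}$, need not contain a big hole whose radius $\eps^{3}r_i$ may be of order $\eps^{6\delta}\gg\eps^{1+\delta/2}$ (compare Lemma~\ref{giunti2}); and your distance argument collapses, since a centre you call ``good'' at rescaled distance $2\eps^{1+\delta/2}$ from a big centre can lie inside that big hole, giving $\dist(H^\eps_{\rm g},\Omega^\eps_{\rm b})=0$.

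The repair requires two additional ingredients that the original construction supplies. First, big holes must be covered at their own scale, and the resulting volume bounded using only $\langle r^{1+\beta}\rangle<\infty$ (one has $\sum_{\text{big}}(\eps^{3}r_i)^{3}\lesssim \eps^{3\beta}$ after exploiting $\eps^{3}r_i\leq\mathrm{diam}(\Omega)$). Second, any would-be good centre lying within $\eps^{1+\delta}$ of the enlarged bad set must itself be reclassified as bad; this iterative absorption across size scales is precisely what Lemma~\ref{giunti2} encodes, and it is not captured by your single-pass big/clustered split.
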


We write $\cI^\eps:=\Phi^\eps(\Omega)\setminus n^\eps$, i.e., the set of centers of the balls in $H^\eps_{\rm b}$.

\begin{lem}[{\cite[Lemma 3.2]{GiuntiHoefer2019}}]\label{giunti2}
	Let $\theta>1$ be fixed. Then, for almost every $\omega\in\tilde\Omega$ and $\eps\leq \eps_0(\omega,\beta,\theta)$, we may choose $H^\eps_{\rm g}$, $H^\eps_{\rm b}$ of Lemma \ref{giunti1} in such a way that the following holds:
	\begin{itemize}
		\item There exist $\Lambda (\beta)>1$, a sub-collection $J^\eps\subset \cI^\eps$, and constants $\{ \lambda_l^\eps \}_{z_l\in J^\eps}\subset [1,\Lambda]$ such that
		\begin{align*}
			H_{\rm b}^\eps\subset \bar{H}_{\rm b}^\eps:=\bigcup_{z_j\in J^\eps}B_{\lambda_j^\eps\eps^{3}r_j}(\eps z_j),\quad \lambda_j^\eps \eps^{3}r_j\leq \Lambda \eps^{6\delta}.
		\end{align*}
		\item There exists $k_{\max}=k_{\max}(\beta)>0$ such that we may partition
		\begin{align*}
			\cI^\eps=\bigcup_{k=-3}^{k_{\max}}\cI_k^\eps,\qquad J^\eps=\bigcup_{k=-3}^{k_{\max}}J_k^\eps,
		\end{align*}
		with $J_k^\eps\subset \cI_k^\eps$ for all $k=1,...,k_{\max}$ and
		\begin{align*}
			\bigcup_{z_i\in\cI_k^\eps}B_{\eps^{3}r_i}(\eps z_i)\subset \bigcup_{z_i\in J_k^\eps}B_{\lambda_i^\eps\eps^{3}r_i}(\eps z_i);
		\end{align*}
		\item For all $k=-3,...,k_{\max}$ and every $z_i,z_j\in J_k^\eps$, $z_i\neq z_j$,
		\begin{align*}
			B_{\theta^2\lambda_i^\eps\eps^{3}r_i}(\eps z_i)\cap B_{\theta^2\lambda_j^\eps\eps^{3}r_j}(\eps z_j)= \emptyset;
		\end{align*}
		\item For each $k=-3,...,k_{\max}$ and $z_i\in\cI_k^\eps$, and for all $z_j\in\bigcup_{l=-3}^{k-1}J_l^\eps$ we have
		\begin{align*}
			B_{\eps^{3}r_i}(\eps z_i)\cap B_{\theta\lambda_j^\eps\eps^{3}r_j}(\eps z_j)=\emptyset.
		\end{align*}
	\end{itemize}
	Finally, the set $\Omega_{\rm b}^\eps$ of Lemma \ref{giunti1} may be chosen as
	\begin{align*}
		\Omega_{\rm b}^\eps=\bigcup_{z_i\in J^\eps}B_{\theta\lambda_i^\eps\eps^{3}r_i}(\eps z_i).\\[0.5cm]
	\end{align*}
\end{lem}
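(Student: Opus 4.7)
The plan is a dyadic-scale greedy Vitali-type algorithm applied bottom-up across the bad indices $\cI^\eps$. First, partition $\cI^\eps$ into scales $\cI_k^\eps := \{z_i \in \cI^\eps : \eps^3 r_i \in [\theta^k a_\eps, \theta^{k+1} a_\eps)\}$ with $a_\eps \asymp \eps^{1+2\delta}$, the threshold where bad holes begin according to Lemma \ref{giunti1}. The index $k$ runs from $-3$ (a few buffer scales near the good/bad threshold are needed to guarantee that certain enlargement constants remain bounded) up to $k_{\max}$, the smallest integer such that $\theta^{k_{\max}+1} a_\eps \geq \max_{z_i \in \Phi^\eps(\Omega)} \eps^3 r_i$. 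Showing that $k_{\max}$ is a constant depending only on $\beta$ requires the moment bound \eqref{meanrho} together with Borel--Cantelli, which I return to in the last paragraph.

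Next I construct $J_k^\eps$ and the multipliers $\{\lambda_l^\eps\}_{z_l\in J^\eps}\subset [1,\Lambda]$ by induction on $k$. Suppose $J_{-3}^\eps, \ldots, J_{k-1}^\eps$ have already been selected. First remove from $\cI_k^\eps$ every $z_i$ such that $B_{\eps^3 r_i}(\eps z_i)$ intersects some previously chosen $B_{\theta \lambda_j^\eps \eps^3 r_j}(\eps z_j)$ with $z_j \in J_l^\eps$, $l<k$; because a scale-$k$ ball is at most a bounded factor larger than a scale-$(k-1)$ ball, each such $z_i$ can be absorbed into the $\lambda$-enlargement of a smaller-scale ball at the cost of a bounded inflation of $\lambda_j^\eps$. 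For the remaining indices apply the Vitali $5r$-covering lemma with dilation factor $\theta^2$ to extract $J_k^\eps$: this yields a subfamily whose $\theta^2$-enlargements are pairwise disjoint and whose $\lambda_j^\eps$-enlargements, with $\lambda_j^\eps \le 5\theta^2 =: \Lambda$, cover the entire $\cI_k^\eps$-family. The first three bullets of the lemma then follow directly: the containment $\bigcup_{z_i \in \cI_k^\eps} B_{\eps^3 r_i}(\eps z_i) \subset \bigcup_{z_i \in J_k^\eps} B_{\lambda_i^\eps \eps^3 r_i}(\eps z_i)$ from Vitali, the pairwise $\theta^2$-disjointness inside each $J_k^\eps$ from the dilation choice, and the fourth bullet (non-intersection with $\theta$-enlargements of lower-scale balls in $\bigcup_{l<k} J_l^\eps$) from the initial trimming step. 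Finally, set $\Omega_{\rm b}^\eps := \bigcup_{z_i \in J^\eps} B_{\theta \lambda_i^\eps \eps^3 r_i}(\eps z_i)$: its measure is controlled by $\#J^\eps \cdot (\theta \Lambda \eps^{6\delta})^3$, which tends to zero once $\delta(\beta)$ is chosen appropriately (or after refining the counting with the moment bound).

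The main obstacle is the uniform-in-$\eps$ bound on $k_{\max}$ holding almost surely. Since $\#\Phi^\eps(\Omega)$ is Poisson with mean $\sim \lambda \eps^{-3}|\Omega|$, the radii $r_i$ are i.i.d.\ with $\langle r^{1+\beta}\rangle < \infty$, and $\mathbb{P}(r > t) \leq C t^{-(1+\beta)}$, a union bound gives $\mathbb{P}(\max_{z_i\in\Phi^\eps(\Omega)} r_i > t) \lesssim \eps^{-3} t^{-(1+\beta)}$. Choosing $t = \eps^{-3/(1+\beta) - \kappa}$ for any $\kappa > 0$ makes this summable along a dyadic sequence of $\eps$, so by Borel--Cantelli $\max r_i \lesssim \eps^{-3/(1+\beta) - \kappa}$ almost surely for $\eps$ small, hence $\eps^3 \max r_i / a_\eps \lesssim \eps^{3\beta/(1+\beta) - 1 - 2\delta - 3\kappa}$. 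Picking $\delta = \delta(\beta) > 0$ and $\kappa$ small enough that this exponent is nonnegative makes the ratio bounded, giving $k_{\max} \leq k_{\max}(\beta, \theta)$. A secondary technical point is ensuring that the inter-scale absorption step does not inflate $\lambda_j^\eps$ beyond $\Lambda$: this is handled because absorption occurs only between consecutive scales and adds at most a factor $\theta$ per transition, iterated at most $k_{\max}(\beta)$ times, so the final multiplier stays in the $\beta$-depending range $[1, \theta^{k_{\max}}\cdot 5\theta^2] = [1, \Lambda(\beta)]$.
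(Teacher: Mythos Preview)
The paper does not prove this lemma; it is quoted verbatim from \cite[Lemma~3.2]{GiuntiHoefer2019} and used as a black box, so there is no argument in the present paper to compare against.

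Your sketch, however, has a genuine gap in the control of $k_{\max}$. A dyadic radius-scale partition cannot yield $k_{\max}$ independent of $\eps$. The set $\cI^\eps = \Phi^\eps(\Omega)\setminus n^\eps$ contains not only holes with $\eps^3 r_i > \eps^{1+2\delta}$ but also \emph{small} holes that fail only the separation condition of Lemma~\ref{giunti1}; those have radii as small as $R_0\eps^3$. Hence the radii appearing in $\cI^\eps$ span the full range $[R_0\eps^3,\Lambda\eps^{6\delta}]$, a ratio of order $\eps^{6\delta-3}\to\infty$, and any $\theta$-dyadic decomposition of this range needs of order $|\log\eps|$ shells, not $O_\beta(1)$. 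Your three ``buffer scales'' $k=-3,-2,-1$ only reach down to $\theta^{-3}a_\eps\asymp\eps^{1+2\delta}$ and do not cover the small close-packed bad holes. Even if one restricts to the large bad holes, your Borel--Cantelli computation gives the ratio $\eps^3\max_i r_i/a_\eps\lesssim \eps^{3\beta/(1+\beta)-1-2\delta-\kappa}$, and the leading exponent $\tfrac{3\beta}{1+\beta}-1$ is positive only for $\beta>\tfrac12$; for $\beta\in(0,\tfrac12]$ no choice of $\delta>0$ rescues it, whereas the lemma is asserted for all $\beta>0$.

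The construction in \cite{GiuntiHoefer2019} is not a radius-scale decomposition. The index $k$ labels generations of an iterative merging procedure applied to the bad balls, and the decisive input is an almost-sure bound (obtained from the moment condition $\langle r^{1+\beta}\rangle<\infty$ via Borel--Cantelli) on the maximal \emph{number of balls in any overlapping cluster}. That combinatorial cluster-size bound --- not a ratio of largest to smallest radius --- is what forces the merging to terminate in a number of steps depending only on $\beta$, and is also what produces the uniform enlargement constant $\Lambda(\beta)$ without the per-step inflation $\theta^{k_{\max}}$ that your last paragraph would incur.
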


Let us moreover recall the Strong Law of Large Numbers (in our particular setting), which can be found, e.g., in \cite[Theorem~8.14]{Last2017}.
\begin{lem}[Strong Law of Large Numbers]\label{SLLN}
    Let $E\subset \RR^3$ be a measurable bounded set, and $(\Phi, \{r_i\}_{z_i \in \Phi})$ be a marked Poisson point process with
    \begin{align*}
        \langle r^m \rangle < \infty
    \end{align*}
    for some $m>0$. Then, almost surely,
    \begin{align*}
        \lim_{\eps \to 0} \eps^3 \cN(\eps^{-1} E) = \lambda |E|, && \lim_{\eps \to 0} \eps^3 \sum_{z_i \in \eps^{-1} E} r_i^m = \lambda \langle r^m \rangle |E|.
    \end{align*}
\end{lem}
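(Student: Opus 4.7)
Both claims are instances of the pointwise ergodic theorem for stationary ergodic marked point processes, applied to $\eta = \{(z_i, r_i)\}$ viewed as a Poisson point process on $\RR^3 \times [R_0, \infty)$ with intensity measure $\lambda\,\mathrm{Leb}_{\RR^3} \otimes \nu$ (where $\nu$ is the common law of the $r_i$), stationary and mixing under spatial translations. The plan is to give a self-contained Chebyshev/Borel--Cantelli plus Kolmogorov SLLN argument, reducing the second assertion to the first.

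For the first claim, $\cN(\eps^{-1}E)$ is Poisson with mean $\lambda \eps^{-3}|E|$, hence $\mathrm{Var}\bigl(\eps^3\cN(\eps^{-1}E)\bigr) = \eps^3 \lambda |E|$. Along the subsequence $\eps_n = 1/n$, Chebyshev's inequality gives
\[
\mathbb P\bigl(\bigl|\eps_n^3\cN(\eps_n^{-1}E) - \lambda |E|\bigr| > \delta\bigr) \,\le\, \frac{\lambda |E|}{n^3 \delta^2},
\]
which is summable in $n$, so Borel--Cantelli yields a.s.\ convergence along $(\eps_n)$. To pass to continuous $\eps$, reduce by translation invariance of $\Phi$ to the case $E$ is a half-open cube containing $0$, for which $\eps \mapsto \eps^{-1}E$ is monotone with respect to inclusion. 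For $\eps \in [\eps_{n+1}, \eps_n]$ the sandwich
\[
\Bigl(\tfrac{n}{n+1}\Bigr)^{3} \eps_n^3 \cN(\eps_n^{-1}E) \,\le\, \eps^3\cN(\eps^{-1}E) \,\le\, \Bigl(\tfrac{n+1}{n}\Bigr)^{3} \eps_{n+1}^3 \cN(\eps_{n+1}^{-1}E)
\]
pins the a.s.\ limit to $\lambda|E|$, since $((n\pm 1)/n)^3 \to 1$. General bounded measurable $E$ are then handled by approximating $E$ from inside and outside by finite disjoint unions of cubes via inner and outer regularity of Lebesgue measure.

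For the second claim (assuming $|E| > 0$, the case $|E|=0$ being trivial), use that the marks $\{r_i\}_{z_i \in \Phi}$ are i.i.d.\ $\sim \nu$ and independent of $\Phi$ by construction of the marked Poisson process. Setting $N_\eps := \cN(\eps^{-1}E)$, we decompose
\[
\eps^3 \sum_{z_i \in \Phi \cap \eps^{-1}E} r_i^m \,=\, \bigl(\eps^3 N_\eps\bigr) \cdot \frac{1}{N_\eps} \sum_{z_i \in \Phi \cap \eps^{-1}E} r_i^m .
\]
The first factor tends a.s.\ to $\lambda|E|$ by the first claim, so $N_\eps \to \infty$ a.s.; conditional on $\Phi$, the second factor is the empirical mean of $N_\eps$ i.i.d.\ samples from $\nu$ under $r \mapsto r^m$, which has finite mean $\langle r^m \rangle$ by hypothesis, so Kolmogorov's SLLN gives a.s.\ convergence to $\langle r^m \rangle$.

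The main obstacle is making the last step rigorous: the index set $\{i : z_i \in \eps^{-1}E\}$ depends on $\eps$ and is not a deterministic prefix of a fixed i.i.d.\ sequence, so the classical SLLN does not apply verbatim. The standard workaround is truncation. For $K>0$, the function $f_K(r) = r^m \chi_{r \le K}$ has second moment $\le K^{2m}$, and the truncated sum $\eps^3 \sum f_K(r_i)$ obeys the Chebyshev/Borel--Cantelli argument used for the counting statement, converging a.s.\ to $\lambda|E|\langle f_K \rangle$. The tail has mean $\lambda|E|\langle r^m \chi_{r>K}\rangle \to 0$ as $K \to \infty$ by dominated convergence; a Markov inequality combined with a diagonal choice $K = K(\eps) \to \infty$ slowly enough controls it almost surely and allows the two passages to be combined. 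Equivalently, and more cleanly, one may invoke directly the pointwise ergodic theorem for marked Poisson point processes (Last 2017, Theorem 8.14), which asserts $|A|^{-1}\sum_{z_i \in A} g(r_i) \to \lambda \langle g(r)\rangle$ a.s.\ along any Van Hove sequence $A \nearrow \RR^3$, specialized to $A = \eps^{-1}E$ with $g \equiv 1$ and $g(r) = r^m$ respectively.
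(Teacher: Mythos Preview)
The paper does not prove this lemma at all: it simply records the statement and cites \cite[Theorem~8.14]{Last2017}. Your final sentence does exactly the same, so in that sense you recover the paper's ``proof''. The bulk of your proposal, however, attempts something the paper does not --- a self-contained Chebyshev/Borel--Cantelli argument --- and that part has two genuine soft spots worth flagging.

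First, the step ``reduce by translation invariance of $\Phi$ to the case $E$ is a half-open cube containing $0$'' is not legitimate for an almost-sure statement. Stationarity only gives equality in law between $\cN(\eps^{-1}(x_0+Q))$ and $\cN(\eps^{-1}Q)$; since the translation $\eps^{-1}x_0$ depends on $\eps$, you cannot couple these on a single realisation, and the monotone sandwich you write down fails for cubes not containing the origin. A correct interpolation argument needs either a star-shaped $E$ (which is in fact the case in all applications of the lemma in the paper, since $\Omega$ is assumed star-shaped with respect to the origin) or a more careful covering by nested dilations.

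Second, for the marked sum with only a first-moment assumption $\langle r^m\rangle<\infty$, the tail control ``Markov inequality combined with a diagonal choice $K=K(\eps)\to\infty$'' is too vague to work as stated: the Markov bound on $\eps^3\sum r_i^m\chi_{r_i>K}$ is $\lambda|E|\langle r^m\chi_{r>K}\rangle/\delta$, which does not depend on $\eps$ and hence is not summable along any subsequence. The lower bound $\liminf\ge\lambda|E|\langle r^m\rangle$ does follow from your truncation by monotonicity, but the matching upper bound requires a genuine SLLN-type argument (e.g.\ Etemadi's proof adapted to stationary random measures), which is precisely the content of the ergodic theorem you cite at the end. So your direct approach, as written, does not close without that citation --- at which point you are back to the paper's one-line proof.
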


In the following, we will extend each function which is defined on $\Omega_\eps$ by zero in $H^\eps$. 
For $r>0$ we set $	B_r:=B_r(0)\subset\RR^3$.
For each $z_i\in n^\eps$ we define
\begin{align*}
	a_{\eps,i}:=\eps^{3}r_i,\quad
	d_{\eps,i}:= \min\Big\{  \dist (\eps z_i,\Omega_{\rm b}^\eps),~\frac{1}{2}\min_{\substack{z_j\in n^\eps\\z_j\neq z_i}} (\eps |z_i-z_j|),~\eps  \Big\}.
\end{align*} 
Since $z_i\in n^\eps$, Lemma \ref{giunti1} guarantees the existence of some $\delta >0$ such that 
\begin{align}\label{grundabschätzung}
	a_{\eps,i}\leq \eps^{1+2\delta},\quad \eps^{1+\delta}\leq d_{\eps,i}\leq \eps.
\end{align}
For legibility, we drop the dependence on $\eps$ and set for $z_i \in n^\eps$ and $\eps>0$ small enough such that $a_{\eps, i} < \frac12 d_{\eps, i}$
\begin{align}
&	T_i:=B_{a_{\eps,i}}(\eps z_i),\quad C_i:=B_{\frac{1}{2}d_{\eps,i}}(\eps z_i)\setminus B_{a_{\eps,i}}(\eps z_i),\quad D_i:=B_{d_{\eps,i}}(\eps z_i)\setminus B_{\frac{1}{2}d_{\eps,i}}(\eps z_i), \notag \\[1em]
&A_i:=B_{d_{\eps,i}}(\eps z_i)\setminus B_{\frac{1}{4}d_{\eps,i}}(\eps z_i),\quad     E_i:=B_{a_{\eps,i}^{-1}d_{\eps,i}}(\eps z_i)\setminus B_{\frac{1}{2}a_{\eps,i}^{-1}d_{\eps,i}}(\eps z_i), \notag \\[1em]
&	B_{1,i}:=B_{\frac12 d_{\eps,i}}(\eps z_i),\quad B_{2,i}:=B_{d_{\eps,i}}(\eps z_i), \notag \\[1em]
&	A_i^0:=B_{d_{\eps,i}}(0)\setminus B_{\frac{1}{4}d_{\eps,i}}(0),\quad   E_i^0:=B_{a_{\eps,i}^{-1}d_{\eps,i}}(0)\setminus B_{\frac{1}{2}a_{\eps,i}^{-1}d_{\eps,i}}(0). \label{sets}
\end{align}
\begin{rmk}
	By \cite[Lemma C.1 and Lemma C.2]{GiuntiHoefer2019}, it follows that
	\begin{equation*}
		\lim_{\eps\to 0}\eps^3\sum_{z_i\in\Phi^\eps(\Omega)}r_i = \lambda\langle r \rangle|\Omega|\quad \text{ and } \quad \lim_{\eps\to 0}\eps^3\sum_{z_i\in
			\mathcal I_\eps}r_i=0 \quad \text{a.s.} 
	\end{equation*}
	Thus, we have a.s.
	\begin{equation}\label{limit-radii}
		\lim_{\eps\to 0}\sum_{z_i\in n^\eps} a_{\eps,i}= 	\lim_{\eps\to 0}\eps^3\sum_{z_i\in\Phi^\eps(\Omega)}r_i-  \lim_{\eps\to 0}\eps^3\sum_{z_i\in
			\mathcal	I_\eps}r_i= \lambda\langle r\rangle|\Omega|.
	\end{equation}
\end{rmk}

Let $\theta>1$ be fixed. Let $ J^\eps=\bigcup_{i=-3}^{k_{\max}}J^\eps_i$, $\{\lambda^\eps_l\}_{z_l\in  J^\eps}$ be given by Lemma \ref{giunti2}. For each $z_i\in J^\eps$ we define 
\begin{align}\label{def:sets}
	R_i:=\lambda_i^\eps r_i,\quad B_{R,i}:=B_{\eps^{3}R_i}(\eps z_i),\quad 
	B_{\theta,i}:=B_{\eps^{3}\theta R_i}(\eps z_i),\quad W_i:=B_{\theta,i}\setminus B_{R,i}.
\end{align}
For each $k\in\{1,2,3\}$ let also $(\oomega_k,q_k)$ be the unique weak solution to the Stokes problem
\begin{align*}
	\left\{ \begin{array}{ll}
		\Grad q_k -\Delta \oomega_k=0 & \text{in } \RR^3\setminus B_1, \\
		\div(\oomega_k)=0 & \text{in } \RR^3\setminus B_1,\\
		\oomega_k=0 & \text{on } \partial B_1,\\
		\oomega_k=\ee_k & \text{at infinity}.
	\end{array} \right.
\end{align*}
In the next lemma we  use $(\oomega_{k},q_k)$ to construct pairs of functions $(\oomega_{k}^{\eps, g},q_k^{\eps, g})_{\eps>0}$ in the spirit of  Allaire's functions in \cite{Allaire1990a}, which vanish on the good holes $H^\eps_{\rm g}$ and such that we have good control of their $L^p$ and $W^{1,p}$ norm, respectively. In particular, the functions $\oomega_{k}^{\eps, g}$ are the ones for which we will find $\mmu_k$ such that $(\oomega_{k}^{\eps, g},\mmu_k)_{1\le k\le 3}$ satisfy \ref{H1}--\ref{H5}.
	\begin{figure}
		\centering
	\begin{tikzpicture}
	\draw (0,0) circle (.5cm);
	\node at (0,0) {$T_i$};
	\draw (0,0) circle (1.5cm);
	\node at (-1,0) {$C_i$};
	\draw (0,0) circle (2.5cm);
	\node at (-2,0) {$D_i$};
	\end{tikzpicture}
	\caption{Cells for the construction of $(\oomega_k^\eps, q_k^\eps)$.}
\end{figure}
\begin{lem}\label{AllaireFunk}
	 For each $\eps>0$ and $k\in \{1,2,3\}$ we consider the pair of functions  $(\oomega_k^{\eps,g},q_k^{\eps,g})$ given by
	
	\begin{equation*}
	(\oomega_k^{\eps,g},q_k^{\eps,g}):=\begin{cases}
		(\ee_k,0)& \text{ in }  \Omega\setminus \bigcup_{z_i\in n^\eps}B_{2,i},\\[1em]
		(\tilde\oomega_k^{\eps,g}, \tilde q_k^{\eps,g})& \text{ in }  \bigcup_{z_i\in n^\eps}D_i,\\[1em]
		\left(\oomega_{k}\left(\frac{\cdot -\eps z_i}{a_{\eps,i}}\right), \frac{1}{a_{\eps,i}}q_k\left(\frac{\cdot -\eps z_i}{a_{\eps,i}}\right)\right)& \text{ in }  C_i \text{ for }z_i\in n^\eps,\\[1em]
		(0,0)&\text{ in }   \bigcup_{z_i\in n^\eps}T_i,
	\end{cases}
	\end{equation*}
	where $	(\tilde\oomega_k^{\eps,g}, \tilde q_k^{\eps,g})$ is the solution to the Stokes problem
	
	\begin{align*}
		\left\{ \begin{array}{ll}
			\Grad \tilde q_k^{\eps,g}-\Delta \tilde \oomega_k^{\eps,g}=0 &  \text{ in }  D_i,\\[1em]
			\div(\tilde \oomega_k^{\eps,g})=0& \text{ in }  D_i , \\[1em]
			(\tilde\oomega_k^{\eps,g}, \tilde q_k^{\eps,g})=(\ee_k,0) & \text{ on } \partial B_{2,i},\\[1em]
				(\tilde\oomega_k^{\eps,g}, \tilde q_k^{\eps,g})=	\left(\oomega_k\left(\frac{\cdot -\eps z_i}{a_{\eps,i}}\right), \frac{1}{a_{\eps,i}}q_k\left(\frac{\cdot -\eps z_i}{a_{\eps,i}}\right)\right) & \text{ on } \partial B_{1,i}.
		\end{array} \right.
	\end{align*}
	Then for all $p>\frac{3}{2}$ it holds
	\begin{align}
		\| \Grad q_k^{\eps,g}\|_{L^p(\cup_{z_i\in n^\eps} C_i;\RR^3)}&\lesssim \eps^{6(\frac{1}{p}-1)},\label{allaire2}\\
        \| \Grad\oomega_k^{\eps,g}\|_{L^p(\cup_{z_i\in n^\eps} C_i;\RR^{3 \times 3})}+\| q_k^{\eps,g}\|_{L^p(\cup_{z_i\in n^\eps} C_i)} &\lesssim  \left\{ \begin{array}{ll}
			\eps^{(1+2\delta)(2-p)}  & \text{if } p\leq 2 \\
			\eps^{3(2-p)}  & \text{if } p>2
		\end{array} ,\right.\label{allaire4}\\
\| \Grad\oomega_k^{\eps,g}\|_{L^p(\cup_{z_i\in n^\eps} A_i;\RR^{3 \times 3})}+\| q_k^{\eps,g}\|_{L^p(\cup_{z_i\in n^\eps} A_i)}&\lesssim \left\{ \begin{array}{ll}
	\eps^{\delta (1-\frac{1}{p})}\eps^{\frac{2}{p}-1} & \text{if } p\leq 2, \\[1em]
	\eps^{\delta (1-\frac{1}{p})}\eps^{-(1+\delta)(1-\frac{2}{p})} & \text{if } p>2.
\end{array} \right. ,\label{allaire3}\\
		\| \Grad\oomega_k^{\eps,g}\|_{L^p(\Omega;\RR^{3 \times 3})}+\| q_k^{\eps,g}\|_{L^p(\Omega)}
		&\lesssim \left\{ \begin{array}{ll}
			(\eps^{\delta (1-\frac{1}{p})}+\eps^{2\delta(\frac{2}{p}-1)})\eps^{\frac{2}{p}-1} & \text{if } p<2 ,\\[1em]
			\eps^{\delta (1-\frac{1}{p})}\eps^{-(1+\delta)(1-\frac{2}{p})}+\eps^{3(\frac{2}{p}-1)} & \text{if } p \geq 2.
		\end{array} \right.\label{allaire1}
	\end{align}
\end{lem}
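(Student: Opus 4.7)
I would approach the proof through the asymptotic behavior of the classical Stokes flow $(\oomega_k,q_k)$ past a sphere. Its explicit representation yields the pointwise decay
\[
|\oomega_k(y)-\ee_k|\lesssim |y|^{-1},\quad |\Grad \oomega_k(y)|\lesssim |y|^{-2},\quad |q_k(y)|\lesssim |y|^{-2},\quad |\Grad q_k(y)|\lesssim |y|^{-3}
\]
for $|y|\ge 1$. The plan is to estimate $(\oomega_k^{\eps,g},q_k^{\eps,g})$ separately on each building block $C_i$ and $D_i$ (and, by restriction, $A_i$), to sum the contributions over $z_i\in n^\eps$, and finally to add the trivial contribution from the complement $\Omega\setminus\bigcup_i B_{2,i}$, where $\Grad\oomega_k^{\eps,g}=0$ and $q_k^{\eps,g}=0$. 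Throughout, the sums will be closed using the strong law of large numbers (Lemma~\ref{SLLN}) in combination with the moment bound \eqref{meanrho} and the separation estimates \eqref{grundabschätzung}.

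On $C_i$ the change of variables $y=(x-\eps z_i)/a_{\eps,i}$ sends $C_i$ to $C_i^0:=B_{d_{\eps,i}/(2a_{\eps,i})}\setminus B_1$ and produces
\[
\|\Grad \oomega_k^{\eps,g}\|_{L^p(C_i)}^p = a_{\eps,i}^{3-p}\int_{C_i^0}|\Grad \oomega_k|^p\,{\rm d}y,\qquad \|\Grad q_k^{\eps,g}\|_{L^p(C_i)}^p = a_{\eps,i}^{3-2p}\int_{C_i^0}|\Grad q_k|^p\,{\rm d}y,
\]
together with the analogous identity for $\|q_k^{\eps,g}\|_{L^p(C_i)}$. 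Plugging in the decay above reduces everything to radial integrals $\int_1^R r^{2-2p}\,{\rm d}r$ and $\int_1^R r^{2-3p}\,{\rm d}r$ with $R=d_{\eps,i}/(2a_{\eps,i})$: for $p>3/2$ (respectively $p>1$ for the pressure gradient) these are bounded in $R$, so the $C_i$-contribution to $\|\Grad\oomega_k^{\eps,g}\|_{L^p}^p$ is simply $\sim a_{\eps,i}^{3-p}$ (respectively $a_{\eps,i}^{3-2p}$); otherwise a factor $R^{3-2p}$ (respectively $R^{3-3p}$) appears and transforms back into $a_{\eps,i}^{p}d_{\eps,i}^{3-2p}$ (respectively $d_{\eps,i}^{3-3p}$).

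On the buffer annulus $D_i$ the pair $(\tilde\oomega_k^{\eps,g},\tilde q_k^{\eps,g})$ solves a Stokes system whose two boundary traces differ by $O(a_{\eps,i}/d_{\eps,i})$ in view of the decay $|\oomega_k-\ee_k|\lesssim |y|^{-1}$ on $\partial B_{1,i}$. A Bogovskii-type lift of the divergence-free boundary discrepancy, followed by the standard Stokes $L^p$-estimate on the annulus (rescaled from diameter $d_{\eps,i}$ to diameter one), then yields
\[
\|\Grad\tilde\oomega_k^{\eps,g}\|_{L^p(D_i)}+\|\tilde q_k^{\eps,g}-c_i\|_{L^p(D_i)}\lesssim a_{\eps,i}\,d_{\eps,i}^{3/p-2},\qquad \|\Grad\tilde q_k^{\eps,g}\|_{L^p(D_i)}\lesssim a_{\eps,i}\,d_{\eps,i}^{3/p-3}
\]
for a suitable constant $c_i$. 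The bounds on $A_i$ then follow from these together with the pointwise estimate $|\Grad \oomega_k^{\eps,g}|\lesssim a_{\eps,i}/d_{\eps,i}^2$ on the outer part of $C_i$ (where $|y|\ge d_{\eps,i}/(4a_{\eps,i})$) integrated over $|A_i|\sim d_{\eps,i}^3$.

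The final step is summation. Each intermediate sum has the form $\sum_{z_i\in n^\eps}a_{\eps,i}^\alpha d_{\eps,i}^\nu = \eps^{3\alpha}\sum_{z_i}r_i^\alpha d_{\eps,i}^\nu$; one bounds $d_{\eps,i}$ by either $\eps$ or $\eps^{1+\delta}$ according to the sign of $\nu$ (both endpoints from \eqref{grundabschätzung}), and then $\eps^3\sum r_i^\alpha\lesssim 1$ by Lemma~\ref{SLLN} whenever $\alpha\le 1+\beta$, while the assumption $r_i\ge R_0>0$ handles negative exponents. I expect the principal obstacle to be the bookkeeping in the range $p\le 2$: there the radial integrals on $C_i$ genuinely diverge, so the estimate depends on both endpoints of the window $d_{\eps,i}\in[\eps^{1+\delta},\eps]$, and one has to trade the gain from $d_{\eps,i}\ge \eps^{1+\delta}$ against the loss in $R$ to recover the stated factors $\eps^{\delta(1-1/p)}$ and $\eps^{(1+2\delta)(2-p)}$; for $p>2$ the bounded-$R$ regime makes the scaling much cleaner, and the weaker bound $\eps^{3(2-p)}$ stated in \eqref{allaire4} follows directly from what the computation produces.
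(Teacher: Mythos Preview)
Your strategy is essentially the one in the paper: decay of the exterior Stokes solution, rescaling on each $C_i$, a Stokes estimate on the buffer annulus $D_i$ after rescaling to a fixed annulus, and summation via \eqref{limit-radii} and \eqref{grundabschätzung}. The per-cell bounds you write down, in particular $\|\Grad\tilde\oomega_k^{\eps,g}\|_{L^p(D_i)}\lesssim a_{\eps,i}d_{\eps,i}^{3/p-2}$ and the $C_i$-contribution $\sim a_{\eps,i}^{3-p}$, coincide with what the paper obtains. For the $D_i$-step the paper does not use a Bogovskii lift but instead applies Galdi's Stokes $L^p$-theory directly, controlling the rescaled boundary datum through a trace inequality and a cutoff; this is equivalent to what you propose and yields the same scaling.

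There is one genuine misconception in your final paragraph. You expect the difficulty for $p\le 2$ to come from the radial integrals on $C_i$ ``genuinely diverging''. They do not: for all $p>\tfrac32$ the integrals $\int_1^R r^{2-2p}\,dr$ and $\int_1^R r^{2-3p}\,dr$ are bounded uniformly in $R$, so the $C_i$-contribution is always $\sum_i a_{\eps,i}^{3-p}$ (respectively $\sum_i a_{\eps,i}^{3-2p}$), with no $d_{\eps,i}$-dependence at all. The split $p\le 2$ versus $p>2$ in \eqref{allaire4} arises purely at the \emph{summation} stage: one writes $a_{\eps,i}^{3-p}=a_{\eps,i}\cdot a_{\eps,i}^{2-p}$ and bounds $a_{\eps,i}^{2-p}$ by $(\eps^{1+2\delta})^{2-p}$ if $p\le 2$ (upper bound from \eqref{grundabschätzung}) and by $(\eps^3 R_0)^{2-p}$ if $p>2$ (using $r_i\ge R_0$). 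Likewise, the two-sided window $d_{\eps,i}\in[\eps^{1+\delta},\eps]$ enters only through the $D_i$- and $A_i$-sums, where the paper factors $a_{\eps,i}^p d_{\eps,i}^{3-2p}=a_{\eps,i}\cdot(a_{\eps,i}/d_{\eps,i})^{p-1}\cdot d_{\eps,i}^{2-p}$, bounds the ratio by $\eps^{\delta(p-1)}$, and then chooses the appropriate endpoint for $d_{\eps,i}^{2-p}$ according to the sign of $2-p$. If you carry your computation through, you will find this automatically; just be aware that the anticipated ``divergence'' is not where the case distinction comes from.
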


\begin{proof}
	We use standard regularity theory for Stokes equations giving $|\Grad^{l+1}\oomega_k(x)|+|\Grad^l q_k(x)|\lesssim |x|^{-(l+2)}$, and $p>\frac{3}{2}$ to conclude
	\begin{align}
		\| \Grad q_k\|_{L^p(\RR^3\setminus B_1)}^p&\lesssim 1,\label{w1}\\
		\| \Grad\oomega_k\|^p_{L^p(\RR^3\setminus B_1; \RR^{3\times 3})} + \| q_k\|^p_{L^p(\RR^3\setminus B_1)}&\lesssim 1,\label{w2}\\
		\| \Grad\oomega_k\|_{L^p(E_i^0; \RR^{3 \times 3})}^p + \| q_k\|_{L^p(E_i^0)}^p&\lesssim \left( \frac{a_{\eps,i}}{d_{\eps, i}} \right)^{2p-3},\label{w3}\\
		\| \Grad\oomega_k\|_{L^p(B_{\frac{1}{2}d_{\eps,i}a_{\eps,i}^{-1}}\setminus B_{\frac{1}{4}d_{\eps,i}a_{\eps,i}^{-1}}; \RR^{3 \times 3})}^p +\| q_k\|_{L^p( B_{\frac{1}{2}d_{\eps,i}a_{\eps,i}^{-1}}\setminus B_{\frac{1}{4}d_{\eps,i}a_{\eps,i}^{-1}})}^p&\lesssim \left( \frac{a_{\eps,i}}{d_{\eps, i}} \right)^{2p-3} .\label{w4}       
	\end{align}
	Rescaling together with \eqref{w1}, $r_i^{-1}<R_0^{-1}\lesssim 1$, and \eqref{limit-radii} leads to
	\begin{align*}
		\| \Grad q_k^{\eps,g}\|^p_{L^p(\bigcup_{z_i \in n^\eps} C_i; \RR^3)}
		&=\sum_{z_i\in n^\eps} a_{\eps,i}^{3-2p}\| \Grad q_k \|^p_{L^p(B_{\frac{1}{2}d_{\eps,i}a_{\eps,i}^{-1}}\setminus B_1; \RR^3)}
		\lesssim \sum_{z_i\in n^\eps} a_{\eps,i}^{3-2p}\\
		&=\sum_{z_i\in n^\eps} a_{\eps,i}\eps^{6(1-p)} r_i^{2(1-p)}
		\lesssim \eps^{6(1-p)},
	\end{align*}
	and \eqref{allaire2} follows.
    
    We now prove \eqref{allaire4}, \eqref{allaire3}, and \eqref{allaire1}.
We start by noticing that, since $(\Grad \oomega_k^{\eps,g}, q_k^{\eps,g})=(0,0)$ in $[ \Omega\setminus (\cup_{z_i\in n^\eps}B_{2,i})] \cup (\cup_{z_i\in n^\eps}T_i)$, we have
\begin{align*}
	\| \Grad\oomega_k^{\eps,g}\|^p_{L^p(\Omega;\RR^{3 \times 3})}+\| q_k^{\eps,g}\|^p_{L^p(\Omega)}&=
\| \Grad\oomega_k^{\eps,g}\|^p_{L^p(\cup_{z_i\in n^\eps} C_i;\RR^{3 \times 3})}+\| q_k^{\eps,g}\|^p_{L^p(\cup_{z_i\in n^\eps} C_i)}	\\
&\qquad + \| \Grad\oomega_k^{\eps,g}\|^p_{L^p(\cup_{z_i\in n^\eps} D_i;\RR^{3 \times 3})}+\| q_k^{\eps,g}\|^p_{L^p(\cup_{z_i\in n^\eps} D_i)}.
\end{align*}

From \eqref{limit-radii}, \eqref{w2}, the fact that $a_{\eps,i}^{2-p}=(\eps^{3})^{(2-p)}(r_i)^{(2-p)}\le \eps^{(1+2\delta)(2-p)}$, and $r_i\ge R_0 > 0$, we find
\begin{equation}\label{allaire1-1}
\begin{split}
	\| \Grad \oomega_k^{\eps,g}\|^p_{L^p(\cup_{z_i\in n^\eps} C_i; \RR^{3 \times 3})} + \|  q_k^{\eps,g}\|^p_{L^p(\cup_{z_i\in n^\eps} C_i)}
 &= \sum_{z_i\in n^\eps} a_{\eps,i}^{3-p}(\| \Grad \oomega_k \|^p_{L^p(B_{\frac{1}{2}d_{\eps,i}a_{\eps,i}^{-1}}\setminus B_1; \RR^{3 \times 3})} +\|  q_k \|^p_{L^p(B_{\frac{1}{2}d_{\eps,i}a_{\eps,i}^{-1}}\setminus B_1)})\\
&\lesssim \sum_{z_i\in n^\eps} a_{\eps,i}^{3-p}
= \sum_{z_i\in n^\eps} a_{\eps,i} a_{\eps,i}^{2-p}
\lesssim \left\{ \begin{array}{ll}
	\eps^{(1+2\delta)(2-p)}  & \text{if } p \leq 2 \\
	\eps^{3(2-p)}  & \text{if } p>2
\end{array} .\right. 
\end{split}
\end{equation}
This proves \eqref{allaire4}.

Let now $(\vv_k^\eps,\pi_k^\eps)$ be the solution of the Stokes system
	\begin{align*}
		\left\{ \begin{array}{ll}
			\Grad \pi_k^\eps -\Delta \vv_k^\eps=0 & \text{ in } B_2\setminus B_1, \\
			\div(\vv_k^\eps)=0 & \text{ in } B_2\setminus B_1,\\
		{\vv_k^\eps=\oomega_k(\frac{1}{2}d_{\eps,i}a_{\eps,i}^{-1}\cdot)-\ee_k} & \text{ on } \partial B_1,\\
			\vv_k^\eps=0 & \text{ on }\partial B_2.
		\end{array} \right.
	\end{align*}

 Thus, we have that the tuple $(\vv_k^\eps + \ee_k, \, 2d_{\eps,i}^{-1}\pi_k^\eps)(2d_{\eps,i}^{-1}(\cdot -\eps z_i))$ solves the Stokes equation in $D_i$ with boundary data
	\begin{align*}
		\vv_k^\eps(2d_{\eps,i}^{-1}(\cdot-\eps z_i))+\ee_k &= \oomega_k\left(\frac{\cdot -\eps z_i}{a_{\eps,i}}\right) \text{ on }\partial B_{1,i},\\
		\vv_k^\eps(2d_{\eps,i}^{-1}(\cdot-\eps z_i))+\ee_k &= \ee_k \text{ on }\partial B_{2,i}.
	\end{align*}    
	Due to the uniqueness of the solution of the Stokes equation and by definition of $(\oomega_k^{\eps,g},q_k^{\eps,g})$ we get
	\begin{align*}
		\oomega_k^{\eps,g}-\ee_k&=\vv_k^\eps(2d_{\eps,i}^{-1}(\cdot -z_i))\quad\hspace{0.6cm}\text{ in }D_i,\\
		q_k^{\eps,g}&=\frac{2}{d_{\eps,i}}\pi_k^\eps(2d_{\eps,i}^{-1}(\cdot -z_i))\quad \text{ in }D_i.
	\end{align*}
	By rescaling and appealing to \cite[Theorem II.4.3 and Theorem IV.6.1]{Galdi2011}, it follows that
	\begin{equation}\label{eq1}
		\begin{split}
		&\| \Grad\oomega_k^{\eps,g} \|_{L^p(\cup_{z_i\in n^\eps} D_i; \RR^{3 \times 3})}^p + \| q_k^{\eps,g}\|_{L^p(\cup_{z_i\in n^\eps} D_i)}^p
		\lesssim \sum_{z_i\in n^\eps} d_{\eps,i}^{3-p}\| \Grad\vv_k^\eps\|_{L^p(B_2\setminus B_1; \RR^{3 \times 3})}^p+d_{\eps,i}^{3-p}\| \pi_k^\eps\|_{L^p(B_2\setminus B_1)}^p\\
		&\lesssim \sum_{z_i\in n^\eps} d_{\eps,i}^{3-p} \| \oomega_k(2^{-1}d_{\eps,i}a_{\eps,i}^{-1}\cdot)-\ee_k \|_{W^{1-\frac{1}{p},p}(\partial B_1; \RR^3)}^p \\
		&\lesssim \sum_{z_i\in n^\eps} d_{\eps,i}^{3-p} \| (\eta (\oomega_k-\ee_k ))(2^{-1}d_{\eps,i}a_{\eps,i}^{-1}\cdot)\|_{W^{1,p}(B_2\setminus B_1; \RR^3)}^p\\
		&=\sum_{z_i\in n^\eps}a_{\eps,i}^3 d_{\eps,i}^{-p}\left(\| \eta (\oomega_k-\ee_k)\|_{L^p(E_i^0; \RR^3)}^p +\left(\frac{1}{2}\frac{d_{\eps,i}}{a_{\eps,i}}\right)^p\| \Grad\eta(\oomega_k-\ee_k)+\eta\Grad\oomega_k \|_{L^p(E_i^0; \RR^{3 \times 3})}^p\right),
	\end{split}
	\end{equation}
	where $0\le\eta\le1$ is a cut-off function  in $B_{d_{\eps,i}a_{\eps,i}^{-1}} \setminus B_{\frac{1}{2}d_{\eps,i}a_{\eps,i}^{-1}}$  such that $\eta=0$ on $\partial B_{d_{\eps,i}a_{\eps,i}^{-1}}$, $\eta =1$ on $B_{\frac{1}{2}d_{\eps,i}a_{\eps,i}^{-1}}$, and $|\Grad\eta|\lesssim\frac{a_{\eps,i}}{d_{\eps,i}}$. Combining this with \eqref{w3} gives
\begin{equation}\label{eq2}
	\begin{split}
	&\| \eta (\oomega_k-\ee_k)\|_{L^p(E_i^0; \RR^3)}^p +\left(\frac{1}{2}\frac{d_{\eps,i}}{a_{\eps,i}}\right)^p\| \Grad\eta(\oomega_k-\ee_k)+\eta\Grad\oomega_k \|_{L^p(E_i^0; \RR^{3 \times 3})}^p\\
	&\lesssim \| \oomega_k-\ee_k\|_{L^p(E_i^0; \RR^3)}^p +\left(\frac{d_{\eps,i}}{a_{\eps,i}}\right)^p\| \Grad\oomega_k \|_{L^p(E_i^0; \RR^{3 \times 3})}^p+\left(\frac{1}{2}\frac{d_{\eps,i}}{a_{\eps,i}}\right)^p\| \Grad\eta\|_{L^\infty(E_i^0; \RR^3)}^p\| \oomega_k-\ee_k\|_{L^p(E_i^0)}^p\\
	&\lesssim \| \oomega_k-\ee_k\|_{L^p(E_i^0; \RR^3)}^p +\left(\frac{d_{\eps,i}}{a_{\eps,i}}\right)^p\left(\frac{d_{\eps,i}}{a_{\eps,i}}\right)^{3-2p}.
\end{split}    
\end{equation}
	Now we use $|\oomega_k(x)-\ee_k|\leq |x|^{-1}$ to infer
	\begin{align*}
		\|  \oomega_k-\ee_k\|_{L^p(E_i^0; \RR^3)}^p
		\lesssim \int_{E_i^0} \frac{1}{|x|^{p}}\dx
		\lesssim \left( \frac{d_{\eps,i}}{a_{\eps,i}} \right)^{3-p}.
	\end{align*}
	This together with \eqref{eq1} and \eqref{eq2} gives us
\begin{equation}\label{allaire1-2}
	\begin{split}
		\| \Grad\oomega_k^{\eps,g}\|_{L^p(\cup_{z_i\in n^\eps} D_i; \RR^{3 \times 3})}^p+\| q_k^{\eps,g}\|_{L^p(\cup_{z_i\in n^\eps} D_i)}^p
	&\lesssim \sum_{z_i\in n^\eps} d_{\eps,i}^{-p} a_{\eps,i}^3 \left( \frac{d_{\eps,i}}{a_{\eps,i}} \right)^{3-p}
	= \sum_{z_i\in n^\eps} a_{\eps,i} \left( \frac{a_{\eps,i}}{d_{\eps,i}} \right)^{p-1}d_{\eps,i}^{2-p} \\
	&\lesssim \left\{ \begin{array}{ll}
		\eps^{\delta p-1}\eps^{2-p} & \text{if } p \leq 2 \\
		\eps^{\delta p-1}\eps^{-(1+\delta)(p-2)} & \text{if } p>2
	\end{array} ,\right.
	\end{split}
\end{equation}

where the last inequality follows from \eqref{limit-radii} and the fact that by \eqref{grundabschätzung}, we have
\begin{equation*}
\left( \frac{a_{\eps,i}}{d_{\eps,i}} \right)^{p-1}d_{\eps,i}^{2-p} \lesssim  \frac{\eps^{(1+2\delta)(p-1)}}{\eps^{(\delta+1)(p-1)}}\eps^{2-p} 
\end{equation*}
for $p>2$. Combining \eqref{allaire1-1} with \eqref{allaire1-2} we find \eqref{allaire1}. To prove \eqref{allaire3}, we observe that by rescaling and \eqref{w4} we get
	\begin{align*}
		&\| \Grad\oomega_k^{\eps,g}\|_{L^p(\bigcup_{z_i \in n^\eps} B_{\frac{1}{2}d_{\eps,i}}(\eps z_i)\setminus B_{\frac{1}{4}d_{\eps,i}}(\eps z_i); \RR^{3 \times 3})}^p+\| q_k^{\eps,g}\|_{L^p(\bigcup_{z_i \in n^\eps} B_{\frac{1}{2}d_{\eps,i}}(\eps z_i)\setminus B_{\frac{1}{4}d_{\eps,i}}(\eps z_i))}^p\\
		&\lesssim \sum_{z_i\in n^\eps} a_{\eps,i}^{3-p}\Big(\| \Grad\oomega_k\|_{L^p(B_{\frac{1}{2}d_{\eps,i}a_{\eps,i}^{-1}}\setminus B_{\frac{1}{4}d_{\eps,i}a_{\eps,i}^{-1}}; \RR^{3 \times 3})}^p +\| q_k\|_{L^p(B_{\frac{1}{2}d_{\eps,i}a_{\eps,i}^{-1}}\setminus B_{\frac{1}{4}d_{\eps,i}a_{\eps,i}^{-1}})}^p\Big)\\
		&\lesssim 
		\sum_{z_i\in n^\eps} a_{\eps,i}^{3-p}\left( \frac{d_{\eps,i}}{a_{\eps,i}} \right)^{3-2p}
		\lesssim \sum_{z_i\in n^\eps} a_{\eps,i}\left( \frac{a_{\eps,i}}{d_{\eps,i}} \right)^{p-1}d_{\eps,i}^{2-p}\\
		&\lesssim \left\{ \begin{array}{ll}
			\eps^{\delta (p-1)}\eps^{2-p} & \text{if } p\leq 2 \\
			\eps^{\delta (p-1)}\eps^{-(1+\delta)(p-2)} & \text{if } p>2
		\end{array} .\right.
	\end{align*}
	This together with \eqref{allaire1-2} and $$\bigcup_{z_i\in n^\eps} A_i=\Big(\bigcup_{z_i\in n^\eps} D_i\Big)\cup \Big(\bigcup_{z_i\in n^\eps} B_{\frac{1}{2}d_{\eps,i}}(\eps z_i)\setminus B_{\frac{1}{4}d_{\eps,i}}(\eps z_i)\Big)$$ implies
	\begin{align*}
		\| \Grad\oomega_k^{\eps,g}\|_{L^p(\cup_{z_i\in n^\eps} A_i; \RR^{3 \times 3})}^p+\| q_k^{\eps,g}\|_{L^p(\cup_{z_i\in n^\eps} A_i)}^p
		\lesssim \left\{ \begin{array}{ll}
			\eps^{\delta (p-1)}\eps^{2-p} & \text{if } p\leq 2 \\
			\eps^{\delta (p-1)}\eps^{-(1+\delta)(p-2)} & \text{if } p>2
		\end{array} .\right.
	\end{align*}
\end{proof}
	
	\begin{defin}\label{def:Nfunctions}
 For $N\in\mathbb N$	we define
		\begin{align*}
			n_N^\eps:=\left\{ z_i\in n^\eps\mid d_{\eps,i}\geq \frac{\eps}{N} \right\},\qquad
			r_{i,N}:=\min\{ r_{i},N \},\qquad
			\cR_N:=\{ r_{i, N} \}_{z_i\in n_N^\eps},
		\end{align*}
		and we let $(\oomega_{k,N}^{\eps,g},q_{k,N}^{\eps,g})$ be the analogue of $(\oomega_k^{\eps,g},q_k^{\eps,g})$ when $n^\eps$ is replaced by $n_N^\eps$.
	\end{defin}
	Note that by definition we have for any $z_i \in n_N^\eps$ that
	\begin{align}\label{x1}
		a_{\eps,i}\leq N\eps^{3} \quad\text{and}\quad
		d_{\eps,i}^{-1}\leq \frac{N}{\eps}.
	\end{align}
\begin{rmk}\label{AllaireFunk2}
	From $n_N^\eps\subset n^\eps$ and the definition of $(\oomega_{k,N}^{\eps,g},q_{k,N}^{\eps,g})$ and $(\oomega_k^{\eps,g},q_k^{\eps,g})$, it readily follows that for all $p>\frac{3}{2}$ we have uniformly in $N$
\begin{align}
	\| \Grad q_{k,N}^{\eps,g}\|_{L^p(\cup_{z_i \in n_N^\eps} C_i; \RR^3)} \lesssim \eps^{6(\frac{1}{p}-1)},\quad \| q_{k,N}^{\eps,g}\|_{L^p(\cup_{z_i \in n_N^\eps} C_i)} &\lesssim  \left\{ \begin{array}{ll}
		\eps^{(1+2\delta)(2-p)}  & \text{if } p\leq 2 \\
		\eps^{3(2-p)}  & \text{if } p>2
	\end{array} .\right.\label{allaire5} \\
	\| \Grad\oomega_{k,N}^{\eps,g}\|_{L^p(\cup_{z_i \in n_N^\eps} A_i; \RR^{3 \times 3})} + \| q_{k,N}^{\eps,g}\|_{L^p(\cup_{z_i \in n_N^\eps} A_i)} &\lesssim \left\{ \begin{array}{ll}
		\eps^{\delta (1-\frac{1}{p})}\eps^{\frac{2}{p}-1} & \text{if } p\leq 2 ,\\
		\eps^{\delta (1-\frac{1}{p})}\eps^{-(1+\delta)(1-\frac{2}{p})} & \text{if } p>2.
	\end{array} \right.\label{allaire6} \\
	\| \Grad\oomega_{k,N}^{\eps,g}\|_{L^p(\Omega; \RR^{3 \times 3})} + \| q_{k,N}^{\eps,g}\|_{L^p(\Omega)}
	&\lesssim \left\{ \begin{array}{ll}
			(\eps^{\delta (1-\frac{1}{p})}+\eps^{2\delta(\frac{2}{p}-1)})\eps^{\frac{2}{p}-1} & \text{if } p<2 ,\\[1em]
			\eps^{\delta (1-\frac{1}{p})}\eps^{-(1+\delta)(1-\frac{2}{p})}+\eps^{3(\frac{2}{p}-1)} & \text{if } p \geq 2.
		\end{array} \right.\label{allaire7}
\end{align}
\end{rmk}
    Before we show that the functions $(\oomega_k^{\eps, g})_{1 \leq k \leq 3}$ from Lemma~\ref{AllaireFunk} satisfy \ref{H5}, we need some preliminary lemmas, which we collect in the following:
	\begin{lem}\label{reihen}
		Let $c\in(0,1]$ be a fixed real number, and $\delta_i^{c d_{\eps,i}}$ be the measure concentrated on the sphere $\partial B_{c d_{\eps,i}}(\eps z_i)$, that is, $\delta_i^{cd_{\eps,i}}=  \mathcal H^2\res\partial B_{cd_{\eps,i}}(\eps z_i)$. Let also $\sigma_3$ be the area of the unit sphere in $\RR^3$. Then, almost surely,
		\begin{align}
			\sum_{z_i\in n^\eps_N}(cd_{\eps,i})^{-2}a_{\eps,i} \delta_i^{cd_{\eps,i}} &\to \sigma_3\langle \mathcal N_{\frac{2}{N}}( \Omega)\rangle   \langle r_N \rangle \quad\text{strongly in } W^{-1,2}(\Omega), \label{series1} \\
            \sum_{z_i\in n^\eps_N}(cd_{\eps,i})^{-2}a_{\eps,i} \delta_i^{cd_{\eps,i}}(\ee_k\cdot \ee^i)\ee^i &\to \frac{\sigma_3}{3}\langle \mathcal N_{\frac{2}{N}}(\Omega)\rangle   \langle r_N \rangle\ee_k\quad\text{strongly in } W^{-1,2}(\Omega), \label{series2}
		\end{align}
	%
	where $\ee^i\colon x\mapsto\frac{x-\eps z_i}{|x-\eps z_i|}$. 
	\end{lem}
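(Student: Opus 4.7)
}
Denote by $\nu_\eps$ the measure on the left-hand side of \eqref{series1}. The normalisation by $(cd_{\eps,i})^{-2}$ is chosen so that each spherical component has total mass $\sigma_3 a_{\eps,i}$; the total variation of $\nu_\eps$ therefore equals $\sigma_3\eps^3\sum_{z_i\in n_N^\eps}r_i$ and is almost surely uniformly bounded by Lemma~\ref{SLLN}. My first step is to identify the weak-$\ast$ limit of $\nu_\eps$ in $\mathcal{M}(\Omega)$: for any $\phi\in C_c^\infty(\Omega)$, the Taylor expansion of $\phi$ around $\eps z_i$ (valid since $cd_{\eps,i}\le \eps$) gives
\begin{equation*}
\int_{\partial B_{cd_{\eps,i}}(\eps z_i)}\phi\,d\mathcal{H}^2 \;=\; \sigma_3(cd_{\eps,i})^2\phi(\eps z_i)+O\bigl((cd_{\eps,i})^4\|\nabla^2\phi\|_\infty\bigr).
\end{equation*}
Summing, the remainder collapses to $o(1)$ thanks to the mass bound, so that
\begin{equation*}
\langle\nu_\eps,\phi\rangle \;=\; \sigma_3\eps^3\sum_{z_i\in n_N^\eps}r_i\,\phi(\eps z_i)+o(1).
\end{equation*}
By Lemma~\ref{giunti1}, $n_N^\eps$ coincides with the thinned Poisson process $\Phi^\eps_{2/N}(\Omega)$ up to a vanishing-density discrepancy, so the marked strong law of large numbers (Lemma~\ref{SLLN}) identifies the weak-$\ast$ limit with the constant on the right-hand side of \eqref{series1}. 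The vector-valued version \eqref{series2} follows from the same computation together with the isotropy identity
\begin{equation*}
\int_{\partial B_R(y)}\frac{(x-y)_k(x-y)_j}{R^2}\,d\mathcal{H}^2(x)\;=\;\frac{\sigma_3 R^2}{3}\delta_{kj},
\end{equation*}
which generates the prefactor $\sigma_3/3$.

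The principal obstacle is upgrading this weak-$\ast$ convergence to \emph{strong} convergence in $W^{-1,2}(\Omega)$, since a generic bounded family in $\mathcal{M}(\Omega)$ embeds strongly only into $W^{-1,p}(\Omega)$ with $p<3/2$. I would overcome this through a corrector-potential construction exploiting the geometry of the cells. Decomposing
\begin{equation*}
\nu_\eps \;=\; \nu_\eps^{\rm bulk}+\bigl(\nu_\eps-\nu_\eps^{\rm bulk}\bigr),\qquad \nu_\eps^{\rm bulk}:=\sum_{z_i\in n_N^\eps}\frac{\sigma_3 a_{\eps,i}}{|B_{d_{\eps,i}}(\eps z_i)|}\chi_{B_{d_{\eps,i}}(\eps z_i)},
\end{equation*}
the bulk part $\nu_\eps^{\rm bulk}$ has the same total mass per cell as $\nu_\eps$ and is uniformly bounded on each cell; another application of Lemma~\ref{SLLN} yields its strong convergence in $L^q(\Omega)$ for every $q<\infty$, and hence in $W^{-1,2}(\Omega)$. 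The residual $\nu_\eps-\nu_\eps^{\rm bulk}=\sum_i \mu_i$ has each summand $\mu_i$ supported in $B_{d_{\eps,i}}(\eps z_i)$ and of zero total mass, and the balls $B_{d_{\eps,i}}(\eps z_i)$ are pairwise disjoint by the very definition of $d_{\eps,i}$. Solving the Poisson problem $-\Delta\Pi_{\eps,i}=\mu_i$ in $W_0^{1,2}(B_{d_{\eps,i}}(\eps z_i))$, the disjointness gives $\|\nabla\Pi_\eps\|_{L^2(\Omega)}^2=\sum_i\|\nabla\Pi_{\eps,i}\|_{L^2}^2$, and the explicit Newtonian potential of the spherical measure together with standard elliptic estimates yields the local bound $\|\nabla\Pi_{\eps,i}\|_{L^2}^2\lesssim a_{\eps,i}^2/d_{\eps,i}$. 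Using $a_{\eps,i}\le\eps^{1+2\delta}$ and $d_{\eps,i}\ge \eps/N$ from Lemma~\ref{giunti1}, the full sum is controlled by $\eps^{2\delta}N\sum_i a_{\eps,i}$, which tends to $0$ by Lemma~\ref{SLLN}. This yields $\nu_\eps-\nu_\eps^{\rm bulk}\to 0$ strongly in $W^{-1,2}(\Omega)$, which combined with the previous step proves \eqref{series1}. The vector-valued variant \eqref{series2} is handled by applying the same decomposition componentwise, the isotropy identity above accounting for the change of prefactor; the strong-convergence argument is otherwise unchanged.
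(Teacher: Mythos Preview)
Your strategy is the classical Cioranescu--Murat/Allaire corrector scheme and coincides with the paper's: write the spherical measures as a bounded volumetric density plus a mean-zero residual which is the distributional Laplacian of a potential whose gradient vanishes. The paper carries this out with explicit radial Neumann solutions $p_i^\eps$ (and, for \eqref{series2}, an auxiliary corrector $\mathbf{t}_{i,k}^\eps$) and obtains convergence in the stronger space $W^{-1,\infty}(\Omega)$; your abstract Dirichlet formulation $-\Delta\Pi_{\eps,i}=\mu_i$ in $W_0^{1,2}(B_{d_{\eps,i}})$ is a legitimate and slightly cleaner variant, and your cell estimate $\|\nabla\Pi_{\eps,i}\|_{L^2}^2\lesssim a_{\eps,i}^2/d_{\eps,i}$ is exactly what the explicit computation yields.

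There is, however, one step that does not go through as written. You claim that $\nu_\eps^{\rm bulk}$ converges \emph{strongly in $L^q(\Omega)$} for every $q<\infty$; this is false. The balls $B_{d_{\eps,i}}(\eps z_i)$ need not cover $\Omega$ (their total volume is at most $|\Omega|$ and can be as small as $N^{-3}|\Omega|$), so $\nu_\eps^{\rm bulk}$ vanishes on a set of positive measure while its weak-$\ast$ limit is a positive constant; no strong $L^q$ convergence can hold. The fix is standard and is what the paper uses: the uniform $L^\infty$ bound together with the weak-$\ast$ limit you correctly identified gives weak convergence in $L^2(\Omega)$, and the compact embedding $L^2(\Omega)\hookrightarrow W^{-1,2}(\Omega)$ upgrades this to strong convergence in $W^{-1,2}(\Omega)$. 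With that correction your argument is complete.
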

	\begin{proof}
    To prove Lemma \ref{reihen} we argue as in \cite[Lemma 2.3]{CioranescuMurat1982} and \cite[Lemma II.3.5]{Allaire1989PhD}.
    
		\item\paragraph{Step 1: Proof of \eqref{series1}.}
	Let $p_i^\eps\colon\overline{B_{cd_{\eps,i}}(\eps z_i)}\to\RR$ be the solution of
		\begin{align*}
			\left\{ \begin{array}{ll}
				-\Delta p_i^\eps =-3(cd_{\eps,i})^{-3}a_{\eps,i} & \text{ in  }B_{cd_{\eps,i}}(\eps z_i), \ z_i\in n^\eps ,\\[1em]
				\dfrac{\partial p_i^\eps}{\partial n} = (cd_{\eps,i})^{-2}a_{\eps,i}& \text{ on }\partial B_{cd_{\eps,i}}(\eps z_i).
			\end{array} \right.
		\end{align*}
		Then  $p^\eps$  satisfies
		\begin{align*}
			\int_{B_{cd_{\eps,i}}(\eps z_i)}-\Delta p_i^\eps \dx =-a_{\eps,i} \sigma_3 =- \int_{\partial B_{cd_{\eps,i}}(\eps z_i)}\frac{\partial p_i^\eps}{\partial n} {\rm d}\mathcal H^2.
		\end{align*}
		Hence, there exists a unique solution with $p^\eps=0$ on $\partial B_{cd_{\eps,i}}(\eps z_i)$, given by
		\begin{equation*}
			p_i^\eps(x)=3(cd_{\eps,i})^{-3} a_{\eps,i} \left(\frac{s^2}{2}- \frac{c^2 d^2_{\eps,i}}{2}\right)\quad \text{ for }x\in \overline{B_{cd_{\eps,i}}(\eps z_i)},
		\end{equation*}
		where $s=|x-\eps z_i|$.
		We have 
		\begin{align}\label{star1}
			\frac{\partial p_i^\eps}{\partial s}= 3(cd_{\eps,i})^{-3} a_{\eps,i} s \quad \text{in }B_{cd_{\eps,i}}(\eps z_i).
		\end{align}
	We define the function $p_N^\eps \colon \Omega\to\RR$ via 
	\begin{equation*}
		p_N^\eps(x):=\begin{cases}
p_i^\eps (x) & \text{ if } x\in B_{cd_{\eps,i}}(\eps z_i), z_i \in n^\eps_N ,\\
0& \text{ if } x\in \Omega\setminus \bigcup_{z_i \in n^\eps_N}B_{cd_{\eps,i}}(\eps z_i).
		\end{cases}
	\end{equation*}
Hence, from \eqref{x1}, \eqref{star1}, and the definition of $n_N^\eps$ and $r_{i, N}$ it follows that
		\begin{align*}
			|\Grad p_N^\eps|\leq 3C  (cd_{\eps,i})^{-2}a_{\eps,i} \leq C_N \eps.
		\end{align*}
As a consequence we have for any fixed $N \in \NN$
		\begin{align}\nonumber
			p_N^\eps\to 0 \quad&\text{strongly in }W^{1,\infty}(\Omega),\\\label{n10}
   \Delta p_N^\eps\to 0 \quad&\text{strongly in }W^{-1,\infty}(\Omega).
		\end{align}
		By definition we have in the sense of $W^{-1,\infty}(\Omega)$
		\begin{align*}
			-\Delta p_N^\eps = - \sum_{z_i\in n_N^\eps}3c^{-3}d_{\eps,i}^{-3} a_{\eps,i} \chi_{B_{cd_{\eps,i}}(\eps z_i)} -\sum_{z_i\in n_N^\eps}c^{-2}d_{\eps,i}^{-2} a_{\eps,i} \delta^{cd_{\eps,i}}.
		\end{align*}    
Let now
		\begin{align*}
			\eta_N^\eps&=\sum_{z_i\in n_N^\eps}c^{-3}d_{\eps,i}^{-3}\eps^3 r_{i, N} \chi_{B_{cd_{\eps,i}}(\eps z_i)},\\
			\tilde\eta_N^\eps&=\sum_{z_i\in \Phi_{\frac2N}^\eps}c^{-3}d_{\eps,i}^{-3}\eps^3 r_{i, N} \chi_{B_{cd_{\eps,i}}(\eps z_i)}.
		\end{align*}
		By \cite[Lemma 5.3]{GiuntiHoeferVelazquez2018}, it follows that for every $\xi\in C^1_0(\Omega)$ almost surely
		\begin{align*}
		\lim_{\eps\to 0}\int_\Omega \tilde\eta^\eps_N \xi\dx
&	= \lim_{\eps\to 0} \sum_{z_i\in \Phi_{\frac{2}{N}}^\eps}\left(\frac{\eps}{cd_{\eps,i}} \right)^3 r_{i, N} \int_{B_{cd_{\eps,i}}(\eps z_i)}\xi\dx\\
&	=\frac{\sigma_3}{3}\langle \mathcal N_{\frac{2}{N}}(\Omega)\rangle   \langle r_N \rangle \int_{\Omega}\xi\dx.
		\end{align*}
		Thus, we proved that 
		\begin{align}\label{n1}
			\tilde\eta_N^\eps ~\xrightharpoonup{\ast}~ \frac{\sigma_3}{3}\langle \mathcal N_{\frac{2}{N}}(\Omega)\rangle   \langle r_N \rangle \quad \text{in } L^\infty(\Omega)\text{ almost surely.}
		\end{align}
		Furthermore,
		\begin{align}\label{n2}
			\tilde\eta_N^\eps-\eta_N^\eps ~\xrightharpoonup{\ast}~ 0 \quad \text{in } L^\infty(\Omega)\text{ almost surely.}
		\end{align}
		Indeed, we have $n_N^\eps\subset \Phi_{\frac{2}{N}}^\eps$ and $\dist(\eps z_i,\Omega_b^\eps)<\frac{\eps}{N}$ for each $z_i\in \Phi_{\frac{2}{N}}^\eps\setminus n_N^\eps$. Let $\xi\in C_0^1(\Omega)$, then we get
		\begin{align*}
			\left| \int_\Omega (\tilde\eta_N^\eps-\eta_N^\eps) \xi \dx\right|
			&\lesssim \sum_{z_i\in \Phi_{\frac{2}{N}}^\eps\setminus n_N^\eps}c^{-3}d_{\eps,i}^{-3}\eps^3 r_{i, N} \int_{B_{cd_{\eps,i}}(\eps z_i)}\xi\dx
			\lesssim \sum_{z_i\in \Phi_{\frac{2}{N}}^\eps\setminus n_N^\eps}\| \xi\|_{L^\infty(\Omega)}\eps^3 N\\
			&\lesssim N\eps^3 \#\{z_i\in\Phi_{\frac{2}{N}}^\eps\mid \dist(\eps z_i,\Omega_b^\eps)\leq \frac{\eps}{N}\}.
		\end{align*}
		Lemma~3.1 in \cite{GiuntiHoefer2019} implies that the right-hand side vanishes as $\eps\to 0$. By 
		\eqref{n1} and \eqref{n2}, we deduce
		\begin{align}\label{n3}
			\eta_N^\eps ~\xrightharpoonup{\ast}~ \frac{\sigma_3}{3}\langle \mathcal N_{\frac{2}{N}}(\Omega)\rangle   \langle r_N \rangle \quad \text{in } L^\infty(\Omega)\text{ almost surely and hence strongly in } W^{-1,\infty}(\Omega).
		\end{align}
		By the strong convergences in \eqref{n10}, we get for fixed $N\in\NN$
		\begin{align*}
			\sum_{z_i\in n_N^\eps}c^{-2}d_{\eps,i}^{-2}a_{\eps,i}\delta^{cd_{\eps,i}} ~\to ~\sigma_3\langle\mathcal N_{\frac{2}{N}}(\Omega)\rangle   \langle r_N\rangle 
			\quad \text{in } W^{-1,\infty}(\Omega)\text{ strongly almost surely,}
		\end{align*}
        which shows \eqref{series1}.\\

\item\paragraph{Step 2: Proof of \eqref{series2}.} 
	Let $\p_{i,k}^\eps:\overline{B_{cd_{\eps,i}}(\eps z_i)}\to\RR^3$ be the solution of
		\begin{align*}
			\left\{ \begin{array}{ll}
				-\Delta \p_{i,k}^\eps =-(cd_{\eps,i})^{-3}a_{\eps,i}\ee_k & \text{ in  }B_{cd_{\eps,i}}(\eps z_i),\\[1em]
				\dfrac{\partial \p_{i,k}^\eps}{\partial n} = (cd_{\eps,i})^{-2}a_{\eps,i}(\ee_k\cdot\ee^i)\ee^i& \text{ on }\partial B_{cd_{\eps,i}}(\eps z_i).
			\end{array} \right.
		\end{align*}
		Then $\p_{i,k}^\eps$ satisfies
		\begin{align*}
			\int_{B_{cd_{\eps,i}}(\eps z_i)}-\Delta \p_{i,k}^\eps \dx =-a_{\eps,i}\frac{\sigma_3}{3}\ee_k =- \int_{\partial B_{cd_{\eps,i}}(\eps z_i)}\frac{\partial \p_{i,k}^\eps}{\partial n}\ds.
		\end{align*}
		Thus, there exists a unique solution, up to an additive constant, given by
		\begin{align*}        
			\p_{i,k}^\eps(x)=\frac{1}{2}(cd_{\eps,i})^{-3}a_{\eps,i}(x-\eps z_i)_kr\ee^i(x)=\frac{1}{2}(cd_{\eps,i})^{-3}a_{\eps,i} s^2 (\ee_k\cdot\ee^i) \ee^i
			\quad \text{ for } x\in \overline{B_{cd_{\eps,i}}(\eps z_i)},
		\end{align*}
where $s=|x-\eps z_i|$. We get
		\begin{align*}
			\frac{\partial \p_{i,k}^\eps}{\partial s}=(cd_{\eps,i})^{-3}a_{\eps,i} s (\ee_k\cdot \ee^i)\ee^i=(cd_{\eps,i})^{-2}a_{\eps,i}(\ee_k\cdot \ee^i)\ee^i\quad \text{ on }\partial B_{cd_{\eps,i}}(\eps z_i).
		\end{align*}

        Next, we are searching a function $\ss_{i,k}^\eps:B_{\eps d_{\eps,i}}(\eps z_i)\to \RR^3$ satisfying
		\begin{align}\label{n8}
			\begin{array}{cc}
				\p_{i,k}^\eps=\ss_{i,k}^\eps  & \text{on }\partial B_{cd_{\eps,i}}(\eps z_i), \\[1em]
				\dfrac{\partial \ss_{i,k}^\eps}{\partial n} =0 & \text{on }\partial B_{cd_{\eps,i}}(\eps z_i).
			\end{array}
		\end{align}
        
		The unique solution of this system is given by
		\begin{align*}
			\ss_{i,k}^\eps=(cd_{\eps,i})^{-3}a_{\eps,i}\frac{s^2(3cd_{\eps,i}-2s)}{2cd_{\eps,i}}(\ee_k\cdot \ee^i)\ee^i.
		\end{align*}
		
		For its Laplacian we compute
		\begin{align*}
			\Delta \ss_{i,k}^\eps=((3cd_{\eps,i}-2s)\ee_k-6s(\ee_k\cdot\ee^i)\ee^i)c^{-4}d_{\eps,i}^{-4}a_{\eps,i}.
		\end{align*}
		Hence, from \eqref{x1} we conclude
		\begin{align}\label{n9}
			|\Delta \ss_{i,k}^\eps|\leq 9c^{-3}d_{\eps,i}^{-3}a_{\eps,i}\lesssim C_N.
		\end{align}  
	Let $\rr_{N,k}^\eps:\Omega\to \RR^3$ be given by
		\begin{align*}
			\rr_{N,k}^\eps:=
			\left\{ \begin{array}{ll}
				\p_{i,k}^\eps-\ss_{i,k}^\eps & \text{in }  B_{cd_{\eps,i}}(\eps z_i), \ z_i \in n_N^\eps, \\[1em]
				0 & \text{in }\Omega\setminus \bigcup_{i\in n^\eps_N} B_{cd_{\eps,i}}(\eps z_i),
			\end{array}\right.
		\end{align*}
in other words, 
		\begin{align*}                              
			\rr_{N,k}^\eps(x) = s^2c^{-3}d_{\eps,i}^{-3}a_{\eps,i}\left( \frac{r}{cd_{\eps,i}}-1 \right)(\ee_k\cdot \ee^i)\ee^i
			\quad\text{in } B_{cd_{\eps,i}}(\eps z_i).
		\end{align*}
		Using \eqref{x1} we get
		\begin{align*}
			|\Grad \rr_{N,k}^\eps|\lesssim d_{\eps,i}^{-2}a_{\eps,i}\lesssim C_N \eps.
		\end{align*}
As a consequence we have for any fixed $N\in \NN$
		\begin{equation}\label{n4}
			\begin{split}
		\rr_{N,k}^\eps&\to 0 \quad\text{strongly in }W^{1,\infty}(\Omega;\RR^3), \\
	\Delta\rr_{N,k}^\eps&\to 0 \quad\text{strongly in }W^{-1,\infty}(\Omega;\RR^3).
			\end{split}
		\end{equation}
		Moreover, by definition we find in the sense of $W^{-1, \infty}(\Omega; \RR^3)$
		\begin{align}\label{n7}
			-\Delta\rr_{N,k}^\eps=
			-\eta_N^\eps\ee_k 
			-\sum_{z_i\in n_N^\eps}\chi_{B_{cd_{\eps,i}}(\eps z_i)}^\eps\Delta \ss_{i,k}^\eps+\sum_{z_i\in n_N^\eps}c^{-2}d_{\eps,i}^{-2}a_{\eps,i}\delta^{cd_{\eps,i}}(\ee_k\cdot \ee^i)\ee^i.
		\end{align} 
		With \eqref{n8} and \eqref{n9}, we have $\sum_{z_i\in n_N^\eps}\chi_{B_{cd_{\eps,i}}(\eps z_i)}^\eps\Delta \ss_{i,k}^\eps\in L^\infty(\Omega;\RR^3)$ uniformly in $\eps > 0$ and
		\begin{align*}
			\int_\Omega \sum_{z_i\in n_N^\eps}\chi_{B_{cd_{\eps,i}}(\eps z_i)}^\eps\Delta \ss_{i,k}^\eps\dx
			=\sum_{z_i\in n_N^\eps}\int_{B_{cd_{\eps,i}}(\eps z_i)} \Delta \ss_{i,k}^\eps\dx
			=\sum_{z_i\in n_N^\eps}\int_{\partial B_{cd_{\eps,i}}(\eps z_i)}\frac{\partial \ss_{i,k}^\eps}{\partial n}=0.
		\end{align*}
		Thus,
		\begin{align}\label{n5}
			\sum_{z_i\in n_N^\eps}\chi_{B_{cd_{\eps,i}}(\eps z_i)}^\eps\Delta \ss_{i,k}^\eps& ~\xrightharpoonup{\ast}~0\quad\text{in }L^\infty(\Omega;\RR^3) \text{ almost surely},\nonumber\\
			\sum_{z_i\in n_N^\eps}\chi_{B_{cd_{\eps,i}}(\eps z_i)}^\eps\Delta \ss_{i,k}^\eps&\to 0\quad\text{strongly in } W^{-1,\infty}(\Omega;\RR^3).
		\end{align}
		We conclude \eqref{series2} with \eqref{n3}, \eqref{n4}, \eqref{n7}, and \eqref{n5} since a.s.
		\begin{align*}
			\sum_{z_i\in n_N^\eps}c^{-2}d_{\eps,i}^{-2}a_{\eps,i}\delta^{cd_{\eps,i}}(\ee_k\cdot \ee^i)\ee^i~&\to~\frac{\sigma_3}{3}\langle \cN_{\frac{2}{N}}(\Omega)\rangle   \langle r_N\rangle\ee_k\quad\text{strongly in }W^{-1,\infty}(\Omega;\RR^3).
		\end{align*}
		\end{proof}
\begin{lem}\label{divergenzterm}
	Assume that ${\rm Ma}(\eps)$ obeys \eqref{MaFinal}.  
	Let $\vv_\eps \in L^2(0,T;W_0^{1,2}(\Omega_\eps;\RR^3))$ be such that \eqref{v1}--\eqref{v4} hold for some $\vv \in L^2(0,T;W_0^{1,2}(\Omega;\RR^3))$.
	Then, for every $N\in\NN$ and every $\psi \in C_c^\infty((0,T)\times \Omega)$, we have 
	\begin{align*}
	\lim_{\eps\to 0}	\int_0^\tau\int_\Omega q_{k,N}^{\eps,g} \psi  \div(\vv_\eps)\dx\dt=0\quad\text{ for a.e. } \tau\in [0,T].
	\end{align*}
\end{lem}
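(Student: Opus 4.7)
Using the decomposition \eqref{v2}, write $\div\vv_\eps=\partial_t g_\eps+\div\hh_\eps$ and split
\[
\int_0^\tau\!\int_\Omega q_{k,N}^{\eps,g}\,\psi\,\div\vv_\eps\dx\dt = \int_0^\tau\!\int_\Omega q_{k,N}^{\eps,g}\,\psi\,\partial_t g_\eps\dx\dt + \int_0^\tau\!\int_\Omega q_{k,N}^{\eps,g}\,\psi\,\div\hh_\eps\dx\dt =: J_1+J_2.
\]
We show $J_1,J_2\to 0$ by moving the derivative off $g_\eps$ and $\hh_\eps$ and then estimating with the $L^p$-bounds on $q_{k,N}^{\eps,g}$ and $\Grad q_{k,N}^{\eps,g}$ from Remark~\ref{AllaireFunk2}, combined with the smallness of $g_\eps$, $\hh_\eps$ from \eqref{v2}--\eqref{v4}. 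The Mach-number assumption \eqref{MaFinal} will absorb the negative powers of $\eps$ coming from the Allaire-type pressure estimates.

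\textbf{Estimate of $J_2$.} Integration by parts in $x$ (using that $q_{k,N}^{\eps,g}\in W^{1,p}$ on the support of $\psi$, and that $\hh_\eps$ has zero trace on $\partial\Omega_\eps$) gives
\[
J_2 = -\int_0^\tau\!\int_\Omega \psi\,\Grad q_{k,N}^{\eps,g}\cdot\hh_\eps\dx\dt - \int_0^\tau\!\int_\Omega q_{k,N}^{\eps,g}\,\hh_\eps\cdot\Grad\psi\dx\dt.
\]
By H\"older with conjugate exponents $\tfrac{6\gamma}{\gamma+6}$ and $r':=\tfrac{6\gamma}{5\gamma-6}$, using $\|\hh_\eps\|_{L^2(0,T;L^{6\gamma/(\gamma+6)})}\lesssim \Ma(\eps)^{\min\{1,2/\gamma\}}$, the task reduces to bounding $\|\Grad q_{k,N}^{\eps,g}\|_{L^{r'}(\Omega)}$ and $\|q_{k,N}^{\eps,g}\|_{L^{r'}(\Omega)}$ by a negative power of $\eps$ strictly milder than $\eps^{-p(\delta,\gamma)}$. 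On $\cup_i C_i$ this uses \eqref{allaire5} for the gradient and \eqref{allaire7} for the pressure itself; on $\cup_i D_i$ one appeals to Stokes regularity for the auxiliary problems defining $\tilde q_{k,N}^{\eps,g}$, which gives the same order of estimate as the interior part. The product of these with $\Ma(\eps)^{\min\{1,2/\gamma\}}$ vanishes by \eqref{MaFinal}.

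\textbf{Estimate of $J_1$.} Since $\psi\in C_c^\infty((0,T)\times\Omega)$, integration by parts in $t$ (no boundary terms) yields
\[
J_1 = -\int_0^\tau\!\int_\Omega q_{k,N}^{\eps,g}\,(\partial_t\psi)\,g_\eps\dx\dt.
\]
Decompose $g_\eps=g_\eps\chi_{M_\eps}+g_\eps(1-\chi_{M_\eps})$. By \eqref{v3} the first piece is bounded in $L^\infty((0,T)\times\Omega)$, so it is controlled in absolute value by $\|\partial_t\psi\|_\infty\,T\,\|q_{k,N}^{\eps,g}\|_{L^1(\Omega)}$; by \eqref{allaire7} with $p=1$ (or by H\"older from any $p>1$), $\|q_{k,N}^{\eps,g}\|_{L^1(\Omega)}$ vanishes as $\eps\to 0$. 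By \eqref{v4}, the second piece is of order $\Ma(\eps)^{2/\gamma}$ in $L^\infty(0,T;L^\gamma(\Omega))$, so by H\"older with conjugate exponents $\gamma$ and $\gamma/(\gamma-1)$ and the bound on $\|q_{k,N}^{\eps,g}\|_{L^{\gamma/(\gamma-1)}(\Omega)}$ from \eqref{allaire7}, the contribution is at most $\Ma(\eps)^{2/\gamma}\,\eps^{-\alpha}$ for some $\alpha\le p(\delta,\gamma)$, which again goes to zero by \eqref{MaFinal}.

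\textbf{Main obstacle.} The only nontrivial point is the careful exponent bookkeeping: one must check that in each of the H\"older pairings above the exponent of $\eps$ appearing through \eqref{allaire5}--\eqref{allaire7} is strictly larger than $-p(\delta,\gamma)$, distinguishing whether $r'$ and $\gamma/(\gamma-1)$ fall on the $p<2$ or $p\ge 2$ side of Remark~\ref{AllaireFunk2}. The three competing terms in $p(\delta,\gamma)=\max\{3,\,1+6/\gamma,\,3(6-\gamma)/(2\gamma-3)\}$ correspond precisely to the worst-case powers from (i) the $L^1$-pressure bound in $J_1$, (ii) the residual $L^{\gamma/(\gamma-1)}$-pressure bound in $J_1$, and (iii) the gradient pressure bound $\eps^{6(1/r'-1)}$ in $J_2$; verifying these identifications concludes the proof.
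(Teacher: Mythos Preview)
Your overall strategy—integrate by parts in $t$ for the $g_\eps$-piece and in $x$ for the $\hh_\eps$-piece, then use the $L^p$-bounds on the Allaire pressure—is the same as the paper's. But there are two genuine gaps, one minor and one more structural.

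\textbf{Boundary term in $J_1$.} You claim that integration by parts in $t$ produces no boundary terms because $\psi\in C_c^\infty((0,T)\times\Omega)$. This is wrong: the time integral runs over $(0,\tau)$, not $(0,T)$, and for a.e.\ $\tau\in(0,T)$ one has $\psi(\tau,\cdot)\not\equiv 0$. The correct formula is
\[
J_1 = -\int_0^\tau\!\int_\Omega q_{k,N}^{\eps,g}\,(\partial_t\psi)\,g_\eps\dx\dt + \int_\Omega q_{k,N}^{\eps,g}\,\psi(\tau,\cdot)\,g_\eps(\tau,\cdot)\dx,
\]
and the second term must be estimated too. This is easily repaired (it has the same structure as the first term), but as written the argument is incomplete.

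\textbf{Missing control of $\Grad q_{k,N}^{\eps,g}$ on $D_i$.} Your $J_2$-estimate requires $\|\Grad q_{k,N}^{\eps,g}\|_{L^{r'}(\Omega)}$, but the bound \eqref{allaire5} covers only $\cup_i C_i$; on $\cup_i D_i$ the paper establishes $L^p$-control of $q$ and $\Grad\oomega$ (see \eqref{allaire1-2}), not of $\Grad q$. Your sentence ``Stokes regularity \ldots\ gives the same order of estimate as the interior part'' is precisely the missing work: standard Stokes estimates give the pressure in $L^p$, not $W^{1,p}$, up to the boundary of $D_i$, and there is no uniform-in-$\eps$ gradient bound available. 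The paper sidesteps this by introducing a cut-off $\xi_\eps$ supported in $\cup_i B_{1,i}$ (so that $q_{k,N}^{\eps,g}\xi_\eps$ is supported in $\cup_i C_i$, where \eqref{allaire5} applies), and handling the complementary piece $q_{k,N}^{\eps,g}(1-\xi_\eps)\div\vv_\eps$ directly via $\|q_{k,N}^{\eps,g}\|_{L^2(\cup_i A_i)}\lesssim\eps^{\delta/2}$ from \eqref{allaire6} paired with $\|\div\vv_\eps\|_{L^2}$—no integration by parts, hence no need for $\Grad q$ on the annuli. Without this localisation (or a separate gradient estimate on $D_i$ that you have not supplied), your $J_2$-bound does not go through.
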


\begin{proof}   
	Let $\xi\in C_c^\infty(\RR^3)$ be a cut-off function such that $\xi\equiv1$ in $B_{\frac14}$ and $\xi\equiv0$ in $\RR^3\setminus {B_{\frac12}}$.
	Then, we define
	\begin{align}\label{cutofffunkrandom}
		\xi_\eps (x):= 	\left\{ \begin{array}{ll}
		\xi (\frac{x-\eps z_i}{d_{\eps,i}}) &\text{ in }B_{1,i}, \ z_i \in n_N^\eps, \\
		 0 & \text{ in }\Omega\setminus (\bigcup_{z_i\in n^\eps_N}B_{1,i}),
		\end{array} \right.
	\end{align}
	hence $|\Grad \xi_\eps| \lesssim d_{\eps,i}^{-1}$. By the assumed form of $\div \vv_\eps$ in \eqref{v1} we may write 
	\begin{align}\label{divterm}
		\int_0^\tau\int_\Omega q_{k,N}^{\eps,g} \div(\vv_\eps) \psi \dx\dt
		&= \int_0^\tau\int_\Omega q_{k,N}^{\eps,g} \xi_\eps \div(\vv_\eps) \psi\dx\dt +\int_0^\tau\int_\Omega q_{k,N}^{\eps,g} (1-\xi_\eps) \div( \vv_\eps )\psi \dx\dt\nonumber\\
		&=-\int_0^\tau\int_\Omega \Grad(q_{k,N}^{\eps,g} \xi_\eps \psi )\hh_\eps\dx\dt -\int_0^\tau\int_\Omega  q_{k,N}^{\eps,g} \xi_\eps  g_\eps\partial_t\psi  \dx\dt\\
        &\hphantom{=}+\int_\Omega q_{k,N}^{\eps,g}\xi_\eps g_\eps(\tau,\cdot)\psi(\tau,\cdot)\dx
        +\int_0^\tau\int_\Omega  q_{k,N}^{\eps,g} (1-\xi_\eps) \div( \vv_\eps )\psi\dx\dt. \nonumber
	\end{align}
Note that \eqref{MaFinal} in particular implies $\lim_{\eps\to 0}{\rm Ma}(\eps)=0$, hence from \eqref{v1} we have ${\| \div (\vv_\eps)\|_{L^2(0,T;L^2(\Omega))}\lesssim 1}$. This together with \eqref{allaire6} yields
	\begin{align*}
		\Bigg| \int_0^\tau\int_\Omega q_{k,N}^{\eps,g} (1-\xi_\eps) \div( \vv_\eps )\psi \dx\dt \Bigg| 
		&\lesssim \int_0^\tau\int_{\Omega\setminus \bigcup_{z_i\in n^\eps_N} B_{\frac{1}{4}d_{\eps,i}}(\eps z_i)}  |q_{k,N}^{\eps,g} ||\div( \vv_\eps )|\dx\dt\\
		&\lesssim \| q_{k,N}^{\eps,g}\|_{L^2(\bigcup_{z_i\in n^\eps_N} A_i)} \| \div (\vv_\eps)\|_{L^2(0,T;L^2(\Omega))}
		\lesssim \eps^{\frac{\delta}{2}},
	\end{align*}
	which in turn implies that the last term on the right-hand side of \eqref{divterm} vanishes as $\eps\to 0$.
    
    Since $\xi_\eps$ is bounded in $L^\infty(\Omega)$ and by \eqref{v3}, \eqref{v4}, we find for the second and third term in \eqref{divterm} and for $\gamma < 2$ 
	\begin{align*}
        \left|\int_0^\tau\int_\Omega\right.&  \left.\vphantom{\int_0^\tau\int_\Omega}  q_{k,N}^{\eps,g} \xi_\eps g_\eps\partial_t\psi\dx\dt\right|
		+\left|\int_\Omega q_{k,N}^{\eps,g}\xi_\eps g_\eps(\tau,\cdot)\psi(\tau,\cdot)\dx\right|\\
        &\lesssim \| \xi_\eps\|_{L^{\frac{7}{3}}(\Omega)} \| g_\eps \chi_{M_\eps} \|_{L^\infty((0,T)\times\Omega)} \| q_{k,N}^{\eps,g} \|_{L^{\frac{7}{4}}(\Omega)}
        + \| \xi_\eps \|_{L^\infty(\Omega)} \| g_\eps\chi_{M_\eps^c} \|_{L^\infty(0,T;L^\gamma(\Omega))} \| q_{k,N}^{\eps,g}\|_{L^\frac{\gamma}{\gamma-1}(\Omega)}\\
        &\lesssim \| q_{k,N}^{\eps,g}\|_{L^{\frac{7}{4}}(\Omega)}+\| q_{k,N}^{\eps,g}\|_{L^\frac{\gamma}{\gamma-1}(\Omega)}{\rm Ma}(\eps)^{\frac{2}{\gamma}} \\
        &\lesssim (\eps^{\frac{\delta}{\gamma}}\eps^{-\frac{(1+\delta)(2-\gamma)}{\gamma}}+\eps^{-\frac{3(2-\gamma)}{\gamma}}){\rm Ma}(\eps)^{\frac{2}{\gamma}} \lesssim \eps^{-\frac{3(2-\gamma)}{\gamma}} {\rm Ma}(\eps)^{\frac{2}{\gamma}},
	\end{align*}
	where $\frac{\gamma}{\gamma-1}>2>\frac{3}{2}$ for any $\frac32 < \gamma < 2$, and the last inequality comes from the fact that $(\delta - \delta (2-\gamma)) = \delta(\gamma - 1) > 0$. Consequently, seeing that also $-3 \gamma^{-1}(2-\gamma) \in (-1, 0)$ for any $\gamma \in (\frac32, 2)$, the right-hand side vanishes due to \eqref{MaFinal}.\\
    
 For $\gamma \geq 2$ we have
    \begin{align*}
       \left|\int_0^\tau\int_\Omega\right.&  \left.\vphantom{\int_0^\tau\int_\Omega}  q_{k,N}^{\eps,g} \xi_\eps g_\eps\partial_t\psi\dx\dt\right|
		+\left|\int_\Omega q_{k,N}^{\eps,g}\xi_\eps g_\eps(\tau,\cdot)\psi(\tau,\cdot)\dx\right|\\
		 &\lesssim \| \xi_\eps\|_{L^{\frac{7}{3}}(\Omega)}\| g_\eps\chi_{M_\eps}\|_{L^\infty((0,T)\times\Omega)}\| q_{k,N}^{\eps,g}\|_{L^{\frac{7}{4}}(\Omega)}
        +\| \xi_\eps\|_{L^{\frac{2\gamma}{\gamma-2}}(\Omega)}\| g_\eps\chi_{M_\eps^c}\|_{L^\infty(0,T;L^\gamma(\Omega))}\| q_{k,N}^{\eps,g}\|_{L^2(\Omega)}\\
        &\lesssim \eps^{\frac{3\delta}{7}}+\eps^{\frac{1+2\delta}{7}}+{\rm Ma}(\eps)^{\frac{2}{\gamma}},
	\end{align*}
    and the right-hand side vanishes as $\eps\to 0$.
 
 Focusing on the first term in \eqref{divterm}, for $\gamma<3$, we can use \eqref{allaire5}, since $\frac{6\gamma}{5\gamma -6}\geq 2 > \frac{3}{2}$, and  $\|\hh_\eps\|_{L^2(0,T;L^{\frac{6\gamma}{\gamma+6}}(\Omega_\eps;\RR^3))}\lesssim {\rm Ma}(\eps)^{\min\{\frac{2}{\gamma},1\}}$ to get
	\begin{equation}\label{bem1}
		\begin{split}
	\Bigg| \int_0^\tau\int_\Omega \Grad(q_{k,N}^{\eps,g} \xi_\eps \psi )  \hh_\eps \dx\dt \Bigg| &\lesssim \| \Grad(q_{k,N}^{\eps,g} \xi_\eps \psi) \|_{L^2(0,T;L^{\frac{6\gamma}{5\gamma-6}}(\bigcup_{z_i\in n^\eps_N} C_i))}\|\hh_\eps\|_{L^2(0,T;L^{\frac{6\gamma}{6+\gamma}}(\Omega; \RR^3))}\\
&\lesssim \left(\| \Grad q_{k,N}^{\eps,g} \|_{L^{\frac{6\gamma}{5\gamma-6}}(\bigcup_{z_i\in n^\eps_N} C_i; \RR^3)}+\frac{1}{d_{\eps,i}}\| q_{k,N}^{\eps,g} \|_{L^{\frac{6\gamma}{5\gamma-6}}(\bigcup_{z_i\in n^\eps_N} C_i)}\right)\Ma (\eps)^{\min\{ \frac{2}{\gamma}, 1 \}}\\
&\lesssim \left( \eps^{-\frac{\gamma+6}{\gamma}}+\eps^{-(1+\delta)}  \eps^{6} \right)\Ma (\eps)^{\min\{ \frac{2}{\gamma}, 1 \}} \lesssim \eps^{-\frac{\gamma+6}{\gamma}} \Ma (\eps)^{\min\{ \frac{2}{\gamma}, 1 \}},
		\end{split}
	\end{equation}
	where we used that on each $C_i$
	\begin{align*}
		|\Grad(q_{k,N}^{\eps,g} \xi_\eps \psi )|\lesssim |\xi_\eps||\psi ||\Grad q_{k,N}^{\eps,g}|+|\Grad \xi_\eps||\psi ||q_{k,N}^{\eps,g}|+|\xi_\eps||\Grad\psi ||q_{k,N}^{\eps,g}|\lesssim |\Grad q_{k,N}^{\eps,g}|+\frac{1}{d_{\eps,i}}|q_{k,N}^{\eps,g}|.
	\end{align*}
	By assumption \eqref{MaFinal} the right-hand side of \eqref{bem1} vanishes for $\eps\to 0$.
    
    Finally, for $\gamma\geq 3$, we can use \eqref{allaire5}, $\frac{6\gamma}{5\gamma -6}\leq 2$, and  $\|\hh_\eps\|_{L^2(0,T;L^{\frac{6\gamma}{\gamma+6}}(\Omega_\eps;\RR^3))}\lesssim {\rm Ma}(\eps)^{\min\{\frac{2}{\gamma},1\}}$ to get
	\begin{equation}\label{bem2}
		\begin{split}
	\Bigg| \int_0^\tau\int_\Omega \Grad(q_{k,N}^{\eps,g} \xi_\eps \psi )  \hh_\eps \dx\dt \Bigg|&\lesssim \| \Grad(q_{k,N}^{\eps,g} \xi_\eps \psi) \|_{L^2(0,T;L^{\frac{6\gamma}{5\gamma-6}}(\bigcup_{z_i\in n^\eps_N} C_i; \RR^3))}\|\hh_\eps\|_{L^2(0,T;L^{\frac{6\gamma}{6+\gamma}}(\Omega; \RR^3))}\\
&\lesssim \left(\| \Grad q_{k,N}^{\eps,g} \|_{L^{\frac{6\gamma}{5\gamma-6}}(\bigcup_{z_i\in n^\eps_N} C_i; \RR^3)} + \frac{1}{d_{\eps,i}}\| q_{k,N}^{\eps,g} \|_{L^{\frac{6\gamma}{5\gamma-6}}(\bigcup_{z_i\in n^\eps_N} C_i)}\right)\Ma (\eps)^{\min\{ \frac{2}{\gamma}, 1 \}}\\
&\lesssim \left(\| \Grad q_{k,N}^{\eps,g} \|_{L^2(\bigcup_{z_i\in n^\eps_N} C_i; \RR^3)}+\frac{1}{d_{\eps,i}}\| q_{k,N}^{\eps,g} \|_{L^2(\bigcup_{z_i\in n^\eps_N} C_i)}\right)\Ma (\eps)^{\min\{ \frac{2}{\gamma}, 1 \}}\\
&\lesssim \left(\eps^{-3}+\eps^{-(1+\delta)} \right)\Ma (\eps)^{\min\{ \frac{2}{\gamma}, 1 \}} \lesssim \eps^{-3} \Ma (\eps)^{\min\{ \frac{2}{\gamma}, 1 \}},
		\end{split}
	\end{equation}
    By assumption \eqref{MaFinal} the right-hand side of \eqref{bem2} vanishes for $\eps\to 0$ and therefore the right-hand side of \eqref{divterm} does as well.
\end{proof}
    Having the above lemmas at hand, we can now show:
	\begin{lem}\label{allarirsfunkt2} Assume that ${\rm Ma}(\eps)$ obeys \eqref{MaFinal}.
		Let $(\oomega_k^{\eps,g},q_k^{\eps,g})_{1\le k\le 3}$ be the pairs of functions from Lemma~\ref{AllaireFunk}. Then there exist $\mmu_k \in W^{-1, \infty}(\Omega; \RR^3)$, $k \in \{1,2,3\}$, such that
        the pairs $(\oomega_k^{\eps,g},\mmu_k)_{1\le k\le 3}$ satisfy \ref{H5}. 
	\end{lem}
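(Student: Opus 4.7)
The plan is to follow the classical Cioranescu–Murat–Allaire strategy adapted to the Stokes system and the random perforation at hand: first establish the limit for the truncated field $\oomega_{k,N}^{\eps,g}$ for each fixed $N$, then pass to the limit $N\to\infty$ to define $\mmu_k$ and recover $\oomega_k^{\eps,g}$.

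For each fixed $N$, the pair $(\oomega_{k,N}^{\eps,g},q_{k,N}^{\eps,g})$ solves the Stokes system piecewise in the annuli $C_i$ and $D_i$, equals $\ee_k$ outside $\bigcup_{z_i\in n_N^\eps}B_{2,i}$, and vanishes in $\bigcup_{z_i\in n_N^\eps}T_i$. Consequently, distributionally $-\Delta\oomega_{k,N}^{\eps,g}+\Grad q_{k,N}^{\eps,g}=\Ppsi_k^{N,\eps}$, where $\Ppsi_k^{N,\eps}$ is a vector-valued measure concentrated on the interface spheres $\partial B_{1,i}\cup\partial B_{2,i}$ and built from the jumps of the Cauchy stress $\Grad\oomega_{k,N}^{\eps,g}-q_{k,N}^{\eps,g}\id$ across these spheres. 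Testing with $\psi\vv_\eps$ (which vanishes on $\bigcup_i T_i$ by no-slip) and integrating by parts then gives
$$\int_0^\tau\!\!\int_{\Omega_\eps}\Grad\oomega_{k,N}^{\eps,g}:\Grad(\psi\vv_\eps)\dx\dt=\int_0^\tau\!\!\int_{\Omega_\eps}q_{k,N}^{\eps,g}\div(\psi\vv_\eps)\dx\dt+\int_0^\tau\langle\Ppsi_k^{N,\eps},\psi\vv_\eps\rangle\dt.$$
The divergence integral splits into $\int q_{k,N}^{\eps,g}\vv_\eps\cdot\Grad\psi\dx\dt$, which is controlled by the $L^2$-estimate in \eqref{allaire7} together with the uniform bound on $\vv_\eps$ in $L^2(0,T;W^{1,2}(D))$, and $\int q_{k,N}^{\eps,g}\psi\div\vv_\eps\dx\dt$, which is precisely the object shown to vanish in Lemma~\ref{divergenzterm}. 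Both pieces therefore tend to zero as $\eps\to 0$.

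The core step is the analysis of the measure term. Using the explicit Stokeslet asymptotics $\oomega_k(y)-\ee_k=O(|y|^{-1})$ with stress of order $|y|^{-2}$ at infinity, together with the rescaling $y=(x-\eps z_i)/a_{\eps,i}$ (so that $|y|\sim d_{\eps,i}/a_{\eps,i}$ on $\partial B_{1,i}$), the stress jump on each sphere can be expanded and shown to be, to leading order, of the same type as the surface measures $(cd_{\eps,i})^{-2}a_{\eps,i}\delta_i^{cd_{\eps,i}}$ appearing in Lemma~\ref{reihen}. That lemma then yields strong convergence of $\Ppsi_k^{N,\eps}$ in $W^{-1,\infty}(\Omega;\RR^3)$ to a constant vector $\mmu_k^N\in\RR^3$ parallel to $\ee_k$, with coefficient determined by the Stokes drag and the intensity of the thinned process. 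Combined with $\vv_\eps\rightharpoonup\vv$ weakly in $L^2(0,T;W^{1,2}(D;\RR^3))$ and (by Rellich) strongly in $L^2(0,T;L^q(D;\RR^3))$ for $q<6$, this establishes the identity of \ref{H5} for the truncated field.

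Finally, to pass $N\to\infty$ I use that $\oomega_k^{\eps,g}-\oomega_{k,N}^{\eps,g}$ is supported in the balls $B_{2,i}$ indexed by $n^\eps\setminus n_N^\eps$. By Lemma~\ref{giunti1} and the Strong Law of Large Numbers (Lemma~\ref{SLLN}) together with the moment bound \eqref{meanrho}, the weighted sum $\sum_{z_i\in n^\eps\setminus n_N^\eps}a_{\eps,i}$ can be made arbitrarily small as $N\to\infty$ uniformly for small $\eps$. Combined with the $L^p$-estimates of Lemma~\ref{AllaireFunk}, this shows that the induced error in both the distributional equation for $\oomega_k^{\eps,g}$ and in the boundary measure tends to zero as $N\to\infty$ uniformly in $\eps$. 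Hence $\mmu_k^N\to\mmu_k$ strongly in $W^{-1,\infty}(\Omega;\RR^3)$ with $\mmu_k$ a constant multiple of $\ee_k$, and property \ref{H5} holds for $(\oomega_k^{\eps,g},\mmu_k)$. The main technical obstacle I foresee is the $\eps$-uniform Stokeslet expansion on $\partial B_{1,i}$, where $d_{\eps,i}/a_{\eps,i}$ is only polynomially large (cf.\ \eqref{grundabschätzung}); this is precisely the place where the quantitative pressure bounds of Remark~\ref{AllaireFunk2} and the Mach-number hypothesis \eqref{MaFinal} must be used to absorb the error terms coming from the divergence of $\vv_\eps$ via Lemma~\ref{divergenzterm}.
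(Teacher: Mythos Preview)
Your approach is essentially the same as the paper's: establish \ref{H5} first for the truncated field $\oomega_{k,N}^{\eps,g}$ via integration by parts, the Stokeslet expansion on the interface sphere, Lemma~\ref{reihen}, and Lemma~\ref{divergenzterm}, then let $N\to\infty$.

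Two small corrections are needed. First, the support of $\oomega_k^{\eps,g}-\oomega_{k,N}^{\eps,g}$ is \emph{not} contained only in the balls indexed by $n^\eps\setminus n_N^\eps$: the truncated construction also replaces each radius $r_i$ by $r_{i,N}=\min\{r_i,N\}$, so the difference is additionally nonzero in the balls with $z_i\in n_N^\eps$ and $r_i\geq N$. Both contributions must be treated separately via the Strong Law of Large Numbers (the first through $\langle r\,\id_{r\geq N}\rangle\to 0$, the second through $\eps^3\#(n^\eps\setminus n_N^\eps)\to 0$). Second, Rellich alone does not give strong convergence of $\vv_\eps$ in $L^2(0,T;L^q(D))$ without some time compactness, which you do not have here; the correct pairing is that the interface measure converges \emph{strongly} in $W^{-1,2}(\Omega)$ (a purely spatial object, independent of $t$), and this pairs against $\psi\vv_\eps\rightharpoonup\psi\vv$ \emph{weakly} in $L^2(0,T;W_0^{1,2}(D))$.
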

	\begin{proof}
		\item\paragraph{Step 1:} First, we show that there exist $\mmu_{k, N}$ such that \ref{H5} holds for $(\oomega_{k,N}^{\eps,g}, \mmu_{k,N})$,
		where $\oomega_{k, N}^{\eps, g}$ is given in Definition~\ref{def:Nfunctions}. To this end, we start in rewriting
		\begin{equation}\label{4a}
			\begin{split}
	\int_0^\tau\int_{\Omega} \Grad \oomega_{k,N}^{\eps,g} : \Grad (\psi  \vv_\eps)\dx\dt
&=\int_0^\tau\int_{\Omega} \Grad \oomega_{k,N}^{\eps,g}:\Grad(\psi\vv_\eps)-q_{k,N}^{\eps,g}\div(\psi \vv_\eps) \dx\dt\\&+\int_0^\tau\int_{\Omega} q_{k,N}^{\eps,g} \vv_\eps \cdot\Grad \psi \dx\dt+\int_0^\tau\int_{\Omega} \psi q_{k,N}^{\eps,g} \div(\vv_\eps)\dx\dt.
			\end{split}
		\end{equation}
	The first term of the right-hand side becomes
		\begin{align*}
			\int_0^\tau\int_{\Omega} \Grad \oomega_{k,N}^{\eps,g}:\Grad(\psi\vv_\eps)&-q_{k,N}^{\eps,g}\div(\psi \vv_\eps) \dx\dt
			=-\int_0^\tau\int_{\bigcup_{z_i\in n^\eps_N} C_i} (\Delta \oomega_{k,N}^{\eps,g}-\Grad q_{k,N}^{\eps,g})\cdot(\psi\vv_\eps)\dx\dt\\
			&+\int_0^\tau\int_{\Omega}\left(\sum\limits_{z_i\in n^\eps_N} \left(\frac{\partial \oomega_{k,N}^{\eps,g}}{\partial s_i}-\ee^i q_{k,N}^{\eps,g}\right)\delta_i^{\frac{d_{\eps,i}}{2}}\right)\cdot(\psi \vv_\eps)\dx\dt\\
			&+\int_0^\tau\int_{\bigcup_{z_i\in n^\eps_N} D_i}( \Grad \oomega_{k,N}^{\eps,g}:\Grad(\psi\vv_\eps)-q_{k,N}^{\eps,g}\div(\psi \vv_\eps))\dx\dt,
		\end{align*}
        where $s_i=|x-\eps z_i|$ is the radial coordinate and $\ee^i\colon x\mapsto\frac{x-\eps z_i}{|x-\eps z_i|}$ the unit vector of $x\in B_{cd_{\eps,i}}(\eps z_i)$ in $B_{cd_{\eps,i}}(\eps z_i)$. Due to the definition of $\oomega_{k,N}^{\eps,g}$ and $q_{k,N}^{\eps,g}$, the first term on the right-hand side above vanishes identically. For the last term we get from \eqref{allaire6}
		\begin{align*}
			\Bigg| \int_0^\tau\int_{\bigcup_{z_i\in n^\eps_N} D_i} &\Grad \oomega_{k,N}^{\eps,g}:\Grad(\psi\vv_\eps)-q_{k,N}^{\eps,g}\div(\psi \vv_\eps)\dx\dt \Bigg|\\
			&\lesssim (\| \Grad\oomega_{k,N}^{\eps,g}\|_{L^2(\Omega; \RR^3)}+\| q_{k,N}^{\eps,g}\|_{L^2(\Omega)})\| \vv_\eps\|_{L^2(0,T;W^{1,2}(\Omega;\RR^3))}\lesssim \eps^{\frac{\delta}{2}}.
		\end{align*}
		Hence, this term vanishes for $\eps\to 0$. Next, we define
		\begin{align*}
			\mmu_{k,N}^\eps:=\sum_{z_i\in n^\eps_N}\left(\frac{\partial \oomega_{k,N}^{\eps,g}}{\partial s_i}-\ee^i q_{k,N}^{\eps,g}\right)\delta_i^{\frac{d_{\eps,i}}{2}},
		\end{align*} 
	and show that 
		\begin{align}\label{konvfurN}
			\int_0^\tau\int_\Omega \mmu_{k,N}^\eps\cdot(\psi \vv_\eps)\dx\dt \to \int_0^\tau\int_\Omega \mmu_{k,N}\cdot( \psi\vv)\dx\dt,
		\end{align}
        where
        \begin{align*}
            \mmu_{k, N} = 2 \sigma_3 \langle \cN_\frac{2}{N}(\Omega) \rangle \langle r_N \rangle \FF_k, && \FF_k=\int_{\partial B_1} \left( \frac{\partial \oomega_k}{\partial \mathbf{n}}-q_k \mathbf{n} \right){\rm d}\mathcal{H}^{2}.
        \end{align*}
		By invoking \cite[Lemma 2.3.5]{Allaire1990a} we get
		\begin{align*}
			\left(\frac{\partial \oomega_{k,N}^{\eps,g}}{\partial s_i}-\ee^i q_{k,N}^{\eps,g}\right)\Bigg\lvert_{ s_i=\frac{d_{\eps,i}}{2}}
			=\frac{1}{a_{\eps,i}}\left(\frac{\partial \oomega_k}{\partial s_i}-\ee^i q_k\right)\Bigg\lvert_{ s_i=\frac{d_{\eps,i}}{2a_{\eps,i}}}
			= \frac{2 a_{\eps,i}}{\sigma_3 d_{\eps,i}^{2}}[\FF_k + 3(\FF_k\cdot \ee^i)\ee^i]+\R_\eps,
		\end{align*}
		where
		\begin{align*}
			\| \R_\eps \|_{L^\infty(\Omega)} \lesssim \frac{a_{\eps,i}^{2}}{d_{\eps,i}^3}\lesssim \eps^\delta d_{\eps,i}^{-2}  a_{\eps,i}.
		\end{align*}
	Now by Lemma \ref{reihen} it follows that
		\begin{align*}
			\sum_{z_i\in n_N^\eps} \left( \frac{d_{\eps,i}}{2} \right)^{-2} a_{\eps,i}[\FF_k+3(\FF_k\cdot\ee^i)\cdot\ee^i] \delta_i^\frac{d_{\eps, i}}{2} \to 2\sigma_3 \langle \cN_{\frac{2}{N}}(\Omega) \rangle\langle r_N\rangle \FF_k\quad\text{strongly in } W^{-1,2}(\Omega;\RR^3)\text{ a.s.}
		\end{align*}
		For the remaining term we have
		\begin{align*}
			-4C\sum_{z_i\in n^\eps_N}\eps^\delta d_{\eps,i}^{-2}  a_{\eps,i}\delta_i^{d_{\eps,i}/2}\leq \sum_{z_i\in n^\eps_N} \R_\eps (x)\cdot \ee_k \delta_i^{d_{\eps,i}/2}\leq 4C \sum_{z_i\in n^\eps_N}\eps^\delta d_{\eps,i}^{-2}  a_{\eps,i}\delta_i^{d_{\eps,i}/2}.
		\end{align*}
		By adding $4C\sum_{z_i\in n^\eps_N}\eps^\delta d_{\eps,i}^{-2}  a_{\eps,i}\delta_i^{d_{\eps,i}/2}$ on both sides we get 
		\begin{align*}
			0\leq  \sum_{z_i\in n^\eps_N} \R_\eps (x)\cdot \ee_k \delta_i^{d_{\eps,i}/2} + 4C\sum_{z_i\in n^\eps_N}\eps^\delta d_{\eps,i}^{-2}  a_{\eps,i}\delta_i^{d_{\eps,i}/2}\leq 8C\sum_{z_i\in n^\eps_N} \eps^\delta d_{\eps,i}^{-2}  a_{\eps,i}\delta_i^{d_{\eps,i}/2}.
		\end{align*}
		Note that Lemma \ref{reihen} enforces 
		\begin{align*}
			4 \sum_{z_i\in n^\eps_N}\eps^\delta d_{\eps,i}^{-2}  a_{\eps,i}\delta_i^{d_{\eps,i}/2}\to 0 \quad \text{ strongly in } W^{-1,2}(\Omega) \text{ a.s.}
		\end{align*}
		Using \cite[Lemma 2.3.8]{Allaire1990a} we deduce that
		\begin{align*}
			\sum_{z_i\in n^\eps_N} \R_\eps (x)\cdot \ee_k \delta_i^{d_{\eps,i}/2} + 4C \sum_{z_i\in n^\eps_N}d_{\eps,i}^{-2}  a_{\eps,i}\delta_i^{d_{\eps,i}/2}\to 0 \quad \text{ strongly in } W^{-1,2}(\Omega) \text{ a.s.},
		\end{align*}
		and finally
		\begin{align*}
			\sum_{z_i\in n^\eps_N} \R_\eps (x)\cdot \ee_k \delta_i^{d_{\eps,i}/2} \to 0 \quad \text{ strongly in } W^{-1,2}(\Omega)\text{ a.s.}
		\end{align*}
		Therefore, we find
		\begin{align*}
			\mmu_{k,N}^\eps \to  \mmu_{k,N} 
            \quad\text{ strongly in } W^{-1,2}(\Omega;\RR^3)\text{ a.s.,}
		\end{align*}
	and \eqref{konvfurN} is proven.
		Using moreover $q_{k,N}^{\eps,g}\to 0$ strongly in $L^{\frac{6}{5}}(\Omega)$ and $\vv_\eps\rightharpoonup \vv$ weakly in $L^2(0,T;W^{1,2}(\Omega;\RR^3))$, we additionally get that
		\begin{align*}
			\int_0^\tau\int_{\Omega} q_{k,N}^{\eps,g} \Grad \psi \cdot \vv_\eps\dx\dt \to 0.
		\end{align*}
        Combining this with Lemma~\ref{divergenzterm}, the last two terms in \eqref{4a} vanish, which concludes the proof of this first step.
	
		\item\paragraph{Step 2:} We prove that \ref{H5} holds true for $\oomega_{k}^{\eps, g}$ and 
        \begin{align}\label{measDrag}
            \mmu_k = 2 \sigma_3 \lambda \langle r \rangle \FF_k, && \FF_k=\int_{\partial B_1} \left( \frac{\partial \oomega_k}{\partial \mathbf{n}}-q_k \mathbf{n} \right){\rm d}\mathcal{H}^{2}.
        \end{align}
        This follows arguing as in the proof of \cite[Lemma 2.5]{GiuntiHoefer2019}. Indeed, for each $N\in\NN$ we have
		\begin{align*}
			&\limsup_{\eps\to 0} \left| \int_0^\tau\int_\Omega \Grad\oomega_k^{\eps,g} :\Grad(\psi \vv_\eps)\dx\dt - \int_0^\tau\int_\Omega \mmu_k\cdot(\psi \vv)\dx\dt\right|\\
			&\lesssim \left| \int_0^\tau\int_\Omega (\mmu_{k,N}-\mmu_k)\cdot(\psi \vv)\dx\dt\right|
			+\limsup_{\eps\to 0} \left| \int_0^\tau\int_\Omega \Grad(\oomega_k^{\eps,g}-\oomega_{k,N}^{\eps,g}):\Grad(\psi \vv_\eps)\dx\dt\right|\\
			&\quad+\limsup_{\eps\to 0}\left| \int_0^\tau\int_\Omega \Grad\oomega_{k,N}^{\eps,g} :\Grad(\psi \vv_\eps)\dx\dt - \int_0^\tau\int_\Omega \mmu_{k,N}\cdot(\psi \vv)\dx\dt\right|\\
			&\lesssim \left| \int_0^\tau\int_\Omega (\mmu_{k,N}-\mmu_k)\cdot(\psi \vv)\dx\dt\right|
			+\limsup_{\eps\to 0}\sum_{z_i\in n^\eps}\| \Grad (\oomega_k^{\eps,g}-\oomega_{k,N}^{\eps,g})\|_{L^2(B_{d_{\eps,i}}(\eps z_i))}.
		\end{align*}		
		In the last inequality we used that \ref{H5} holds for $\oomega_{k,N}^{\eps,g}$ and that due to the definition of $\oomega_k^{\eps,g}$ and $\oomega_{k,N}^{\eps,g}$ we have
		\begin{align*}
			\supp (\oomega_k^{\eps,g}-\oomega_{k,N}^{\eps,g})&\subset~ \bigcup_{\mathclap{\substack{z_i\in n_N^\eps\\r_i\geq N}}}B_{d_{\eps,i}}(\eps z_i) \cup \bigcup_{\mathclap{z_i\in n^\eps\setminus n_N^\eps}}B_{d_{\eps,i}}(\eps z_i),\\
			\oomega_k^{\eps,g}-\oomega_{k,N}^{\eps,g}=\oomega_k^{\eps,g}-\ee_k &\quad\text{in } \bigcup_{\mathclap{z_i\in n^\eps\setminus n_N^\eps}}B_{d_{\eps,i}}(\eps z_i).
		\end{align*}
		Therefore, it follows along with Lemma \ref{AllaireFunk} that
		\begin{align}\label{a6}
			&\sum_{z_i\in n^\eps} \| \Grad (\oomega_k^{\eps,g}-\oomega_{k,N}^{\eps,g})\|_{L^2(B_{d_{\eps,i}}(\eps z_i); \RR^{3 \times 3})}\nonumber\\
			&\lesssim\sum_{\mathclap{\substack{z_i\in n_N^\eps\\r_i\geq N}}}(\| \Grad \oomega_k^{\eps,g}\|_{L^2(B_{d_{\eps,i}}(\eps z_i); \RR^{3 \times 3})} + \| \Grad \oomega_{k,N}^{\eps,g}\|_{L^2(B_{d_{\eps,i}}(\eps z_i); \RR^{3 \times 3})}) + \sum_{z_i\in n^\eps\setminus n_N^\eps}\| \Grad \oomega_k^{\eps,g}\|_{L^2(B_{d_{\eps,i}}(\eps z_i); \RR^{3 \times 3})}\nonumber\\
			&\lesssim\sum_{\mathclap{\substack{z_i\in n_N^\eps\\r_i\geq N}}}\eps^3(r_i + r_{i, N})
			+\sum_{z_i\in n^\eps\setminus n_N^\eps}\eps^3 r_i
			\lesssim \sum_{\mathclap{z_i\in n^\eps}}\eps^3r_i\id_{r_i\geq N}
			+\sum_{z_i\in n^\eps\setminus n_N^\eps}\eps^3 r_i.
		\end{align}
		The Strong Law of Large Numbers (Lemma \ref{SLLN}) yields that
		\begin{align*} 
			\lim_{\eps\to 0}\eps^3\sum_{z_i\in n^\eps}r_i\id_{r_i\geq N}=\langle r\id_{r \geq N} \rangle.
		\end{align*}
		We use \cite[Lemma C.1]{Allaire1990a}, \eqref{k1} and $n_N^\eps\subset n^\eps$ to conclude
		\begin{align}\label{b6}
			\lim_{N\to \infty} \lim_{\eps\to 0}\eps^3\#(n^\eps\setminus n_N^\eps)=\lim_{N\to \infty} \lambda |\Omega|-\langle \cN_{\frac{2}{N}}(\Omega) \rangle =0,
		\end{align}
		showing that \eqref{a6} vanishes for $\eps\to 0$ and $N\to \infty$. We conclude Step 2 by arguing that almost surely
		\begin{align}\label{jedna}
			\lim_{N\to\infty}\left| \int_0^\tau\int_\Omega (\mmu_{k,N}-\mmu_k)\cdot(\psi \vv)\dx\dt\right|= 0.
		\end{align}
		Indeed, with \cite[Lemma C.1]{Allaire1990a} we get
		\begin{align*}
			\lim_{N\to \infty}\langle \cN_{\frac{2}{N}}(\Omega)\rangle = \lambda |\Omega|,
		\end{align*}
		and by \eqref{meanrho} we have
		\begin{align*}
			\lim_{N\to \infty}\langle r_N \rangle
			= \langle r\rangle.
		\end{align*}
		This proves that
		\begin{align*}
			\mmu_{k,N}\xrightharpoonup{N\to \infty}\mmu_k\quad\text{weakly in } W^{-1,2}(\Omega; \RR^3),
		\end{align*}
        which in turn shows \eqref{jedna}.
	\end{proof}

In order to define finally the functions $\oomega_k^\eps$, we still need to define the ``bad'' functions $\oomega_k^{\eps, b}$. Let $\Ppsi\in C^\infty(\overline{\Omega};\RR^3)$ with $\div \Ppsi =0$. We recall from Lemma~\ref{giunti2} that $$\Omega_{\rm b}^\eps= \bigcup_{z_i\in J^\eps}B_{\theta\lambda_i^\eps\eps^{3}r_i}(\eps z_i)$$
		and $J^\eps=\bigcup_{k=-3}^{k_{\rm max}}J^\eps_k$, where for each $k$ the set of centers  $J_k^\eps$ is such that the corresponding balls are pairwise disjoint.
		 Let then $R_j, B_{R,j}, B_{\theta, j}, W_j$ be defined as in \eqref{def:sets}. We will define the ``bad'' functions $\Ppsi^\eps_{\rm b}$ recursively:
         
For all $0\leq i\leq k_{\max}+3$ let $\Ppsi_\eps^{-1} := \Ppsi$ and $\Ppsi_\eps^i$ be given by
		\begin{align*}
				\Ppsi_\eps^i:=
			\left\{ \begin{array}{ll}
			\Ppsi_\eps^{i-1} & \text{ in } \Omega\setminus \bigcup_{z_j\in J_{ k_{\rm max}-i}^\eps}B_{\theta,j},\\
			\Ppsi_\eps^{i-1}+\Ppsi_{j,\eps}^i\left(\frac{\cdot -\eps z_j}{\eps^{3} R_j}\right)& \text{ in } W_j\text{ for all } z_j\in J^\eps_{k_{\max}-i},
			 \\
			0 & \text{ in }B_{R,j} \text{ for all } z_j\in J^\eps_{k_{\max}-i},
			\end{array}\right.
		\end{align*}
		where $(\Ppsi_{j,\eps}^i,\pi_{j,\eps}^i)$ is the weak solution of
		\begin{align*}
			\left\{\begin{array}{ll}
				\Delta \Ppsi_{j,\eps}^i-\Grad \pi_{j,\eps}^i=0 & \text{ in } B_{\theta}\setminus B_{1}, \\
				\div \Ppsi_{j,\eps}^i=0 & \text{ in } B_{\theta}\setminus B_{1},\\
				\Ppsi_{j,\eps}^i=-\Ppsi_\eps^{i-1}(\eps^{3}R_j\cdot +\eps z_j)& \text{ on } \partial B_{1},\\
				\Ppsi_{j,\eps}^i=0 & \text{ on } \partial B_{\theta}.
			\end{array}\right.
		\end{align*}
		Finally, set $\Ppsi^\eps_{\rm b}:=\Ppsi_\eps^{k_{\max}+3}$. With this definition, we can show:
        
    \begin{lem}\label{PsiLemma}
       $\Psi^\eps_{\rm b}$ satisfies
		\begin{align}\label{phi_b}
			\| \Grad(\Ppsi^\eps_{\rm b}-\Ppsi)\|_{L^p(\Omega; \RR^{3 \times 3})}^p
			\lesssim 
			\left\{ \begin{array}{ll}
				\eps^{6(2-p)\delta}
				& \text{ for } 1<p\leq 2 \\
				\eps^{3(2-p)}& \text{ for } p>2
			\end{array} \right.\quad\text{ almost surely,}
		\end{align}
		and
		\begin{align}\label{L2konvphi_b}
		\lim_{\eps\to 0}	\| \Grad(\Ppsi^\eps_{\rm b}-\Ppsi)\|_{L^2(\Omega; \RR^{3 \times 3})}^2 = 0 \quad \text{ almost surely.}
		\end{align}
    \end{lem}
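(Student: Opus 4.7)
The plan is to use the telescoping decomposition
\begin{equation*}
\Ppsi^\eps_{\rm b} - \Ppsi = \sum_{i=0}^{k_{\max}+3}(\Ppsi_\eps^i - \Ppsi_\eps^{i-1}),
\end{equation*}
where the $i$-th summand is supported on $\bigcup_{z_j \in J^\eps_{k_{\max}-i}} B_{\theta, j}$. Within each such ball the summand equals the rescaled Stokes corrector $\Ppsi_{j,\eps}^i$ on the annulus $W_j$ and equals $-\Ppsi_\eps^{i-1}$ on the inner ball $B_{R,j}$. The idea is to control each corrector via standard $L^p$ Stokes theory on the fixed annulus $B_\theta \setminus B_1$, rescale back to $W_j$, and sum over $j \in J^\eps$ using the geometric information from Lemmas \ref{giunti1} and \ref{giunti2}.

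First I would establish by induction on $i$ the uniform bound $\|\Ppsi_\eps^i\|_{L^\infty(\Omega)} \lesssim 1$. Starting from the smooth datum $\Ppsi_\eps^{-1} = \Ppsi$, each Stokes correction $\Ppsi_{j,\eps}^i$ on the fixed annulus $B_\theta \setminus B_1$ with $L^\infty$-bounded boundary data inherits an $L^\infty$ bound via a maximum-principle type argument, and since $k_{\max}$ is a fixed constant independent of $\eps$, finitely many iterations preserve the uniform bound. Next, standard $L^p$ estimates for the Stokes system on $B_\theta \setminus B_1$ give $\|\nabla_y \Ppsi_{j,\eps}^i\|_{L^p(B_\theta \setminus B_1)} \lesssim 1$ uniformly in $j$, $i$, $\eps$. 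Rescaling $y = (x - \eps z_j)/(\eps^3 R_j)$ back to $W_j$ yields
\begin{equation*}
\|\nabla \Ppsi_{j,\eps}^i\|_{L^p(W_j)}^p = (\eps^3 R_j)^{3-p} \|\nabla_y \Ppsi_{j,\eps}^i\|_{L^p(B_\theta \setminus B_1)}^p \lesssim (\eps^3 R_j)^{3-p},
\end{equation*}
and inside $B_{R,j}$ we similarly bound $\|\nabla \Ppsi_\eps^{i-1}\|_{L^p(B_{R,j})}^p \lesssim (\eps^3 R_j)^3 \leq (\eps^3 R_j)^{3-p}$, using $\eps^3 R_j \leq 1$ for $\eps$ small.

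Summing over $j \in J^\eps$ and decomposing $(\eps^3 R_j)^{3-p} = (\eps^3 R_j) \cdot (\eps^3 R_j)^{2-p}$, the bound $\eps^3 R_j \leq \Lambda \eps^{6\delta}$ from Lemma \ref{giunti2} (in the case $p \leq 2$) and $R_j \geq R_0$ (in the case $p > 2$) supply the desired $\eps$-dependent prefactors, while the strong law of large numbers together with $J^\eps \subset \cI^\eps$ provides $\eps^3 \sum_{j \in J^\eps} R_j \leq \Lambda \eps^3 \sum_{z_i \in \cI^\eps} r_i \lesssim 1$. This yields \eqref{phi_b}. For \eqref{L2konvphi_b}, the estimate above at $p = 2$ is only $O(1)$; however, the sharper asymptotic $\eps^3 \sum_{z_i \in \cI^\eps} r_i \to 0$ almost surely (see \cite[Lemma C.2]{GiuntiHoefer2019}, already used in the remark following \eqref{sets}) upgrades the summation to
\begin{equation*}
\|\nabla(\Ppsi^\eps_{\rm b} - \Ppsi)\|_{L^2(\Omega)}^2 \lesssim \sum_{j \in J^\eps} \eps^3 R_j \leq \Lambda \eps^3 \sum_{z_i \in \cI^\eps} r_i \to 0 \quad \text{a.s.}
\end{equation*}

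The main technical delicacy is justifying the uniform Stokes $L^p$ estimate when the boundary datum on $\partial B_{R,j}$ is the modified function $\Ppsi_\eps^{i-1}$ (which may carry traces of earlier corrections) rather than the smooth $\Ppsi$. The non-overlap conditions of Lemma \ref{giunti2} combined with the uniform $L^\infty$ bound from the first step are what ensure that this boundary trace stays controlled on the rescaled sphere $\partial B_1$ with constants independent of $\eps$, $i$, $j$, so that classical Stokes regularity on the fixed annulus can be applied.
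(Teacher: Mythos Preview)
Your overall architecture matches the paper's proof: establish a uniform $C^0$ bound on the iterates $\Ppsi_\eps^i$, use standard $L^p$ Stokes estimates on the fixed annulus $B_\theta\setminus B_1$ to bound each corrector, rescale to $W_j$ to produce $(\eps^3 R_j)^{3-p}$, and then sum over $J^\eps$ using $\eps^3 R_j\le \Lambda\eps^{6\delta}$ together with $\eps^3\sum_{z_i\in\cI^\eps} r_i\to 0$. The final summation and the derivation of \eqref{L2konvphi_b} are exactly as in the paper.

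There is, however, a genuine gap in your treatment of the inner ball. You write
\[
\|\Grad \Ppsi_\eps^{i-1}\|_{L^p(B_{R,j})}^p \lesssim (\eps^3 R_j)^3,
\]
which would require $\|\Grad \Ppsi_\eps^{i-1}\|_{L^\infty(B_{R,j})}\lesssim 1$. You only established the $L^\infty$ bound on $\Ppsi_\eps^{i-1}$ itself, not on its gradient; and in fact the rescaled Stokes correctors from earlier steps carry gradients of order $(\eps^3 R_m)^{-1}$, which need not be $O(1)$. The non-overlap conditions of Lemma~\ref{giunti2} do \emph{not} ensure that the supports $B_{\theta,m}$ of earlier corrections (at levels $l>k_{\max}-i$) stay outside $B_{R,j}$, so $\Ppsi_\eps^{i-1}$ may genuinely differ from $\Ppsi$ on $B_{R,j}$. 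The paper avoids this problem by not seeking a pointwise gradient bound: instead it proves by induction the cumulative estimate
\[
\|\Grad(\Ppsi-\Ppsi_\eps^i)\|_{L^p(\Omega)}^p \lesssim \sum_{z_j\in\bigcup_{k\le i}J^\eps_{k_{\max}-k}}\big(\|\Grad\Ppsi\|_{L^p(B_{\theta,j})}^p+(\eps^3 R_j)^{3-p}\big),
\]
and at the induction step handles $\|\Grad\Ppsi_\eps^{i-1}\|_{L^p(B_{\theta,j})}$ by the triangle inequality $\|\Grad\Ppsi_\eps^{i-1}\|\le \|\Grad(\Ppsi_\eps^{i-1}-\Ppsi)\|+\|\Grad\Ppsi\|$, absorbing the first piece into the inductive hypothesis. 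Replacing your direct telescoping bound on $B_{R,j}$ by this inductive absorption closes the gap with no change to the rest of your argument.
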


    \begin{proof}
	 
		We observe that for all $0\leq i\leq k_{\max}+3$
		\begin{align}\label{s}
			\| \Ppsi_\eps^i\|_{C^0(\Omega; \RR^3)}\lesssim \| \Ppsi\|_{C^0(\Omega; \RR^3)}.
		\end{align}
		In the next step we show by induction that for all $0\leq i\leq k_{\max}+3$ there holds
		\begin{align}\label{IV}
			\| \Grad (\Ppsi-\Ppsi_{\eps}^i)\|_{L^p(\Omega; \RR^{3 \times 3})}^p
			&\lesssim \sum\limits_{z_j\in \bigcup_{k=0}^iJ^\eps_{k_{\max}-k}} (\| \Grad \Ppsi\|_{L^p(B_{\theta,j}; \RR^{3 \times 3})}^p+(\eps^{3}R_j)^{3-p}\| \Ppsi\|_{C^0(\Omega; \RR^3)}^p).
		\end{align}
		Let $i=0$, then we have by definition for all $z_j\in J^\eps_{k_{\max}}$  
		\begin{align*}
			\| \Grad(\Ppsi-\Ppsi_\eps^0)\|_{L^p(B_{R,j}; \RR^{3 \times 3})} = \| \Grad \Ppsi\|_{L^p(B_{R,j}; \RR^{3 \times 3})}, &&
			\| \Grad(\Ppsi-\Ppsi_\eps^0)\|_{L^p(\Omega \setminus \bigcup_{z_j\in J_{k_{\max}}^\eps}B_{\theta,j}; \RR^{3 \times 3})}=0.
		\end{align*}
		Let $\eta$ be a cut-off function on $W_i$ with $\eta=0$ on $\partial B_{\theta,i}$, $\eta=1$ on $\partial B_{R,j}$, and $|\Grad \eta|\lesssim (\eps^{3}R_j)^{-1}$. By \cite[Theorem II.4.3. and Theorem IV.6.1.]{Galdi2011} and \eqref{s} we have
		\begin{equation}\label{ID2verfahren}
			\begin{split}
		\| \Grad (\Ppsi-\Ppsi_\eps^0)\|_{L^p(W_j; \RR^{3 \times 3})}^p
	&=(\eps^{3}R_j)^{3-p}\| \Grad \Ppsi_{j,\eps}^0\|_{L^p(B_\theta\setminus B_1; \RR^{3 \times 3})}^p\\
	&\lesssim (\eps^{3} R_j)^{3-p}\| (\eta\Ppsi)(\eps^{3}R_j\cdot+\eps z_i)\|_{W^{1-\frac{1}{p},p}(\partial(B_\theta\setminus B_1); \RR^3)}^p\\
	&\lesssim (\eps^{3}R_j)^{3-p}\| (\eta\Ppsi)(\eps^{3}R_j\cdot+\eps z_i)\|_{W^{1,p}(B_\theta\setminus B_1; \RR^3)}^p\\
	&\lesssim (\eps^{3}R_j)^{-p}\| \eta\Ppsi\|_{L^p(W_j; \RR^3)}^p + \| \Grad(\eta\Ppsi)\|_{L^p(W_j; \RR^3)}^p \\
	&\lesssim (\eps^{3}R_j)^{-p}\| \Ppsi\|_{L^p(W_j; \RR^3)}^p + \|\Grad\Ppsi\|_{L^p(W_j; \RR^3)}^p\\
	&\lesssim (\eps^{3}R_j)^{3-p}\| \Ppsi\|_{C^0(\Omega; \RR^3)}^p + \|\Grad\Ppsi\|_{L^p(W_j; \RR^3)}^p,
			\end{split}
		\end{equation}
		where we used that $\Grad\Ppsi_{j,\eps}^0$ and $(\eta\Ppsi)(\eps^{3} R_j\cdot + \eps z_i)$ fulfill the same boundary conditions in $B_\theta\setminus B_1$. Thus,
		\begin{align*}
			\| \Grad (\Ppsi-\Ppsi_\eps^0)\|_{L^p(\Omega; \RR^{3 \times 3})}^p
			&\lesssim \sum_{z_j\in J^\eps_{k_{\max}}}  (\| \Grad \Ppsi\|_{L^p(B_{\theta,j}; \RR^{3 \times 3})}^p+(\eps^{3}R_j)^{3-p}\| \Ppsi\|_{C^0(\Omega; \RR^3)}^p).
		\end{align*}
		Assume now \eqref{IV} being true for $i-1$ and we want to prove it for $i$. By definition we have for all $z_j\in J^\eps_{k_{\rm max}-i}$
		\begin{align*}
			\| \Grad(\Ppsi_\eps^{i-1}-\Ppsi_\eps^i)\|_{L^p(B_{R,j}; \RR^{3 \times 3})}=\| \Grad \Ppsi_\eps^{i-1}\|_{L^p(B_{R,j}; \RR^{3 \times 3})},\quad
			\| \Grad(\Ppsi_\eps^{i-1}-\Ppsi_\eps^{i})\|_{L^p(\Omega \setminus \bigcup_{z_j\in J_{k_{\max}-i}^\eps}B_{\theta,j}; \RR^{3 \times 3})}=0.
		\end{align*}
		Using \eqref{s} and the same arguments as in \eqref{ID2verfahren}, we have for the difference between the gradients of $\Ppsi_{\eps}^{i-1}$ and $\Ppsi_\eps^i$ 
		\begin{align*}
			\| \Grad (\Ppsi_{\eps}^{i-1}-\Ppsi_{\eps}^i)\|_{L^p(W_j; \RR^{3 \times 3})}^p
			&=(\eps^{3}R_j)^{3-p}\| \Grad \Ppsi_{j,\eps}^i\|_{L^p(B_\theta\setminus B_1; \RR^{3 \times 3})}^p\\
			&\lesssim(\eps^{3}R_j)^{3-p}\| \Ppsi_{\eps}^{i-1}\|_{C^0(\Omega; \RR^3)}^p+\| \Grad \Ppsi_{\eps}^{i-1}\|_{L^p(W_j; \RR^{3 \times 3})}^p\\
			&\lesssim(\eps^{3}R_j)^{3-p}\| \Ppsi\|_{C^0(\Omega; \RR^3)}^p+\| \Grad \Ppsi_{\eps}^{i-1}\|_{L^p(W_j; \RR^{3 \times 3})}^p.
		\end{align*}
		This leads to
		\begin{align*}
			\| \Grad (\Ppsi_{\eps}^{i-1}-\Ppsi_{\eps}^i)\|_{L^p(\Omega; \RR^{3 \times 3})}^p \lesssim \sum_{z_j\in J_{k_{\max}-i}^\eps}(\| \Grad \Ppsi_{\eps}^{i-1}\|_{L^p(B_{\theta,j}; \RR^{3 \times 3})}^p
			+(\eps^{3}R_j)^{3-p}\| \Ppsi\|_{C^0(\Omega; \RR^3)}^p)
		\end{align*}
		such that we get for the difference between the gradients of $\Ppsi$ and $\Ppsi_{\eps}^i$
		\begin{align*}
			\| \Grad (\Ppsi-\Ppsi_{\eps}^i) \|_{L^p(\Omega; \RR^{3 \times 3})}^p &\lesssim \| \Grad (\Ppsi-\Ppsi_{\eps}^{i-1})\|_{L^p(\Omega; \RR^{3 \times 3})}^p \\
            &\quad + \sum\limits_{z_j\in J_{k_{\max}-i}^\eps} (\| \Grad \Ppsi_{\eps}^{i-1}\|_{L^p(B_{\theta,j}; \RR^{3 \times 3})}^p
			+(\eps^{3}R_j)^{3-p}\| \Ppsi\|_{C^0(\Omega; \RR^3)}^p)\\
			&\lesssim \| \Grad (\Ppsi-\Ppsi_{\eps}^{i-1})\|_{L^p(\Omega; \RR^{3 \times 3})}^p \\
            &\quad + \sum\limits_{z_j\in J_{k_{\max}-i}^\eps} (\| \Grad( \Ppsi_{\eps}^{i-1}-\Ppsi)\|_{L^p(B_{\theta,j}; \RR^{3 \times 3})}^p + (\eps^{3}R_j)^{3-p}\| \Ppsi\|_{C^0(\Omega; \RR^3)}^p+\| \Grad \Ppsi\|_{L^p(B_{\theta,j}; \RR^{3 \times 3})}^p)\\
			&\lesssim \| \Grad (\Ppsi-\Ppsi_{\eps}^{i-1})\|_{L^p(\Omega; \RR^{3 \times 3})}^p + \sum\limits_{z_j\in J_{k_{\max}-i}^\eps} (\| \Grad \Ppsi\|_{L^p(B_{\theta,j}; \RR^{3 \times 3})}^p + (\eps^{3}R_j)^{3-p}\| \Ppsi\|_{C^0(\Omega; \RR^3)}^p).
		\end{align*}
	Since \eqref{IV} holds for $i-1$ we deduce
		\begin{align}\label{badpartabsch}
			\| \Grad (\Ppsi-\Ppsi_{\eps}^i)\|^p_{L^p(\Omega; \RR^{3 \times 3})}
			&\lesssim \sum\limits_{z_j\in \bigcup_{k=0}^{i}J^\eps_{k_{\max}-k}} (\| \Grad \Ppsi\|_{L^p(B_{\theta,j}; \RR^{3 \times 3})}^p + (\eps^{3}R_j)^{3-p}\| \Ppsi\|_{C^0(\Omega; \RR^3)}^p)
		\end{align}
	and \eqref{IV} is proven. For $i=k_{\max}+3$, being $r_j\geq R_0>0$, $\eps^{3}R_j\le \Lambda \eps^{6\delta}$, and $\lambda_j^\eps\in [1,\Lambda]$, we have
		\begin{align*}
			\| \Grad(\Ppsi_{\rm b}^\eps-\Ppsi)\|_{L^p(\Omega; \RR^{3 \times 3})}^p &= \| \Grad(\Ppsi_\eps^{k_{\rm max}+3}-\Ppsi)\|_{L^p(\Omega; \RR^{3 \times 3})}^p
			\\
		&	\lesssim \sum_{z_j\in J^\eps} ~~(\| \Grad \Ppsi\|_{L^p(B_{\theta,j}; \RR^{3 \times 3})}^p + (\eps^{3}R_j)^{3-p}\| \Ppsi\|_{C^0(\Omega; \RR^3)}^p)
			\\
			&\lesssim \eps^{12\delta}\eps^{3}\sum_{z_j\in J^\eps} r_j
			+\eps^{3(2-p)}\eps^{3}\sum_{z_j\in J^\eps} r_j^{3-p}\\
		&	\lesssim \eps^3\sum\limits_{z_j\in J^\eps} r_j\cdot
			\left\{ \begin{array}{ll}
				(\eps^{12\delta}+\eps^{6(2-p)\delta})
			& \text{ for } 1<p\leq 2, \\
				(\eps^{12\delta}+\eps^{3(2-p)})& \text{ for } p>2.
			\end{array} \right.
		\end{align*}
		Since $J^\eps\subset\cI^\eps$ and $n^\eps=\Phi^\eps(\Omega)\setminus \cI^\eps$ we get from Lemma \ref{giunti1} that $\eps^3\#J^\eps \to 0$ almost surely if $\eps\to 0$. This together with the Strong Law of Large Numbers Lemma~\ref{SLLN} implies that
		\begin{align*}
			\lim_{\eps\to 0}\eps^3\sum_{z_i\in J^\eps}r_i=0 \quad\text{ almost surely.}
		\end{align*}
		Consequently, we obtain 
		\begin{align*}
			\| \Grad(\Ppsi^\eps_{\rm b}-\Ppsi)\|_{L^p(\Omega; \RR^{3 \times 3})}^p
			\lesssim 
			\left\{ \begin{array}{ll}
				\eps^{6(2-p)\delta}
				& \text{ for } 1<p\leq 2 \\
				\eps^{3(2-p)}& \text{ for } p>2
			\end{array} \right.\quad\text{ almost surely,}
		\end{align*}
		and
		\begin{align*}
		\lim_{\eps\to 0}	\| \Grad(\Ppsi^\eps_{\rm b}-\Ppsi)\|_{L^2(\Omega; \RR^{3 \times 3})}^2= 0\quad\text{ almost surely.}
		\end{align*}
    \end{proof}



    \begin{defin}
        We choose $\Ppsi(x)=\ee_k$ in Lemma \ref{PsiLemma} and define $\oomega_k^{\eps,b}:=\Ppsi^\eps_{\rm b}$ and $\oomega_k^\eps:=\oomega_k^{\eps,g}+\oomega_k^{\eps,b}-\ee_k$.
    \end{defin}
    
    \begin{prop}
        Assume that ${\rm Ma}(\eps)$ obeys \eqref{MaFinal}. The functions $(\oomega_k^\eps,\mmu_k)$ satisfy \ref{H1}--\ref{H5}, where $\mmu_k$ is given by \eqref{measDrag}.
    \end{prop}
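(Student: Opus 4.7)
The plan is to exploit the linear decomposition $\oomega_k^\eps=\oomega_k^{\eps,g}+\oomega_k^{\eps,b}-\ee_k$ and verify each of \ref{H1}--\ref{H5} by combining the properties of the two building blocks. Since $\Grad\ee_k=0$, the gradient splits cleanly as $\Grad\oomega_k^\eps=\Grad\oomega_k^{\eps,g}+\Grad\oomega_k^{\eps,b}$, so that the good part will carry the Brinkman contribution $\mmu_k$ via Lemma~\ref{allarirsfunkt2}, while the bad part will contribute only an asymptotically vanishing error.

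First I would dispatch \ref{H1}--\ref{H4}. Property \ref{H1} is immediate from $\oomega_k^{\eps,g},\oomega_k^{\eps,b}\in W^{1,2}(\Omega;\RR^3)$ by construction. For \ref{H2}, the divergence-freeness of each summand is built into the Stokes constructions in Lemmas~\ref{AllaireFunk} and~\ref{PsiLemma}. To see that $\oomega_k^\eps=0$ on each hole, one distinguishes two cases: on a good hole $T_i^\eps$ with $z_i\in n^\eps$ we have $\oomega_k^{\eps,g}=0$, while $\oomega_k^{\eps,b}=\ee_k$ because $\oomega_k^{\eps,b}-\ee_k$ is supported in $\bigcup_{z_j\in J^\eps}B_{\theta,j}\subset\Omega_{\rm b}^\eps$, a region disjoint from the good holes by Lemma~\ref{giunti1}; on a bad hole, conversely, $\oomega_k^{\eps,b}=0$ since bad holes are contained in some $B_{R,j}$, while $\oomega_k^{\eps,g}=\ee_k$ because the support of $\oomega_k^{\eps,g}-\ee_k$ lies inside $\bigcup_{z_i\in n^\eps}B_{d_{\eps,i}}(\eps z_i)$ and $d_{\eps,i}\leq\dist(\eps z_i,\Omega_{\rm b}^\eps)$. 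For \ref{H3} the plan is to establish strong $W^{1,2}$-convergence of both summands to $\ee_k$: estimate \eqref{allaire1} with $p=2$ combined with a Poincar\'e inequality on each $B_{2,i}$ (where $\oomega_k^{\eps,g}-\ee_k$ has zero boundary values) yields $\oomega_k^{\eps,g}\to\ee_k$, and \eqref{L2konvphi_b} together with the uniform $L^\infty$-bound on $\oomega_k^{\eps,b}$ and $|\bigcup_{z_j\in J^\eps}B_{\theta,j}|\to 0$ yields $\oomega_k^{\eps,b}\to\ee_k$. Finally, \ref{H4} is trivial since $\mmu_k=2\sigma_3\lambda\langle r\rangle\FF_k\in\RR^3$ is a constant vector on a bounded domain.

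The essential content is \ref{H5}. The plan is to split, for $\psi\in C_c^\infty((0,T)\times\Omega)$,
\begin{align*}
\int_0^\tau\int_{\Omega_\eps}\Grad\oomega_k^\eps:\Grad(\psi\vv_\eps)\dx\dt
=\int_0^\tau\int_{\Omega_\eps}\Grad\oomega_k^{\eps,g}:\Grad(\psi\vv_\eps)\dx\dt
+\int_0^\tau\int_{\Omega_\eps}\Grad\oomega_k^{\eps,b}:\Grad(\psi\vv_\eps)\dx\dt.
\end{align*}
The first term converges to $\int_0^\tau\int_\Omega\mmu_k\cdot(\psi\vv)\dx\dt$ by Lemma~\ref{allarirsfunkt2}. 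For the second, using that $\oomega_k^{\eps,b}$ is time-independent and Cauchy--Schwarz,
\begin{align*}
\left|\int_0^\tau\int_{\Omega_\eps}\Grad\oomega_k^{\eps,b}:\Grad(\psi\vv_\eps)\dx\dt\right|
\lesssim \sqrt{\tau}\,\|\Grad(\oomega_k^{\eps,b}-\ee_k)\|_{L^2(\Omega;\RR^{3\times 3})}\,\|\Grad(\psi\vv_\eps)\|_{L^2((0,T)\times D;\RR^{3\times 3})},
\end{align*}
where the first factor vanishes as $\eps\to 0$ by \eqref{L2konvphi_b}, and the second is uniformly bounded thanks to \eqref{v1} and the smoothness of $\psi$.

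The genuine analytical difficulty---the passage to the limit in the oscillatory Allaire-type correctors and the identification of the constant drag $\mmu_k=2\sigma_3\lambda\langle r\rangle\FF_k$---has already been packaged in Lemmas~\ref{reihen}--\ref{allarirsfunkt2} through the $N$-thinning procedure and the Strong Law of Large Numbers. At the level of this proposition the only subtle point is to verify that the bad-hole correction $\oomega_k^{\eps,b}$ neither destroys the divergence-free and vanishing-on-holes conditions (for which one invokes the geometric separation of good and bad holes from Lemma~\ref{giunti1}) nor contaminates the limit in \ref{H5} (which is absorbed by the smallness estimate \eqref{L2konvphi_b}).
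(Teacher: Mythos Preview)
Your approach is essentially the same as the paper's, and the argument for \ref{H1}, \ref{H2}, \ref{H4}, and \ref{H5} is correct and matches the paper's reasoning almost verbatim (the paper invokes Lemma~\ref{allarirsfunkt2} for the good part and \eqref{L2konvphi_b} for the bad part, exactly as you do).

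There is one slip in your treatment of \ref{H3}. You propose to show \emph{strong} $W^{1,2}$-convergence of both summands to $\ee_k$, but this is false for $\oomega_k^{\eps,g}$: estimate \eqref{allaire1} at $p=2$ only gives $\|\Grad\oomega_k^{\eps,g}\|_{L^2(\Omega)}\lesssim 1$, not convergence to zero (the dominant $C_i$-contribution \eqref{allaire1-1} is of order one). Indeed, if $\oomega_k^{\eps,g}\to\ee_k$ strongly in $W^{1,2}$, the pairing $\int\Grad\oomega_k^{\eps,g}:\Grad(\psi\vv_\eps)$ would vanish and no Brinkman term could appear in \ref{H5}. The paper instead records that $\oomega_k^{\eps,g}\rightharpoonup\ee_k$ weakly in $W^{1,2}$ (boundedness of the gradient in $L^2$ plus strong $L^2$-convergence from $|\bigcup B_{2,i}|\to 0$), while $\oomega_k^{\eps,b}\to\ee_k$ strongly by \eqref{L2konvphi_b}; together these give the required weak convergence of $\oomega_k^\eps$. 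Since \ref{H3} only asks for weak convergence, your conclusion survives once you drop the erroneous strong claim.
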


    \begin{proof}
        \ref{H1} and \ref{H2} are valid through the definition of $\oomega_k^{\eps,g}$ and $\oomega_k^{\eps,b}$. Since 
        $\oomega_k^{\eps,g} \rightharpoonup \ee_k$ weakly in $W^{1,2}(\Omega;\RR^3)$ (Lemma \ref{AllaireFunk}) and $\oomega_k^{\eps,b}\rightarrow \ee_k$ strongly in $W^{1,2}(\Omega;\RR^3)$ (Lemma \ref{PsiLemma}) hold, \ref{H3} is valid. \ref{H4} is true due to the definition of $\mmu_k$. Finally, \ref{H5} holds due to Lemma \ref{allarirsfunkt2} and \eqref{L2konvphi_b}.
        
    \end{proof}


	\subsection{Construction of test functions in the random setting}\label{sec:random-bis}

	In this last section we prove that, if ${\rm Ma}(\eps)$ obeys \eqref{MaFinal}, then the following holds:
	 for a given $\pphi\in C_c^\infty(\Omega;\RR^3)$ with $\div\pphi=0$ there exists a sequence $(\pphi_\eps)_{\eps>0}$ that satisfies 
	 \ref{M1} and \eqref{M1-v} with $\bM=(\mu_k^i)_{1\le k,i\le 3}$ given by $\mu_k^i:=\mmu_k\cdot\ee_i$ and
	$\mmu_k = 2 \sigma_3 \lambda\langle r\rangle \FF_k $.
	\begin{lem}\label{phi} 
		Assume ${\rm Ma}(\eps)$ obeys \eqref{MaFinal}. Then for every $\pphi\in C_c^\infty(\Omega;\RR^3)$ with $\div\pphi=0$ there exists a sequence $(\pphi_\eps)_{\eps >0}$ that satisfies $\pphi_\eps=0$ in $\Omega\setminus \Omega_\eps$, \eqref{M1-i}--\eqref{M1-iv} and \eqref{M1-v}. Moreover, almost surely,
		\begin{equation}\label{estimate-nabla}
	\| \Grad(\pphi_\eps-\pphi)\|_{L^p(\Omega; \RR^{3 \times 3})}^p
\lesssim 
\left\{ \begin{array}{ll}
    \eps^{6(2-p)\delta}+\eps^{(1+2\delta)(2-p)} + \eps^{2-p + (p-1)\delta}
	&\text{for } 1<p \leq 2, \\[1em]
	\eps^{3(2-p)} + \eps^{2-p+\delta} 
    & \text{for } p>2.
\end{array} \right.
		\end{equation}
	\end{lem}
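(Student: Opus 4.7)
I would follow Allaire's classical restriction strategy, built on the random Allaire-type functions $\oomega_k^\eps = \oomega_k^{\eps,g} + \oomega_k^{\eps,b} - \ee_k$ from Section~\ref{sec:random}. Writing $\varphi_k := \pphi\cdot\ee_k$, set the ansatz
\[
\tilde\pphi_\eps := \sum_{k=1}^3 \varphi_k\,\oomega_k^\eps,
\]
which vanishes on $H^\eps$ (since each $\oomega_k^\eps$ does) and coincides with $\pphi$ outside $\bigcup_i B_{2,i}\cup\bigcup_j B_{\theta,j}$. Using $\div\oomega_k^\eps=0$ and $\div\pphi=0$,
\[
g_\eps := \div\tilde\pphi_\eps = \sum_k \Grad\varphi_k\cdot(\oomega_k^\eps-\ee_k)
\]
is supported in the annular region $\bigcup_i(B_{2,i}\setminus T_i)\cup\bigcup_j W_j$ and has zero mean on each connected component (since $\int_{\partial B_{2,i}}\pphi\cdot\mathbf{n}=\int_{B_{2,i}}\div\pphi=0$ and $\tilde\pphi_\eps$ vanishes on $\partial T_i$). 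I would then apply a Bogovski\u i operator on each annulus to obtain $\mathbf{w}_\eps$ with $\div\mathbf{w}_\eps = g_\eps$, vanishing on both boundary components and obeying $\|\Grad\mathbf{w}_\eps\|_{L^p}\lesssim \|g_\eps\|_{L^p}$, and set $\pphi_\eps := \tilde\pphi_\eps - \mathbf{w}_\eps$. By construction $\pphi_\eps$ is divergence-free and vanishes on $\partial\Omega_\eps\cup H^\eps$, which gives \eqref{M1-i}.

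Writing $\pphi_\eps-\pphi=\sum_k\varphi_k(\oomega_k^\eps-\ee_k)-\mathbf{w}_\eps$, I would decompose
\[
\Grad(\pphi_\eps-\pphi)=\sum_k\bigl[\varphi_k\Grad(\oomega_k^\eps-\ee_k)+(\oomega_k^\eps-\ee_k)\otimes\Grad\varphi_k\bigr]-\Grad\mathbf{w}_\eps.
\]
The dominant term is bounded by Lemma~\ref{AllaireFunk} for the good part (contributing the $\eps^{(1+2\delta)(2-p)}$, $\eps^{2-p+(p-1)\delta}$ and $\eps^{2-p+\delta}$ factors) and by Lemma~\ref{PsiLemma} for the bad part (the $\eps^{6(2-p)\delta}$ and $\eps^{3(2-p)}$ factors). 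Poincar\'e on each cell $B_{2,i}$, where $\oomega_k^{\eps,g}-\ee_k$ vanishes on the outer boundary, gives $\|\oomega_k^{\eps,g}-\ee_k\|_{L^p(B_{2,i})}\lesssim d_{\eps,i}\|\Grad\oomega_k^{\eps,g}\|_{L^p(B_{2,i})}$, so the cross term is smaller by a factor of $\eps$; similarly for the bad part. The Bogovski\u i term inherits the same estimate since $\|\Grad\mathbf{w}_\eps\|_{L^p}\lesssim\|g_\eps\|_{L^p}\lesssim\sum_k\|\oomega_k^\eps-\ee_k\|_{L^p}$. Assembling yields \eqref{estimate-nabla}; taking $p=2$, $p=3/2$, and $p=r$ (combined with \eqref{MaFinal}) gives \eqref{M1-ii}, \eqref{M1-iii}, and \eqref{M1-iv} respectively.

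For \eqref{M1-v} I would split
\begin{align*}
\int_0^\tau\!\!\int_{\Omega_\eps}\Grad\pphi_\eps:\Grad\vv_\eps\dx\dt
&=\sum_k\int_0^\tau\!\!\int_{\Omega_\eps}\varphi_k\Grad\oomega_k^\eps:\Grad\vv_\eps\dx\dt\\
&\quad+\sum_k\int_0^\tau\!\!\int_{\Omega_\eps}(\oomega_k^\eps\otimes\Grad\varphi_k):\Grad\vv_\eps\dx\dt-\int_0^\tau\!\!\int_{\Omega_\eps}\Grad\mathbf{w}_\eps:\Grad\vv_\eps\dx\dt.
\end{align*}
The strong $L^2$-convergence $\oomega_k^\eps\to\ee_k$ paired with $\Grad\vv_\eps\rightharpoonup\Grad\vv$ makes the middle sum converge to $\int_0^\tau\int_\Omega\Grad\pphi:\Grad\vv\,\dx\dt$, using $\sum_k\ee_k\otimes\Grad\varphi_k=\Grad\pphi$. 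For the first sum, write $\varphi_k\Grad\oomega_k^\eps:\Grad\vv_\eps=\Grad\oomega_k^\eps:\Grad(\varphi_k\vv_\eps)-\Grad\oomega_k^\eps:(\vv_\eps\otimes\Grad\varphi_k)$. Testing with $\psi(t,x)=\xi(t)\varphi_k(x)$ for smooth $\xi$ approximating $\chi_{[0,\tau]}$, hypothesis \ref{H5} gives $\int_0^\tau\int_{\Omega_\eps}\Grad\oomega_k^\eps:\Grad(\varphi_k\vv_\eps)\to\int_0^\tau\int_\Omega\mmu_k\varphi_k\cdot\vv$, whose sum over $k$ is $\int_0^\tau\int_\Omega(\bM\pphi)\cdot\vv\,\dx\dt$; the second piece vanishes after integration by parts since $\oomega_k^\eps-\ee_k\to 0$ strongly in $L^2$ while $\div(\vv_\eps\otimes\Grad\varphi_k)$ is bounded in $L^2$. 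The Bogovski\u i contribution vanishes by the $p=2$ case of \eqref{estimate-nabla}.

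\textbf{Main obstacle.} The principal technical point is obtaining a uniform Bogovski\u i estimate on the thin annuli $B_{2,i}\setminus T_i$, whose inner radius $a_{\eps,i}$ is much smaller than the outer radius $d_{\eps,i}$. After rescaling by $d_{\eps,i}$, each annulus becomes a bounded John domain of controlled geometry, securing a Bogovski\u i constant independent of $\eps$ and $i$. A second delicate aspect is recovering the sharp $\eps$-powers in \eqref{estimate-nabla} required for \eqref{M1-iv}: this forces the use of the precise $L^p$-bounds in Lemmas~\ref{AllaireFunk}, \ref{PsiLemma} (rather than coarse ones), and the verification that the exponent $p(\delta,\gamma)$ prescribed in \eqref{MaFinal} is large enough to absorb the worst blow-up, which stems from the $\eps^{3(2-p)}$ term at $p=r$.
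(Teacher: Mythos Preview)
Your approach is correct in outline but takes a genuinely different route from the paper. The paper does \emph{not} use the ansatz $\tilde\pphi_\eps=\sum_k\varphi_k\oomega_k^\eps$ with a Bogovski\u i correction. Instead, following \cite{GiuntiHoefer2019}, it constructs $\pphi_\eps$ by solving, around each good hole, local exterior and annular Stokes problems with $\pphi$ itself as boundary datum (the functions $\pphi^i_{1,\eps},\pphi^i_{2,\eps}$), and for the bad region it applies the recursive Stokes construction of Lemma~\ref{PsiLemma} directly with $\Ppsi=\pphi$. This yields divergence-free functions automatically, so no Bogovski\u i step is needed and the degenerate-annulus issue you flag never arises. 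Your construction is closer to Allaire's original restriction-operator framework: it has the advantage of reusing the universal functions $\oomega_k^\eps$ (so only one family of Stokes problems is solved, rather than one per test function), at the price of the Bogovski\u i correction. For \eqref{M1-v} both proofs ultimately reduce the Brinkman term to \ref{H5}; the paper does this by freezing the coefficient, comparing $\pphi^\eps_{\rm g}-\pphi$ on each $C_i$ to $\sum_k\phi_k(\eps z_i)(\oomega_k^\eps-\ee_k)$, and showing the remainder is $O(\eps^{1+2\delta})$, which is essentially equivalent to your product-rule splitting.

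One point you gloss over: the Bogovski\u i step on the \emph{bad} region is more delicate than on the good annuli. The bad covering $\bigcup_{j\in J^\eps}W_j$ has a recursive multi-generation structure (Lemma~\ref{giunti2}) in which balls from different generations may overlap, so the connected components of $\mathrm{supp}\,g_\eps$ there are not simple annuli and ``John domain of controlled geometry'' is not obvious. A clean fix is to bypass Bogovski\u i on the bad part entirely by using the paper's construction there, i.e., set $\pphi_\eps=\pphi_{\rm b}^\eps$ on $\Omega_{\rm b}^\eps$ with $\Ppsi=\pphi$ in Lemma~\ref{PsiLemma}, and apply your $\sum_k\varphi_k\oomega_k^{\eps,g}$ plus Bogovski\u i only on the good cells $B_{2,i}\setminus T_i$, where the uniform constant follows from covering $B_1\setminus B_r$, $r\le\tfrac12$, by two overlapping domains each star-shaped with respect to a fixed ball.
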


    \begin{rmk}
        Note that \eqref{estimate-nabla} combined with \eqref{MaFinal} yields \eqref{M1-iv}. Indeed, \eqref{M1-iv} requires $p=\frac{3\gamma}{2\gamma-3}$. Plugging into \eqref{estimate-nabla}, we find $3(2-p) = 3\frac{\gamma-6}{2\gamma-3}$, which is the last term of the exponent given in \eqref{MaFinal}.
    \end{rmk}
	
	\begin{proof}
		The construction of  $(\pphi_\eps)_{\eps>0}$ that satisfy $\pphi_\eps=0$ in $\Omega\setminus\Omega_\eps$, \eqref{M1-i}, and
	\begin{equation*}
	\pphi_\eps\rightharpoonup\pphi\quad\text{ weakly in }W^{1,2}(\Omega;\RR^3),
	\end{equation*}
		is done  in \cite[Lemma 2.5.]{GiuntiHoefer2019}. We repeat the construction to show that also \eqref{M1-iii}, \eqref{M1-v}, and \eqref{estimate-nabla} hold. This in turn implies the validity of \eqref{M1-ii}.

	\item\paragraph{Step 1: Validity of \eqref{estimate-nabla}.} 
		Let $\theta>1$ be fixed. Let $H^\eps_{\rm g}, H^\eps_{\rm b}, \Omega^\eps_{\rm b}$ be given as in Lemma~\ref{giunti1} and Lemma~\ref{giunti2}. We split our domain into two parts $\Omega=\Omega^\eps_{\rm b}\cup (\Omega\setminus \Omega^\eps_{\rm b})$. Next, we define 
		\begin{align*}
			\pphi_\eps:=\left\{ \begin{array}{ll}
				\pphi^\eps_{\rm b} & \text{in }\Omega^\eps_{\rm b}, \\
				\pphi^\eps_{\rm g} & \text{in }\Omega\setminus \Omega^\eps_{\rm b},
			\end{array} \right.
		\end{align*}
	with $\pphi_{\rm b}^\eps$ and  $\pphi_{\rm g}^\eps$ constructed as follows.\\

    \paragraph{Construction of $\pphi^\eps_{\rm b}$ and control over $\| \Grad(\pphi^\eps_{\rm b}-\pphi)\|_{L^p(\Omega; \RR^{3 \times 3})}^p$.}
        We choose $\Ppsi=\pphi$ in Lemma \ref{PsiLemma} and define $\pphi^\eps_{\rm b}:=\Ppsi^\eps_{\rm b}$.\\
	
	\paragraph{Construction of $\pphi^\eps_{\rm g}$ and control over $\| \Grad(\pphi^\eps_g-\pphi)\|_{L^p(\Omega; \RR^{3 \times 3})}^p$.} 
		For each $z_i\in n^\eps$ let $(\pphi_{1,\eps}^i,\pi_\eps^i)$ and $(\pphi_{2,\eps}^i,q_\eps^i)$ be the unique weak solutions to
		\begin{align*}
			\left\{\begin{array}{ll}
				\Delta \pphi_{1,\eps}^i-\Grad \pi_\eps^i=0 & \text{ in } \RR^3\setminus B_{1}, \\
				\div \pphi_{1,\eps}^i=0 & \text{ in } \RR^3\setminus B_{1},\\
				\pphi_{1,\eps}^i=\pphi(a_{\eps,i}\cdot +\eps z_i)& \text{ on } \partial B_{1},\\
				\pphi_{1,\eps}^i\to 0 & \text{ for } |x|\to\infty,\\
			\end{array}\right.
		\quad \text{ and }\quad
			\left\{\begin{array}{ll}
			\Delta \pphi_{2,\eps}^i-\Grad q_\eps^i=0 & \text{ in } B_{2}\setminus B_{1}, \\
			\div \pphi_{2,\eps}^i=0 & \text{ in } B_{2}\setminus B_{1},\\
			\pphi_{2,\eps}^i=\pphi_{1,\eps}^i\left(\frac{d_{\eps,i}\cdot}{2a_{\eps,i}}\right)& \text{ on } \partial B_{1},\\
			\pphi_{2,\eps}^i=0 & \text{ on } \partial B_{2},\\
		\end{array}\right.
		\end{align*}
	respectively. Recalling the sets $T_i, C_i, D_i, B_{2,i}$ from \eqref{sets}, we now define
		\begin{align*}
			\pphi^\eps_{\rm g}:=	\left\{ \begin{array}{ll}
			0 & \text{ in } T_i, \\
			\pphi-\pphi_{1,\eps}^i\left(\frac{\cdot-\eps z_i}{a_{\eps,i}}\right) & \text{ in } C_i,\\
				\pphi-\pphi_{2,\eps}^i\left(\frac{2(\cdot-\eps z_i)}{d_{\eps,i}}\right)& \text{ in } D_i,\\
			\pphi & \text{ in } \Omega\setminus \bigcup_{z_i\in n^\eps} B_{2,i}.
			\end{array}\right.
		\end{align*}
		
By \cite[Theorem 7.1.]{Maremonti1999} and \cite[Theorem II.4.3.]{Galdi2011} we have
		\begin{align}\label{phi1abschaetzung}
			\| \Grad \pphi_{1,\eps}^i\|^p_{L^p(\RR\setminus B_1; \RR^{3 \times 3})}
			&\lesssim \| \pphi(a_{\eps,i}\cdot -z_i)\|^p_{W^{1-\frac{1}{p},p}(\partial B_1; \RR^3)}
			\lesssim \| \pphi(a_{\eps,i}\cdot -z_i)\|^p_{W^{1,p}(B_1; \RR^3)}\nonumber\\
			&\lesssim a_{\eps,i}^{-d}\| \pphi\|^p_{L^p(T_i; \RR^3)}+a_{\eps,i}^{-(3-p)}\| \Grad\pphi\|^p_{L^p(T_i; \RR^{3 \times 3})}.
		\end{align}
		Hence, it follows
		\begin{align}\label{phi_gC}
			\| \Grad(\pphi^\eps_{\rm g}-\pphi)\|^p_{L^p(C_i; \RR^{3 \times 3})}
			&\lesssim a_{\eps,i}^{3-p}\| \Grad \pphi_{1,\eps}^i\|^p_{L^p(\RR\setminus B_1; \RR^{3 \times 3})}
			\lesssim a_{\eps,i}^{-p}\| \pphi\|^p_{L^p(T_i; \RR^3)} + \| \Grad\pphi\|^p_{L^p(T_i; \RR^{3 \times 3})}\nonumber\\
			&\lesssim a_{\eps,i}^{3-p}\| \pphi\|^p_{C^1(T_i; \RR^3)}+a_{\eps,i}^{3}\| \pphi\|^p_{C^1(T_i; \RR^3)}
			\lesssim a_{\eps,i}a_{\eps,i}^{-(p-2)}.
		\end{align}
		Let us now consider a cut-off function $\eta\in C^\infty_c(\RR^3)$ such that $\eta=0$ in $\RR^3\setminus B_{d_{\eps,i}a_{\eps,i}^{-1}}$ and $\eta=1$ in $ B_{\frac{1}{2}d_{\eps,i}a_{\eps,i}^{-1}}$. Using \cite[Theorem IV.6.1.]{Galdi2011}, \cite[Theorem II.4.3.]{Galdi2011}, \eqref{phi1abschaetzung}, and $|\Grad\eta|\lesssim \frac{a_{\eps,i}}{d_{\eps,i}}$, we deduce
		\begin{align*}
			\| \Grad(\pphi^\eps_{\rm g}-\pphi)\|^p_{L^p(D_i; \RR^{3 \times 3})}
			&\lesssim d_{\eps,i}^{3-p}\| \Grad\pphi_{2,\eps}^i\|_{L^p(B_2\setminus B_1; \RR^{3 \times 3})}^p
			\lesssim d_{\eps,i}^{3-p}\left\| \Grad\pphi_{2,\eps}^i \right\|_{W^{1-\frac{1}{p},p}(\partial (B_2\setminus B_1); \RR^{3 \times 3})}^p\\ 
			&\lesssim d_{\eps,i}^{3-p}\left\| (\eta\pphi_{1,\eps}^i)\left(\frac{d_{\eps,i}\cdot -\eps z_i}{2a_{\eps,i}}\right)\right\|_{W^{1,p}(B_2\setminus B_1; \RR^3)}^p\\
			&\lesssim a_{\eps,i}^{3} d_{\eps,i}^{-p}\left\| \eta\pphi_{1,\eps}^i\right\|_{L^p(E_i^0; \RR^3)}^p + a_{\eps,i}^{3-p}\left\| \Grad(\eta\pphi_{1,\eps}^i)\right\|_{L^p(E_i^0; \RR^{3 \times 3})}^p\\
			&\lesssim  a_{\eps,i}^{3} d_{\eps,i}^{-p}\left\| \pphi_{1,\eps}^i\right\|_{L^p(E_i^0; \RR^3)}^p + a_{\eps,i}^{3-p}\left\| \Grad\pphi_{1,\eps}^i\right\|_{L^p(E_i^0; \RR^{3 \times 3})}^p + a_{\eps,i}^{3} d_{\eps,i}^{-p}\left\| \pphi_{1,\eps}^i\right\|_{L^p(E_i^0; \RR^3)}^p.
		\end{align*}
	   Since
	\begin{align*}
		\| \pphi(a_{\eps,i}\cdot+\eps z_i)\|_{L^2(B_2; \RR^3)}^2 &= a_{\eps,i}^3 \| \pphi\|_{L^2(B_{2a_{\eps,i}}(\eps z_i); \RR^3)}^2 \lesssim a_{\eps,i}^3 a_{\eps,i}^{-3} \| \pphi\|_{C^0(\Omega; \RR^3)}^2\lesssim 1
	\end{align*}
and 
\begin{align*}
		\| \Grad(\pphi(a_{\eps,i}\cdot+\eps z_i))\|_{L^2(B_2; \RR^{3 \times 3})}^2
	&
	=a_{\eps,i}^2 a_{\eps,i}^3 \| \Grad\pphi\|_{L^2(B_{2a_{\eps,i}}(\eps z_i); \RR^{3 \times 3})}^2
	\lesssim a_{\eps,i}^2 a_{\eps,i}^3 a_{\eps,i}^{-3} \| \pphi\|_{C^1(\Omega; \RR^3)}^2 \lesssim a_{\eps,i}^2,
\end{align*}
	we have $\| \pphi(a_{\eps,i}\cdot+\eps z_i)\|_{W^{1,2}(B_2; \RR^3)}\lesssim 1$.
    
Furthermore, from \cite[Theorem 6.1]{Maremonti1999} we get
	\begin{align*}
		|\Grad \pphi_{1,\eps}^i(x)|\lesssim \| \pphi(a_{\eps,i}\cdot+\eps z_i)\|_{W^{1,2}(B_2; \RR^3)}|x|^{-2}\lesssim |x|^{-2}\quad \text{ for every }|x|\geq 3.
	\end{align*}
	
As $d_{\eps,i}a_{\eps,i}^{-1}\geq \eps^{-\delta}$ we have for $\eps\ll 1$ that $|x|\geq 3$ for each $x\in E_i^0$. In this way
	\begin{align*}
		\left\| \Grad \pphi_{1,\eps}^i\right\|^p_{L^p(E_i^0; \RR^{3 \times 3})}
		\lesssim \int_{E_i^0} |x|^{-2p}
		\lesssim \left(\frac{a_{\eps,i}}{d_{\eps,i}}\right)^{3(p-1)-p}.
	\end{align*}
		Finally, we get
		\begin{align}\label{phi_gD}
			\| \Grad(\pphi^\eps_{\rm g}-\pphi)\|^p_{L^p(D_i; \RR^{3 \times 3})}
			&\lesssim a_{\eps,i}^{3} d_{\eps,i}^{-p}\left\| \pphi_{1,\eps}^i\right\|^p_{L^p(E_i^0; \RR^3)} + a_{\eps,i}^{3-p}\left\| \Grad\pphi_{1,\eps}^i\right\|^p_{L^p(E_i^0; \RR^{3 \times 3})}\nonumber\\
			&\lesssim a_{\eps,i}^{3-p}\left(\frac{a_{\eps,i}}{d_{\eps,i}}\right)^{3(p-1)-p}
			\lesssim a_{\eps,i} d_{\eps,i}^{(2-p)}\left(\frac{a_{\eps,i}}{d_{\eps,i}}\right)^{p-1}
			\lesssim a_{\eps,i} d_{\eps,i}^{2-p}\eps^{\delta(p-1)(d-2)}.
		\end{align}
		On the bad holes, $\phi_{\rm g}^\eps = \phi$. On the good holes we have $a_{\eps, i} \lesssim \eps^{1+2 \delta}$ and thus
		\begin{align}\label{phi_gT}
			\| \Grad(\pphi^\eps_{\rm g}-\pphi)\|^p_{L^p(T_i; \RR^3)}\lesssim \| \pphi\|_{C^1(\Omega, \RR^3)}a_{\eps,i}^{3}\lesssim a_{\eps,i} \eps^{2(1+2\delta)}.
		\end{align}
		Combining \eqref{phi_gC}, \eqref{phi_gD}, and \eqref{phi_gT}, we find
		\begin{align*}
			\| \Grad(\pphi^\eps_{\rm g}-\pphi)\|_{L^p(\Omega; \RR^{3 \times 3})}^p
		&	\lesssim \sum_{z_i\in n^\eps} a_{\eps,i} (a_{\eps,i}^{2-p}+d_{\eps,i}^{2-p}\eps^{(p-1)\delta}+\eps^{2(1+2\delta)}),\\
			&\lesssim\left\{ \begin{array}{ll}
				(\eps^{(1+2\delta)(2-p)}+\eps^{2-p}\eps^{(p-1)\delta}+\eps^{2(1+2\delta)})\sum_{z_i\in n^\eps} a_{\eps,i} & \text{for } 1<p<2\\[1em]
				(1+\eps^{\delta}+\eps^{2(1+2\delta)}) \sum_{z_i\in n^\eps} a_{\eps,i} & \text{for }p=2, \\[1em]
				(\eps^{3(2-p)}+\eps^{(1+\delta)(2-p)}\eps^{(p-1)\delta} + \eps^{2(1+2\delta)})\sum_{z_i\in n^\eps} a_{\eps,i} & \text{for }p>2.
			\end{array} \right.
		\end{align*}
		Lastly, we use the Strong Law of Large Numbers Lemma~\ref{SLLN} to conclude that almost surely
		\begin{align}\label{phi_g}
			\| \Grad(\pphi^\eps_{\rm g}-\pphi)\|_{L^p(\Omega; \RR^{3 \times 3})}^p
			&\lesssim\left\{ \begin{array}{ll}
				\eps^{(1+2\delta)(2-p)} + \eps^{2-p + (p-1)\delta}& \text{for } 1<p \leq 2,\\[1em]
				\eps^{3(2-p)}+\eps^{2-p+\delta}& \text{for }p>2.
			\end{array} \right.
		\end{align}

        Eventually, combining \eqref{phi_b} and \eqref{phi_g}, we find \eqref{estimate-nabla}.
		
		
		\item\paragraph{Step 2: Validity of \eqref{M1-v}.}
		By definition of $\pphi^\eps_{\rm b}$ and $\pphi^\eps_{\rm g}$ we have
		\begin{align*}
			\pphi_\eps=\pphi^\eps_{\rm b}+\pphi^\eps_{\rm g}-\pphi,
		\end{align*}
		therefore we can write
		\begin{align}\label{a}
			\int_0^T\int_\Omega \Grad \pphi_\eps : \Grad \vv_\eps\dx\dt
			=\int_0^T\int_\Omega \Grad (\pphi^\eps_{\rm b}-\pphi) : \Grad \vv_\eps\dx\dt+\int_0^T\int_\Omega \Grad \pphi^\eps_{\rm g} : \Grad \vv_\eps\dx\dt.
		\end{align}
		Using that $\|\vv_\eps\|_{L^2(0,T;W^{1,2}(\Omega;\RR^3))} \lesssim 1$ and \eqref{L2konvphi_b} we deduce  
		\begin{align*}
			\int_0^T\int_\Omega \Grad (\pphi_{\rm b}^\eps-\pphi) : \Grad \vv_\eps\dx\dt
			\lesssim \| \Grad (\pphi_{\rm b}^\eps-\pphi)\|_{L^2(\Omega;\RR^{3 \times 3})} \| \vv_\eps\|_{L^2(0,T;W^{1,2}(\Omega;\RR^3))}
			\xrightarrow{\eps\to 0} 0.
		\end{align*}
		Hence, it is left  to show that
		\begin{align}\label{g1}
		\lim_{\eps\to 0}	\int_0^T\int_\Omega \Grad \pphi^\eps_{\rm g} : \Grad \vv_\eps\dx\dt = \int_0^T\int_\Omega \Grad \pphi : \Grad \vv\dx\dt + \int_0^T\int_\Omega (\bM \pphi) \cdot\vv\dx\dt  .
		\end{align}
	To this end we write
		\begin{equation}\label{konvTerm}
			\begin{split}
				\int_0^T\int_\Omega \Grad \pphi^\eps_{\rm g} : \Grad \vv_\eps\dx\dt
			=\int_0^T\int_\Omega \Grad \pphi : \Grad \vv_\eps\dx\dt
			&	+\int_0^T\int_{\bigcup_{z_i \in n^\eps} D_i} \Grad (\pphi^\eps_{\rm g}-\pphi) : \Grad \vv_\eps\dx\dt\\
			&+
			\int_0^T\int_{\bigcup_{z_i \in n^\eps} C_i} \Grad (\pphi^\eps_{\rm g}-\pphi) : \Grad \vv_\eps\dx\dt.
			\end{split}
		\end{equation}
	Since $\vv_\eps\rightharpoonup\vv$ weakly in $L^2(0,T;W^{1,2}(\Omega;\RR^3))$, the first term on the right-hand side converges to ${\int_0^T\int_\Omega \Grad \pphi : \Grad \vv}\dx\dt$. To control the second term on the right-hand side we use \eqref{phi_gD} and the boundedness of  $\vv_\eps$ in $L^2(0,T;W^{1,2}(\Omega;\RR^3))$ to see that 
		\begin{align*}
			\Bigg| \int_0^T\int_{\bigcup_{z_i \in n^\eps} D_i} &\Grad (\pphi^\eps_{\rm g}-\pphi) : \Grad \vv_\eps \dx\dt\Bigg|
			\lesssim \| \Grad (\pphi^\eps_{\rm g}-\pphi) \|_{L^2(\bigcup_{z_i \in n^\eps} D_i; \RR^{3 \times 3})}\| \vv_\eps\|_{L^2(0,T;W^{1,2}(\Omega; \RR^3))}\\
			&\lesssim \left( \sum_{z_i\in n^\eps} a_{\eps,i}\eps^{\delta}\right)^{\frac{1}{2}}\| \vv_\eps\|_{L^2(0,T;W^{1,2}(\Omega;\RR^3))}
			\lesssim \eps^{\frac{\delta}{2}}\left( \eps^3\sum_{z_i\in n^\eps} r_i\right)^{\frac{1}{2}}.
		\end{align*}
		Due to the Lemma~\ref{SLLN} the right-hand side vanishes almost surely for $\eps\to 0$.
	Setting $\pphi=(\phi_1, \phi_2, \phi_3)$ and letting $\oomega_k^\eps$ be the function given by  Lemma \ref{AllaireFunk} we define a new function $\overline{\pphi}_\eps$ by
		\begin{align*}
				\overline{\pphi}_\eps:=
			\left\{\begin{array}{ll}
			\sum_{k=1}^3 \phi_k(\eps z_i)(\oomega_k^\eps((\cdot -\eps z_i)a_{\eps,i}^{-1}) -\ee_k)&  \text{ in } C_i \text{ for }z_i\in n^\eps,\\
		0& \text{ in } \Omega\setminus(\bigcup_{z_i\in n^\eps}C_i).
			\end{array}\right.     
		\end{align*}
		We write the last term of \eqref{konvTerm} as
		\begin{align}\label{konvTerm1}
			\int_0^T\int_{\bigcup_{z_i \in n^\eps} C_i} \Grad (\pphi^\eps_{\rm g}-\pphi) : \Grad \vv_\eps\dx\dt
			&=\int_0^T\int_{\bigcup_{z_i \in n^\eps}C_i} \Grad (\pphi^\eps_{\rm g}-\pphi- \overline{\pphi}_\eps): \Grad  \vv_\eps\dx\dt \\ 
            &\qquad+ \int_0^T\int_{\bigcup_{z_i \in n^\eps}C_i} \Grad \overline{\pphi}_\eps : \Grad \vv_\eps\dx\dt.
		\end{align}
		Now observe that $-\pphi_{1,\eps}^i- \sum_{k=1}^3 \phi_k(\eps z_i)\left( \oomega_k-\ee_k \right)$ solves the Stokes problem in the exterior domain $\RR^3\setminus B_1$ with boundary datum $-\pphi(a_{\eps,i}\cdot -\eps z_i)+\pphi(\eps z_i)$. As a consequence 
		\begin{align*}
			\| \Grad (\pphi^\eps_{\rm g}-\pphi- \overline{\pphi}_\eps)\|_{L^2(C_i; \RR^{3 \times 3})}^2
			&\lesssim 
			a_{\eps,i}^{-2}
			\| \Grad (-\pphi_{1,\eps}^i - \sum_{k=1}^3 \phi_k(\eps z_i)\left( \oomega_k-\ee_k \right))\left((\cdot -\eps z_i)a_{\eps,i}^{-1}\right)\|_{L^2(C_i; \RR^{3 \times 3})}^2\\
			&\lesssim a_{\eps,i}\| \Grad (-\pphi_{1,\eps}^i- \sum_{k=1}^3 \phi_k(\eps z_i)\left( \oomega_k-\ee_k \right))\|_{L^2(\RR\setminus B_1; \RR^{3 \times 3})}^2\\
			&\lesssim 
			 a_{\eps,i}\|\pphi(a_{\eps,i}\cdot+\eps z_i)-\pphi(\eps z_i)\|^2_{W^{1-\frac12,2}(\partial B_1; \RR^3)}\\
			 &\lesssim 
			 a_{\eps,i}\|
		(	\eta (\pphi-\pphi(\eps z_i)))(a_{\eps,i}\cdot+\eps z_i)\|^2_{W^{1-\frac12,2}(\partial B_1; \RR^3)},
		\end{align*}
	where $\eta$ is a cut-off function in $B_{ 2a_{\eps, i}}(\eps  z_i)\setminus T_i$ with $\eta=0$ on $\partial B_{2a_{\eps , i}}(\eps z_i)$ and $\eta=1$ on $\partial T_i$.
	Now, we use \cite[Theorem II.4.3]{Galdi2011}, $|\Grad\eta|\lesssim a_{\eps,i}^{-1}$, and a Lipschitz estimate to conclude
	\begin{align*}
		a_{\eps,i}\| (\eta(\pphi-\pphi(\eps z_i))) &(a_{\eps,i}\cdot +\eps z_i)\|_{W^{1-\frac{1}{2},2}(\partial B_1; \RR^3)}^2
		\lesssim  a_{\eps,i}\| (\eta(\pphi-\pphi(\eps z_i))) (a_{\eps,i}\cdot +\eps z_i)\|_{W^{1,2}(B_2\setminus B_1; \RR^3)}^2\\
		&\lesssim a_{\eps,i}^{-2}\| (\pphi-\pphi(\eps z_i))\|_{L^2(B_{2a_{\eps,i}}(\eps z_i)\setminus T_i; \RR^3)}^2 + \| \eta\Grad (\pphi-\pphi(\eps z_i)) \|_{L^2(B_{2a_{\eps,i}}(\eps z_i)\setminus T_i; \RR^{3 \times 3})}^2\\
		&\lesssim a_{\eps,i}^{-2}\| \pphi-\pphi(\eps z_i)\|_{L^2(B_{2a_{\eps,i}}(\eps z_i)\setminus T_i; \RR^3)}^2 + \| \Grad \pphi\|_{L^2(B_{2a_{\eps,i}}(\eps z_i)\setminus T_i; \RR^{3 \times 3})}^2\\
		&\lesssim a_{\eps,i}^{-2}\| \pphi\|_{C^1(\Omega; \RR^3)}^2\int_{B_{2a_{\eps,i}}(\eps z_i)\setminus T_i} |x-\eps z_i|^2\dx + a_{\eps,i}^d\| \pphi\|_{C^1(\Omega; \RR^3)}^2\\
		&\lesssim a_{\eps,i}^{-2}a_{\eps,i}^{5}\| \pphi\|_{C^1(\Omega; \RR^3)}^2+a_{\eps,i}^3 \| \pphi\|_{C^1(\Omega; \RR^3)}^2.
	\end{align*}
		Thus
		\begin{align*}
			\Big|\int_0^T\int_{\bigcup_{z_i \in n^\eps} C_i} &\Grad (\pphi^\eps_{\rm g}-\pphi- \overline{\pphi}_\eps): \Grad \vv_\eps\dx\dt\Big|
			\leq \| \Grad \vv_\eps\|_{L^2(0,T;L^2(\bigcup_{z_i\in n^\eps}C_i; \RR^{3 \times 3}))}\| \Grad (\pphi^\eps_{\rm g}-\pphi- \overline{\pphi}_\eps)\|_{L^2(\bigcup_{z_i\in n^\eps}C_i; \RR^{3 \times 3})}\\
			 &\lesssim \| \vv_\eps\|_{L^2(0,T;W^{1,2}(\Omega;\RR^3))}
			\left(\sum_{z_i\in n^\eps} \| \Grad (\pphi^\eps_{\rm g}-\pphi- \overline{\pphi}_\eps)\|_{L^2(C_i; \RR^{3 \times 3})}^2 \right)^{\frac{1}{2}}\\
			&\lesssim \| \vv_\eps\|_{L^2(0,T;W^{1,2}(\Omega;\RR^3))}
			\left(\sum_{z_i\in n^\eps} a_{\eps,i}^{3} \right)^{\frac{1}{2}}
			\lesssim \eps ^{1+2\delta}\| \vv_\eps\|_{L^2(0,T;W^{1,2}(\Omega;\RR^3))} 
			\left(\sum_{z_i\in n^\eps} a_{\eps,i}\right)^{\frac{1}{2}}.
		\end{align*}
	 From $\| \vv_\eps\|_{L^2(0,T;W^{1,2}(\Omega;\RR^3))} \lesssim 1$ and Lemma~\ref{SLLN}, the right-hand side in the above estimate vanishes almost surely for $\eps\to 0$ and so does the first term on the right-hand side of \eqref{konvTerm1}. In addition, the last term of \eqref{konvTerm1} yields
		\begin{align}\label{I2}
		\lim_{\eps\to 0} \int_0^T\int_{\bigcup_{z_i \in n^\eps} C_i} \Grad \overline{\pphi}_\eps : \Grad \vv_\eps \dx\dt= \int_0^T\int_\Omega \bM \pphi\cdot \vv\dx\dt  .
		\end{align}
		Indeed, by \ref{H5} we have
		\begin{align}\label{I3}
		\lim_{\eps\to 0} \sum_{k=1}^3 \int_0^T\int_{\Omega} \Grad \oomega_k^\eps : \Grad (\phi_k\vv_\eps)\dx\dt= \sum_{k=1}^3 \int_0^T\int_\Omega \mmu_k \cdot( \phi_k \vv)\dx\dt=\int_0^T\int_\Omega (\bM \pphi)\cdot \vv\dx\dt.
		\end{align}
		Hence
		\begin{equation}\label{I1}
			\begin{split}
		\int_0^T\int_{\bigcup_{z_i \in n^\eps} C_i} &\Grad \overline{\pphi}_\eps : \Grad \vv_\eps\dx\dt-\sum_{k=1}^3 \int_0^T\int_{\Omega} \Grad \oomega_k^\eps : \Grad (\phi_k\vv_\eps)\dx\dt\\
	&=\sum_{k=1}^3 \int_0^T\sum_{z_i\in n^\eps}\int_{C_i} (\phi_k(\eps z_i)-\phi_k)\Grad \oomega_k^\eps : \Grad \vv_\eps\dx\dt-\sum_{k=1}^3 \int_0^T\int_{\Omega} \Grad \oomega_k^\eps :(\vv_\eps \otimes\Grad\phi_k)\dx\dt\\
	& -\sum_{k=1}^3 \int_0^T\int_{\bigcup_{z_i \in n^\eps} D_i} \phi_k \Grad \oomega_k^\eps : \Grad \vv_\eps\dx\dt.
			\end{split}
		\end{equation}
		As $\pphi$ is smooth, we use a Lipschitz estimate, Lemma~\ref{AllaireFunk}, $\| \vv_\eps \|_{L^2(0,T;W^{1,2}(\Omega,\RR^3))}\lesssim 1$, and $d_{\eps, i} \leq \eps$ to conclude
		\begin{align*}
			&\Big|\sum_{k=1}^3 \int_0^T \sum_{z_i\in n^\eps}\int_{C_i} (\phi_k(\eps z_i)-\pphi)\Grad \oomega_k^\eps : \Grad \vv_\eps\dx\dt\Big| \\
            &\leq \sum_{k=1}^3 \int_0^T\sum_{z_i\in n^\eps}\int_{C_i} \|\phi_k(\eps z_i)-\pphi\|_{C^0(C_i; \RR^3)}|\Grad \oomega_k^\eps : \Grad \vv_\eps|\dx\dt\\
			&\lesssim d_{\eps,i} \sum_{k=1}^3 \int_0^T\int_{\bigcup_{z_i \in n^\eps} C_i} |\Grad \oomega_k^\eps : \Grad \vv_\eps|\dx\dt
			\lesssim d_{\eps,i} \sum_{k=1}^3 \| \Grad\oomega_k^\eps \|_{L^2(\Omega; \RR^{3 \times 3})}\| \vv_\eps
			\|_{L^2(0,T;W^{1,2}(\Omega;\RR^3))}
			\lesssim \eps  .
		\end{align*}
	Consequently, the first term on the right-hand side of \eqref{I1} vanishes as $\eps\to 0$.
		Since $\vv_\eps\rightharpoonup \vv$ weakly in $L^2(0,T; W^{1,2}(\Omega;\RR^3))$ and $\oomega_k^\eps \to \ee_k$ strongly in $L^2(\Omega; \RR^3)$ by \ref{H3}, together with $\div \vv = 0$, we have by partial integration
		\begin{align*}
	&\lim_{\eps\to 0} \sum_{k=1}^3 \int_0^T\int_{\Omega} \Grad \oomega_k^\eps : ( \vv_\eps\otimes\Grad \phi_k )\dx\dt \\
    &= - \lim_{\eps\to 0} \sum_{k=1}^3 \int_0^T\int_{\Omega} \div(\vv_\eps) (\oomega_k^\eps \cdot \Grad \phi_k) +\Delta_x \phi_k ((\oomega_k^\eps - \ee_k) \cdot \vv_\eps )  \dx\dt 
    = 0,
		\end{align*}
	and also the second term on the right-hand side of \eqref{I1} vanishes.
		Eventually, from Lemma \ref{AllaireFunk} and the fact that $\| \vv_\eps \|_{L^2(0,T;W^{1,2}(\Omega;\RR^3))}\lesssim 1$ we get
		\begin{align*}
			\Big| \sum_{k=1}^3 \int_0^T\int_{\bigcup_{z_i \in n^\eps} D_i} \phi_k \Grad \oomega_k^\eps : \Grad \vv_\eps\dx\dt \Big|\lesssim \| \phi_k\|_{C^0(\Omega; \RR^3)}\| \Grad\oomega_k^\eps\|_{L^2(\bigcup_{z_i \in n^\eps} A_i; \RR^{3 \times 3})}\| \vv_\eps\|_{L^2(0,T;W^{1,2}(\Omega;\RR^3))}\lesssim \eps^{\frac{\delta}{2}},
		\end{align*}
	hence also the last term of \eqref{I1} vanishes for $\eps\to 0$. This together with \eqref{I3} proves \eqref{I2}.
	\end{proof}
	
\section*{Acknowledgements}
{\it R. M. has received funding from the European Union's Horizon research and innovation program under the Marie Skłodowska-Curie agreement No 101150549. 
The research that led to the present paper was partially supported by
a grant from the GNAMPA group of INdAM.
F. O. has been supported by the Czech Science Foundation (GA\v CR) project 22-01591S, and the Czech Academy of Sciences project L100192351. The Institute of Mathematics, CAS is supported by RVO:67985840.
}

\bibliographystyle{plain}
\bibliography{ref}

\begin{thebibliography}{10}

\bibitem{AbbatielloFeireisl2020}
Anna Abbatiello and Eduard Feireisl.
\newblock On a class of generalized solutions to equations describing incompressible viscous fluids.
\newblock {\em Annali di Matematica Pura ed Applicata (1923-)}, 199(3):1183--1195, 2020.

\bibitem{Allaire1989PhD}
Gr{\'e}goire Allaire.
\newblock {\em Homog{\'e}n{\'e}isation des {\'e}quations de {S}tokes et de {N}avier-{S}tokes}.
\newblock PhD thesis, Paris 6, 1989.

\bibitem{Allaire1989}
Gr\'{e}goire Allaire.
\newblock Homogenization of the {S}tokes flow in a connected porous medium.
\newblock {\em Asymptotic Anal.}, 2(3):203--222, 1989.

\bibitem{Allaire1990a}
Gr\'{e}goire Allaire.
\newblock Homogenization of the {N}avier--{S}tokes equations in open sets perforated with tiny holes. {I}. {A}bstract framework, a volume distribution of holes.
\newblock {\em Arch. Rational Mech. Anal.}, 113(3):209--259, 1990.

\bibitem{Allaire1990b}
Gr\'{e}goire Allaire.
\newblock Homogenization of the {N}avier--{S}tokes equations in open sets perforated with tiny holes. {II}. {N}oncritical sizes of the holes for a volume distribution and a surface distribution of holes.
\newblock {\em Arch. Rational Mech. Anal.}, 113(3):261--298, 1990.

\bibitem{BMZ}
Annika Bach, Roberta Marziani, and Caterina~Ida Zeppieri.
\newblock {$\Gamma$}-convergence and stochastic homogenisation of singularly-perturbed elliptic functionals.
\newblock {\em Calc. Var. {\bf 62}, 199}, 2023.

\bibitem{BasaricChaudhuri2024}
Danica Basari{\'c} and Nilasis Chaudhuri.
\newblock {L}ow {M}ach number limit on perforated domains for the evolutionary {N}avier--{S}tokes--{F}ourier system.
\newblock {\em Nonlinearity}, 37(6):065008, 2024.

\bibitem{BasaricOschmannPan2025}
Danica Basari{\'c}, Florian Oschmann, and Jiaojiao Pan.
\newblock Qualitative derivation of a density dependent incompressible {D}arcy law.
\newblock {\em arXiv preprint arXiv:2502.14602}, 2025.

\bibitem{BellaFeireislOschmann2023b}
Peter Bella, Eduard Feireisl, and Florian Oschmann.
\newblock {$\Gamma$}--convergence for nearly incompressible fluids.
\newblock {\em Journal of Mathematical Physics}, 64(9), 2023.

\bibitem{BellaOschmann2022}
Peter Bella and Florian Oschmann.
\newblock Homogenization and low {M}ach number limit of compressible {N}avier-{S}tokes equations in critically perforated domains.
\newblock {\em Journal of Mathematical Fluid Mechanics}, 24(3):1--11, 2022.

\bibitem{BellaOschmann2023}
Peter Bella and Florian Oschmann.
\newblock Inverse of divergence and homogenization of compressible {N}avier--{S}tokes equations in randomly perforated domains.
\newblock {\em Arch. Ration. Mech. Anal.}, 247(2):14, 2023.

\bibitem{CioranescuMurat1982}
Do{\"{\i}}na Cioranescu and Fran\c{c}ois Murat.
\newblock Un terme \'{e}trange venu d'ailleurs. {I}.
\newblock In {\em Nonlinear partial differential equations and their applications. {C}oll\`ege de {F}rance {S}eminar, {V}ol. {III}}, volume~70 of {\em Res. Notes in Math.}, pages 154--178, 425--426. Pitman, Boston, Mass.-London, 1982.

\bibitem{Darcy1856}
Henry Darcy.
\newblock {\em Les fontaines publiques de la ville de {D}ijon}.
\newblock Victor Dalmont, 1856.

\bibitem{deAnnaSchlomerkemperZarnescu2025}
Francesco De~Anna, Anja Schl{\"o}merkemper, and Arghir~Dani Zarnescu.
\newblock Colloidal homogenization for the hydrodynamics of nematic liquid crystals.
\newblock In {\em Proceedings A}, volume 481, page 20240192. The Royal Society, 2025.

\bibitem{DieningFeireislLu2017}
Lars Diening, Eduard Feireisl, and Yong Lu.
\newblock The inverse of the divergence operator on perforated domains with applications to homogenization problems for the compressible {N}avier--{S}tokes system.
\newblock {\em ESAIM: Control, Optimisation and Calculus of Variations}, 23(3):851--868, 2017.

\bibitem{FeireislLu2015}
Eduard Feireisl and Yong Lu.
\newblock Homogenization of stationary {N}avier--{S}tokes equations in domains with tiny holes.
\newblock {\em Journal of Mathematical Fluid Mechanics}, 17(2):381--392, 2015.

\bibitem{FeireislNamlyeyevaNecasova2016}
Eduard Feireisl, Yuliya Namlyeyeva, and \v{S}\'{a}rka Ne\v{c}asov\'{a}.
\newblock Homogenization of the evolutionary {N}avier--{S}tokes system.
\newblock {\em Manuscripta Math.}, 149(1-2):251--274, 2016.

\bibitem{FeireislNovotny2009singlim}
Eduard Feireisl and Anton{\'\i}n Novotn{\'y}.
\newblock {\em Singular limits in thermodynamics of viscous fluids}, volume~2.
\newblock Springer, 2009.

\bibitem{Galdi2011}
Giovanni~Paolo Galdi.
\newblock {\em An introduction to the mathematical theory of the {N}avier--{S}tokes equations}.
\newblock Springer Monographs in Mathematics. Springer, New York, second edition, 2011.
\newblock Steady-state problems.

\bibitem{GiuntiHoefer2019}
Arianna Giunti and Richard~Matthias H{\"o}fer.
\newblock Homogenisation for the {S}tokes equations in randomly perforated domains under almost minimal assumptions on the size of the holes.
\newblock {\em Ann. Inst. H. Poincar\'{e} Anal. Non Lin\'{e}aire}, 36(7):1829--1868, 2019.

\bibitem{GiuntiHoeferVelazquez2018}
Arianna Giunti, Richard~Matthias H{\"o}fer, and Juan J.~L. Vel\'{a}zquez.
\newblock Homogenization for the {P}oisson equation in randomly perforated domains under minimal assumptions on the size of the holes.
\newblock {\em Comm. Partial Differential Equations}, 43(9):1377--1412, 2018.

\bibitem{HoeferKowalczykSchwarzacher2021}
Richard~Matthias H{\"o}fer, Karina Kowalczyk, and Sebastian Schwarzacher.
\newblock Darcy’s law as low {M}ach and homogenization limit of a compressible fluid in perforated domains.
\newblock {\em Mathematical Models and Methods in Applied Sciences}, 31(09):1787--1819, 2021.

\bibitem{HoeferNecasovaOschmann2024}
Richard~Matthias H{\"o}fer, {\v{S}}{\'a}rka Ne{\v{c}}asov{\'a}, and Florian Oschmann.
\newblock Quantitative homogenization of the compressible {N}avier-{S}tokes equations towards {D}arcy's law.
\newblock {\em arXiv preprint arXiv:2403.12616}, 2024.

\bibitem{KhrabustovskyiPlum2022}
Andrii Khrabustovskyi and Michael Plum.
\newblock Operator estimates for homogenization of the {R}obin {L}aplacian in a perforated domain.
\newblock {\em Journal of Differential Equations}, 338:474--517, 2022.

\bibitem{Last2017}
G{\"u}nter Last and Mathew Penrose.
\newblock {\em Lectures on the {P}oisson process}, volume~7.
\newblock Cambridge University Press, 2017.

\bibitem{LuOschmann2024}
Yong Lu and Florian Oschmann.
\newblock Qualitative/quantitative homogenization of some non-{N}ewtonian flows in perforated domains.
\newblock {\em arXiv preprint arXiv:2406.17406}, 2024.

\bibitem{LuPanYang2025}
Yong Lu, Jiaojiao Pan, and Peikang Yang.
\newblock {H}omogenization of {I}nhomogeneous {I}ncompressible {N}avier-{S}tokes {E}quations in {D}omains with {V}ery {T}iny {H}oles.
\newblock {\em arXiv preprint arXiv:2501.05734}, 2025.

\bibitem{LuPokorny2021}
Yong Lu and Milan Pokorn\'{y}.
\newblock Homogenization of stationary {N}avier--{S}tokes--{F}ourier system in domains with tiny holes.
\newblock {\em J. Differential Equations}, 278:463--492, 2021.

\bibitem{LuQian2024}
Yong Lu and Zhengmao Qian.
\newblock Homogenization of some evolutionary non-{N}ewtonian flows in porous media.
\newblock {\em Journal of Differential Equations}, 411:619--639, 2024.

\bibitem{LuSchwarzacher2018}
Yong Lu and Sebastian Schwarzacher.
\newblock Homogenization of the compressible {N}avier–-{S}tokes equations in domains with very tiny holes.
\newblock {\em Journal of Differential Equations}, 265(4):1371 -- 1406, 2018.

\bibitem{Maremonti1999}
Paolo Maremonti, Remigio Russo, and Giulio Starita.
\newblock On the {S}tokes equations: the boundary value problem.
\newblock In {\em Advances in fluid dynamics}, volume~4, pages 69--140. Aracne, 1999.

\bibitem{Marziani}
Roberta Marziani.
\newblock {$\Gamma$}-convergence and stochastic homogenization of phase-transition functionals.
\newblock {\em ESAIM: Control Optim. Calc. Var. 29 {\bf 44}}, 2023.

\bibitem{Marziani-Solombrino}
Roberta Marziani and Francesco Solombrino.
\newblock Non-local approximation of free-discontinuity problems in linear elasticity and application to stochastic homogenization.
\newblock {\em Proceedings of the Royal Society Edinburgh, 1-35}, 2023.

\bibitem{Masmoudi2002}
Nader Masmoudi.
\newblock Homogenization of the compressible {N}avier--{S}tokes equations in a porous medium.
\newblock {\em ESAIM: {C}ontrol, {O}ptimisation and {C}alculus of {V}ariations}, 8:885--906, 2002.

\bibitem{Mikelic1991}
Andro Mikeli\'{c}.
\newblock Homogenization of nonstationary {N}avier--{S}tokes equations in a domain with a grained boundary.
\newblock {\em Ann. Mat. Pura Appl. (4)}, 158:167--179, 1991.

\bibitem{NecasovaPan2022}
{\v{S}}{\'a}rka Ne{\v{c}}asov{\'a} and Jiaojiao Pan.
\newblock Homogenization problems for the compressible {N}avier--{S}tokes system in 2{D} perforated domains.
\newblock {\em Mathematical Methods in the Applied Sciences}, 45(12):7859--7873, 2022.

\bibitem{Oschmann2022}
Florian Oschmann.
\newblock Homogenization of the full compressible {N}avier-{S}tokes-{F}ourier system in randomly perforated domains.
\newblock {\em Journal of Mathematical Fluid Mechanics}, 24(2):1--20, 2022.

\bibitem{OschmannPokorny2023}
Florian Oschmann and Milan Pokorn{\'y}.
\newblock Homogenization of the unsteady compressible {N}avier-{S}tokes equations for adiabatic exponent $\gamma> 3$.
\newblock {\em Journal of Differential Equations}, 377:271--296, 2023.

\bibitem{Pan2025}
Jiaojiao Pan.
\newblock Homogenization of {N}on-{H}omogeneous {I}ncompressible {N}avier--{S}tokes {S}ystem in {C}ritically {P}erforated {D}omains.
\newblock {\em Journal of Mathematical Fluid Mechanics}, 27(2):1--16, 2025.

\bibitem{PokornySkrisovsky2021}
Milan Pokorn{\'y} and Emil Sk{\v{r}}{\'\i}{\v{s}}ovsk{\'y}.
\newblock Homogenization of the evolutionary compressible {N}avier--{S}tokes--{F}ourier system in domains with tiny holes.
\newblock {\em Journal of Elliptic and Parabolic Equations}, pages 1--31, 2021.

\bibitem{Scardia-Zemas-Zeppieri}
Lucia Scardia, Konstantinos Zemas, and Caterina~Ida Zeppieri.
\newblock Homogenization of nonlinear {D}irichlet problems in randomly perforated domains under minimal assumptions on the size of the perforations.
\newblock {\em Probab. Theory Relat. Fields}, 2024.

\bibitem{Tartar1980}
Luc Tartar.
\newblock Incompressible fluid flow in a porous medium: convergence of the homogenization process.
\newblock {\em Appendix of Non-homogeneous media and vibration theory}, pages 368--377, 1980.

\bibitem{WiedemannPeter2025}
David Wiedemann and Malte~A. Peter.
\newblock A {D}arcy law with memory by homogenisation for evolving microstructure.
\newblock {\em Journal of Mathematical Analysis and Applications}, 546(2):129222, 2025.

\end{thebibliography}

\end{document}